\newtheorem{theorem}{Theorem}[section]
\newtheorem{definition}[theorem]{Definition}
\newtheorem{lemma}[theorem]{Lemma}
\newtheorem{proposition}[theorem]{Proposition}
\newtheorem{corollary}[theorem]{Corollary}
\newtheorem{remark}[theorem]{Remark}
\author{Dieter Degrijse}
\address{Department of Mathematics, Catholic University of Leuven, Kortrijk, Belgium}%
\email{Dieter.Degrijse@kuleuven-kortrijk.be}%
\author{Nansen Petrosyan}
\address{Department of Mathematics, Catholic University of Leuven, Kortrijk, Belgium}%
\email{Nansen.Petrosyan@kuleuven-kortrijk.be}%
\title[Char. classes for split Hopf alg. ext.]{Characteristic classes for cohomology of split Hopf algebra extensions}
\thanks{The first author was supported by the Research Fund K.U.Leuven.\\
\indent The second author was supported by the FWO-Flanders Research Fellowship.}%
\subjclass[2000]{Primary 18G60; Secondary 18G40, 17B56, 20J06.}%
\begin{document}
\maketitle

\vspace{-1mm}

\begin{abstract}
We introduce characteristic classes for the spectral sequence associated to a split short exact sequence of Hopf algebras. These  classes can be seen as obstructions for the vanishing of differentials in the spectral sequence. We give a decomposition theorem and interpret our results in the settings of group and Lie algebra extensions. As applications, we derive several results concerning the collapse of the (Lyndon-)Hochschild-Serre spectral sequence and the order of characteristic classes.
\end{abstract}
\section{Introduction}
Suppose $H$ is a group and $L$ is an $n$-dimensional integral lattice.
In \cite{CharlapVasquez}, L. Charlap and A. Vasquez defined characteristic classes for the second page of the Lyndon-Hochschild-Serre spectral sequence associated to the extension
\begin{equation}\label{eq: intro1}0 \rightarrow L \rightarrow L \rtimes H \rightarrow  H \rightarrow 0
\end{equation}
and showed that these classes can be seen as obstructions for the vanishing of differentials on the second page of the spectral sequence. In \cite{Sah}, C.-H. Sah generalized their results by defining characteristic classes with the same properties on every page of the spectral sequence. In this paper, we aim to expand this theory to split short exact sequences of Hopf algebras. The reason we use Hopf algebras and not just algebras in general is because the existence of a cup product on cohomology is vital to the theory. We now give a brief outline of the paper.

In Section \ref{sec: pre}, we introduce some notation and state a few preliminary definitions and results concerning semi-direct products of Hopf algebras. This section is largely based on R. Molnar's paper \cite{Molnar}. We merely include  it here for the reader's convenience. In Section \ref{sec: cohom}, we recall some basic facts concerning the cohomology of Hopf algebras and introduce the spectral sequence we will be studying. Then, in Section \ref{sec: char classes}, we define characteristic classes for the spectral sequence associated to a split short exact sequence of Hopf algebras.
\begin{definition} \rm
Let $t\geq 0, r\geq 2$. Suppose we have a split short exact sequence of Hopf algebras
$A \rightarrow B \rightarrow C,$ and let $(E_{\ast}(M),d_{\ast})$ be the spectral sequence associated to this extension. We define $\mathscr{M}^t_r(A,B)$ to be the subclass in $C\mbox{-mod}=\{ M \in B\mbox{-mod} \ | \ M^A=M \}$ consisting of the $B$-modules $M$ for which the differentials $d_p^{s,t}$, with source $E^{s,t}_{p}(M)$, are zero for all $s$ and all $2\leq p \leq r-1$. We say (\ref{eq: intro1}) is \emph{$(t,r)$-trivial} if $\mathrm{H}_{t}(A,k)$ is in $\mathscr{M}^t_r(A,B)$.

Assuming the split short exact sequence is $(t,r)$-trivial,
a \emph{characteristic class $v_r^t$} of the spectral sequence $(E_{\ast}(\mathrm{H}_t(A,k)),d_{\ast})$ is defined as $d^{0,t}_r([id^t])$
where $[id^t]$ is the class in $E^{0,t}_r(\mathrm{H}_t(A,k))\cong \mathrm{Hom}_C(\mathrm{H}_t(A,k),\mathrm{H}_t(A,k))$ corresponding to the identity map under the isomorphism.
\end{definition}
Also, in Section \ref{sec: char classes}, we give a generalization of a theorem of Sah (see Theorem 3 of \cite{Sah}) which shows that characteristic classes can be seen as obstructions for the vanishing of differentials in the spectral sequence.
\begin{theorem} \label{th: intro} Let $t\geq 0, r\geq 2$ and suppose we have a $(t,r)$-trivial split short exact sequence of Hopf algebras
$A \rightarrow B \rightarrow C$.
Then the following holds.
\begin{itemize}
\item[(a)] For all $s\geq0$ and for all $M \in C\mbox{-mod} $, there is a canonical surjective homomorphism
\[ \theta: E_r^{s,0}(\mathrm{H}^t(A,M)) \rightarrow E_r^{s,t}(M).\]
\item[(b)] The characteristic class $v_r^t \in E_r^{r,t-r+1}(\mathrm{H}_t(A,k))$  has the property
\[ d_r^{s,t}(x)= (-1)^s y \cdot v_r^t \] $\forall s \geq 0,  \forall x \in E_r^{s,t}(M),\forall M \in C\mbox{-mod} \ \mbox{and} \ \forall y \in  E_r^{s,0}(\mathrm{H}^t(A,M)) \ \mbox{with} \ \theta(y)=x.$
\item[(c)] $v_r^t$ is completely determined by the previous property.
\item[]
\item[(d)] Suppose we have a Hopf algebra $D$ together with a Hopf algebra map $\rho: D \rightarrow C$ that turns $A$ into a $D$-module bialgebra.  Assume that $A \rtimes D$, given the standard coalgebra structure, is a Hopf algebra and that the split short exact sequence associated to $A \rtimes D$ is also $(t,r)$-trivial (denote its characteristic classes by $w_r^t)$. Then $v_r^t$ maps to $w_r^t$ under the map induced by $\rho$ on the spectral sequences. In particular,  $v_2^t$ maps to $w_2^t$.
\end{itemize}
\end{theorem}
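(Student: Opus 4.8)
The plan is to adapt Sah's proof of Theorem~3 of \cite{Sah}, now exploiting the multiplicative structure on the spectral sequence that is available because $B$ is a Hopf algebra: the $E_r$-differentials are derivations for the cup product $\cup$, cup products are compatible with coefficient pairings, the pairing $y\cdot v_r^t$ in the statement is $\mathrm{ev}_*(y\cup v_r^t)$ for the evaluation map on coefficients, and (crucially) the identification $\mathrm{H}^t(A,N)\cong\mathrm{Hom}_k(\mathrm{H}_t(A,k),N)$, natural in the $C$-module $N$, which already underlies the definition of $[\mathrm{id}^t]$, together with $\mathrm{H}^0(A,N)=N$. Parts (a)--(c) I would prove together; (d) is a separate naturality check.

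For (a) I would \emph{define} $\theta$ as cup product with the universal class $[\mathrm{id}^t]$ followed by the coefficient map. Concretely, for every $C$-module $M$ and every $2\le p\le r$ there is a bigraded homomorphism
\[\theta_p\colon E_p^{s,u}(\mathrm{H}^t(A,M))\longrightarrow E_p^{s,u+t}(M),\qquad \theta_p(z)=\mathrm{ev}_*\big(z\cup[\mathrm{id}^t]_p\big),\]
where $[\mathrm{id}^t]_p\in E_p^{0,t}(\mathrm{H}_t(A,k))=E_2^{0,t}(\mathrm{H}_t(A,k))$ is the identity class (this spot stabilises at $E_2$ by $(t,r)$-triviality) and $\mathrm{ev}\colon \mathrm{H}^t(A,M)\otimes\mathrm{H}_t(A,k)=\mathrm{Hom}_k(\mathrm{H}_t(A,k),M)\otimes\mathrm{H}_t(A,k)\to M$ is evaluation. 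Since $d_p[\mathrm{id}^t]_p=d_p^{0,t}[\mathrm{id}^t]_p=0$ for $p\le r-1$ (exactly the hypothesis $\mathrm{H}_t(A,k)\in\mathscr{M}^t_{r}(A,B)$, specialised to the spot $(0,t)$), the derivation property of $d_p$ and the fact that $d_p$ commutes with $\mathrm{ev}_*$ show that $\theta_p$ commutes with $d_p$; hence $\theta_p$ induces $\theta_{p+1}$ on the next page, and iterating from $p=2$ gives $\theta:=\theta_r$, whose restriction to $u=0$ is the map in the statement. Unwinding the cup product together with $\mathrm{H}^0(A,-)=-$ and $\mathrm{H}^t(A,-)=\mathrm{Hom}_k(\mathrm{H}_t(A,k),-)$ shows that on the bottom row $\theta_2$ is the identity $\mathrm{H}^s(C,\mathrm{H}^t(A,M))\to\mathrm{H}^s(C,\mathrm{H}^t(A,M))$; an induction on pages, lifting classes through the quotient description of $E_{p+1}^{s,0}(\mathrm{H}^t(A,M))$ and the subquotient description of $E_{p+1}^{s,t}(M)$, then gives that $\theta_r$ is surjective on the bottom row. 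This is the heart of the matter and the main obstacle: getting $\theta$ to be a genuine morphism of spectral sequences through page $r$ and identifying $\theta_2$; it is routine once the multiplicative structure is in hand, but it is where all the care goes, in particular in checking that the several natural identifications are compatible with the cup product.

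For (b), let $y\in E_r^{s,0}(\mathrm{H}^t(A,M))$ and $x=\theta(y)=\mathrm{ev}_*(y\cup[\mathrm{id}^t]_r)$. Since $d_r$ commutes with $\mathrm{ev}_*$ and is a derivation,
\[d_r^{s,t}(x)=\mathrm{ev}_*\big(d_r^{s,0}(y)\cup[\mathrm{id}^t]_r+(-1)^s\,y\cup d_r^{0,t}([\mathrm{id}^t]_r)\big).\]
The first term vanishes because $d_r^{s,0}(y)\in E_r^{s+r,1-r}=0$, and $d_r^{0,t}([\mathrm{id}^t]_r)=v_r^t$ by definition, so $d_r^{s,t}(x)=(-1)^s\,\mathrm{ev}_*(y\cup v_r^t)=(-1)^s\,y\cdot v_r^t$, which is (b). For (c), specialise (b) to $M=\mathrm{H}_t(A,k)$, $s=0$ and $y=\mathrm{id}\in E_r^{0,0}(\mathrm{H}^t(A,\mathrm{H}_t(A,k)))=\mathrm{Hom}_C(\mathrm{H}_t(A,k),\mathrm{H}_t(A,k))$; the same unwinding as for $\theta_2$ gives $\theta(\mathrm{id})=[\mathrm{id}^t]$, so any $v'$ with the property of (b) satisfies $v'=\mathrm{id}\cdot v'=d_r^{0,t}([\mathrm{id}^t])=v_r^t$.

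For (d), the pair $(\mathrm{id}_A,\rho)$ defines a Hopf algebra map $A\rtimes D\to A\rtimes C= B$ (the target is a semidirect product because $A\to B\to C$ is split, and $A\rtimes D$ carries the standard coalgebra structure), hence a morphism of split short exact sequences of Hopf algebras and therefore, by functoriality of the associated spectral sequence, morphisms $\rho^*\colon E_*(M)_B\to E_*(M)_{A\rtimes D}$ for all coefficient modules. Since $\rho^*$ commutes with the differentials and with the page maps, and on $\mathrm{Hom}_C(\mathrm{H}_t(A,k),\mathrm{H}_t(A,k))\to\mathrm{Hom}_D(\mathrm{H}_t(A,k),\mathrm{H}_t(A,k))$ it is restriction of scalars and so fixes the identity, it carries $[\mathrm{id}^t]$ for the $B$-sequence to $[\mathrm{id}^t]$ for the $A\rtimes D$-sequence; applying $d_r^{0,t}$ gives $\rho^*(v_r^t)=w_r^t$, and $r=2$ is the last assertion. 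The remaining obstacles here are bookkeeping: that $A\rtimes D\to B$ is a map of Hopf algebras and that the spectral sequence is functorial for morphisms of split extensions, both in the spirit of the preliminary material of Sections~\ref{sec: pre}--\ref{sec: cohom}.
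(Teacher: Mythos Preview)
Your proposal is correct and follows essentially the same approach as the paper. The only organizational difference is that you \emph{define} $\theta$ as pairing with $[\mathrm{id}^t]$ and then argue surjectivity page by page, whereas the paper defines $\theta$ directly as the composite $E_r^{s,0}(\mathrm{H}^t(A,M))=E_2^{s,0}(\mathrm{H}^t(A,M))\cong E_2^{s,t}(M)\twoheadrightarrow E_r^{s,t}(M)$ (using the splitting for the first equality and $M\in\mathscr{M}_r^t$ for the last surjection) and then checks in the course of proving (b) that $y\cdot[\mathrm{id}^t]=\theta(y)$; the computations in (b), (c), (d) are identical to yours.
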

In Section \ref{sec: decomp}, we consider a split short exact sequence of Hopf algebras whose kernel $A$ decomposes into a tensor product of Hopf algebras $A_1 \otimes A_2$ such that the action of the quotient factors over this tensor product. For this type of extensions we prove the following decomposition theorem, which is a generalization of Petrosyan's work in \cite{Petrosyan}.
\begin{theorem}\label{intro: decomp} Let $t\geq 0$ and $r\geq 2$.
Suppose the characteristic classes ${}^{1}v_p^i$ and ${}^{2}v_p^j$, of ${}^{1}E_{\ast}(\mathrm{H}_i(A_1,k))$ and  ${}^{2}E_{\ast}(\mathrm{H}_j(A_2,k))$ respectively, are zero for all $i,j\leq t$ and $2\leq p\leq r-1$. Then the split short exact sequence
$A \rightarrow A \rtimes C \rightarrow C$
is $(t,r)$-trivial. Furthermore, we have a decomposition formula
\[ v_r^t=\sum_{i+j=m} \Big( P_{i,j}({}^{1}v_r^i\otimes [{}^{2}id^j])+ (-1)^i P_{i,j}([{}^{1}id^i]\otimes {}^{2}v_r^j)\Big). \]
\end{theorem}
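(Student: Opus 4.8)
The plan is to equip the three spectral sequences with a compatible external product $P_{i,j}$, to express the identity class $[id^t]$ of the big extension as a sum of such products of the identity classes of the two factors, and then to obtain both assertions by an induction on the page, feeding in Theorem~\ref{th: intro} together with the vanishing hypotheses. The main structural input is that, because the $C$-action on $A$ factors through $A=A_1\otimes A_2$, the comultiplication $\Delta\colon C\to C\otimes C$ induces a morphism of split extensions from $A\rightarrow A\rtimes C\rightarrow C$ to $A_1\otimes A_2\rightarrow(A_1\rtimes C)\otimes(A_2\rtimes C)\rightarrow C\otimes C$ that is the identity on the kernel; restricting cohomology along this morphism, and identifying — via the Künneth theorem — the spectral sequence of the tensor-product extension with coefficients $M_1\otimes M_2$ with ${}^{1}E_{\ast}(M_1)\otimes {}^{2}E_{\ast}(M_2)$, produces for all $i,j\ge 0$ and suitable coefficients $M_1,M_2$ a morphism of spectral sequences
\[ P_{i,j}\colon {}^{1}E_{\ast}(M_1)\otimes {}^{2}E_{\ast}(M_2)\longrightarrow E_{\ast}(M_1\otimes M_2), \]
where $M_1\otimes M_2$ carries the diagonal $C$-action. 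Being induced by a morphism of filtered complexes, $P_{i,j}$ commutes with all differentials, and — writing the differential on the tensor-product spectral sequence with its Koszul sign — one obtains $d_p\big(P_{i,j}(x\otimes y)\big)=P_{i,j}(d_px\otimes y)+(-1)^{\deg x}P_{i,j}(x\otimes d_py)$, with $\deg x$ the total degree. Arranging $P_{i,j}$ so that it genuinely respects the multiplicative and spectral-sequence structures and so that these Koszul signs come out correctly is the step I expect to be the main obstacle; granting it, the rest of the argument is formal.

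Next, using the Künneth isomorphisms $\mathrm{H}_{t}(A,k)\cong\bigoplus_{i+j=t}\mathrm{H}_{i}(A_1,k)\otimes\mathrm{H}_{j}(A_2,k)$ and $\mathrm{H}^{t}(A,k)\cong\bigoplus_{i+j=t}\mathrm{H}^{i}(A_1,k)\otimes\mathrm{H}^{j}(A_2,k)$, I would check that under the isomorphism $E_2^{0,t}(\mathrm{H}_t(A,k))\cong\mathrm{Hom}_C(\mathrm{H}_t(A,k),\mathrm{H}_t(A,k))$ the identity class decomposes as
\[ [id^t]=\sum_{i+j=t}P_{i,j}\big([{}^{1}id^{i}]\otimes[{}^{2}id^{j}]\big); \]
this holds because the pairing $\mathrm{H}^{t}(A,k)\otimes\mathrm{H}_{t}(A,k)\to k$ is the tensor product of the corresponding pairings for $A_1$ and $A_2$, hence block-diagonal in the bigrading by $(i,j)$. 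Since by hypothesis the classes ${}^{1}v_p^{i}$ and ${}^{2}v_p^{j}$ are defined (so ${}^{1}E$ and ${}^{2}E$ are $(i,r)$- and $(j,r)$-trivial) and vanish for $i,j\le t$ and $2\le p\le r-1$, the classes $[{}^{1}id^{i}]$ and $[{}^{2}id^{j}]$ survive to the $r$-th page; granting the induction below that $[id^t]$ does as well, the displayed identity then holds on $E_p$ for every $2\le p\le r$.

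Finally, I would induct on $p=2,\dots,r$ with the statement that $A\rightarrow A\rtimes C\rightarrow C$ is $(t,p)$-trivial, the case $p=2$ being vacuous. Assuming $(t,p)$-triviality with $p\le r-1$, the class $v_p^t=d_p^{0,t}([id^t])$ is defined, and applying $d_p$ to the decomposition of $[id^t]$ — using that $P_{i,j}$ is a spectral sequence morphism, the Leibniz rule, and $d_p[{}^{1}id^{i}]={}^{1}v_p^{i}$, $d_p[{}^{2}id^{j}]={}^{2}v_p^{j}$ — gives
\[ v_p^t=\sum_{i+j=t}\Big(P_{i,j}({}^{1}v_p^{i}\otimes[{}^{2}id^{j}])+(-1)^{i}P_{i,j}([{}^{1}id^{i}]\otimes {}^{2}v_p^{j})\Big), \]
which is $0$ for $p\le r-1$ by the vanishing hypothesis. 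By Theorem~\ref{th: intro}(a)--(b), applied to the $(t,p)$-trivial sequence, $d_p^{s,t}(x)=(-1)^s\,y\cdot v_p^t=0$ for every $M\in C\mbox{-mod}$ and every $x\in E_p^{s,t}(M)$ (writing $x=\theta(y)$ by surjectivity of $\theta$), in particular for $M=\mathrm{H}_t(A,k)$, so the sequence is $(t,p+1)$-trivial. Iterating up to $p=r$ yields $(t,r)$-triviality; and the same formula read off on the $r$-th page — where ${}^{1}v_r^{i}$ and ${}^{2}v_r^{j}$ are now defined but no longer assumed to vanish — is precisely the asserted decomposition $v_r^t=\sum_{i+j=t}\big(P_{i,j}({}^{1}v_r^{i}\otimes[{}^{2}id^{j}])+(-1)^{i}P_{i,j}([{}^{1}id^{i}]\otimes {}^{2}v_r^{j})\big)$ (so $m=t$ in the statement).
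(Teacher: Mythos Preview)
Your proposal is correct and follows essentially the same inductive scheme as the paper: decompose $[id^t]=\sum_{i+j=t}P_{i,j}([{}^{1}id^i]\otimes[{}^{2}id^j])$, apply the Leibniz rule for $P_{i,j}$, and use Theorem~\ref{th: intro} to pass from $(t,p)$- to $(t,p+1)$-triviality. The only difference is in how $P_{i,j}$ is built: the paper sets $P_{i,j}(x\otimes y)={}^{1}\Phi(x)\cdot{}^{2}\Phi(y)$, where ${}^{l}\Phi\colon {}^{l}E_{\ast}\to E_{\ast}$ is induced by the projection $A\twoheadrightarrow A_l$ and $\cdot$ is the cup-product pairing on $E_{\ast}$ coming from the K\"unneth coefficient map $\mathrm{H}_i(A_1,k)\otimes\mathrm{H}_j(A_2,k)\to\mathrm{H}_{i+j}(A,k)$, so the Leibniz rule (Lemma~\ref{lemma: product formula}) drops out of the standard product formula without having to identify the spectral sequence of the tensor-product extension via K\"unneth.
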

\noindent Here, $P_{i,j}: {}^{1}E_r(\mathrm{H}_i(A_1,k)) \otimes_k {}^{2}E_r(\mathrm{H}_j(A_2,k)) \rightarrow E_r(\mathrm{H}_{i+j}(A,k))$ is a morphism induced by a spectral sequence pairing and $[{}^{l}id^s]$ is the element in ${}^{l}E^{0,s}_r(\mathrm{H}_s(A_l,k))$ corresponding to the identity map in $\mathrm{Hom}_C(\mathrm{H}_s(A_l,k),\mathrm{H}_s(A_l,k))$.  The following is a useful corollary.
\begin{corollary}\label{cor:intro} Suppose the spectral sequences associated to $A_1 \rtimes  C$ and  $A_2 \rtimes  C$ collapse at the second page, in coefficients $\mathrm{H}_t(A_1,k)$ and $\mathrm{H}_t(A_2,k)$ respectively, for each $t\geq 0$. Then the spectral sequence associated to $A \rtimes C$ will collapse at the second page, for all coefficients $M$ for which $M^A=M$.
\end{corollary}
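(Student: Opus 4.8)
The plan is to reduce the assertion to the vanishing of all characteristic classes of the spectral sequence associated to $A\rtimes C$, and then to invoke Theorem \ref{th: intro} in order to upgrade this from the coefficient modules $\mathrm{H}_t(A,k)$ to an arbitrary $M$ with $M^A=M$. First I would unpack the hypothesis: saying that the spectral sequence associated to $A_l\rtimes C$ collapses at the second page in coefficients $\mathrm{H}_i(A_l,k)$ means precisely that every differential $d_p^{s,i}$ with $p\geq 2$ in that spectral sequence vanishes. In particular all the relevant $(i,p)$-triviality conditions are satisfied, so every characteristic class ${}^{l}v_p^i=d_p^{0,i}([{}^{l}id^i])$ is defined and equal to zero, for all $i\geq 0$ and all $p\geq 2$.

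Next I would feed this into Theorem \ref{intro: decomp}. For any fixed $t\geq 0$ and $r\geq 2$, the hypotheses of that theorem hold because ${}^{1}v_p^i=0$ and ${}^{2}v_p^j=0$ for all $i,j$ and all $p$, in particular for $i,j\leq t$ and $2\leq p\leq r-1$. Hence the split short exact sequence $A\rightarrow A\rtimes C\rightarrow C$ is $(t,r)$-trivial, and the decomposition formula reads
\[ v_r^t=\sum_{i+j=t} \Big( P_{i,j}({}^{1}v_r^i\otimes [{}^{2}id^j])+ (-1)^i P_{i,j}([{}^{1}id^i]\otimes {}^{2}v_r^j)\Big)=0, \]
since every summand contains one of the vanishing classes ${}^{1}v_r^i$ or ${}^{2}v_r^j$. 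Thus, for every $t\geq 0$ and every $r\geq 2$, the sequence $A\rightarrow A\rtimes C\rightarrow C$ is $(t,r)$-trivial with $v_r^t=0$.

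Finally I would apply Theorem \ref{th: intro}. Let $M\in C\mbox{-mod}$, that is, an $A\rtimes C$-module with $M^A=M$, and fix $r\geq 2$, $s\geq 0$, $t\geq 0$. By part (a) the map $\theta : E_r^{s,0}(\mathrm{H}^t(A,M))\rightarrow E_r^{s,t}(M)$ is surjective, so every $x\in E_r^{s,t}(M)$ equals $\theta(y)$ for some $y$; by part (b) we then get $d_r^{s,t}(x)=(-1)^s\, y\cdot v_r^t=0$. Since $r,s,t$ and $M$ were arbitrary, every differential on every page $E_r$ with $r\geq 2$ of the spectral sequence associated to $A\rtimes C$ vanishes, which is exactly the claimed collapse at the second page for all $M$ with $M^A=M$.

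The argument involves no genuine computation; the only point requiring attention is the bookkeeping of triviality hypotheses, so that each characteristic class referred to is actually defined — for the factor sequences this is immediate from the collapse assumption, and for $A\rtimes C$ it is supplied by Theorem \ref{intro: decomp}. The substantive input is part (b) of Theorem \ref{th: intro}, which is precisely what allows one to pass from the distinguished coefficient modules $\mathrm{H}_t(A,k)$ to a general coefficient module $M$.
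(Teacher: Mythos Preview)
Your proposal is correct and follows exactly the approach the paper intends: in the body of the paper the corollary is simply declared ``immediate'' from Theorem~\ref{th: decomp}, and you have faithfully spelled out the two implicit steps --- namely, that the collapse hypothesis forces all ${}^{l}v_p^i=0$, whence the decomposition formula gives $v_r^t=0$ for all $t,r$, and then Theorem~\ref{th: sah}(b) (equivalently Theorem~\ref{th: intro}(b)) converts this into the vanishing of every $d_r^{s,t}$ on arbitrary $M\in C\mbox{-mod}$.
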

In Sections \ref{sec: lie algebra} and \ref{sec: group}, we specialize to the cases of split Lie algebra and group extensions. Since the universal enveloping algebra functor and the group ring functor  respectively map split Lie algebra extensions and split group extensions to split short exact sequences of Hopf algebras, our general theory of characteristic classes applies. In these special cases, the spectral sequence under consideration is the well-known (Lyndon-)Hochschild-Serre spectral sequence. Using Theorem \ref{th: intro}  in the context of split Lie algebra extensions, we obtain the following results concerning their collapse.
\begin{corollary} Let $k$ be a field of characteristic zero and suppose $ 0 \rightarrow \mathfrak{n} \rightarrow \mathfrak{n} \rtimes_{\varphi} \mathfrak{h} \rightarrow \mathfrak{h}$ is a split extension of finite dimensional Lie algebras. Assume that $\mathrm{Der}(\mathfrak{n})$ has a semi-simple Lie subalgebra $\mathfrak{s}$ such that $\varphi$ factors through $\mathfrak{s}$, i.e.
$\varphi: \mathfrak{h} \rightarrow \mathfrak{s} \subseteq \mathrm{Der}(\mathfrak{n})$.
Then, the Hochschild-Serre spectral sequence $(E_{\ast}(M),d_{\ast})$ associated to the split extension will collapse at the second page for any $\mathfrak{h}$-module $M$.
\end{corollary}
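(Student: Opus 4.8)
The plan is to derive the collapse from the obstruction property in Theorem~\ref{th: intro}(b), after showing that every characteristic class of the extension vanishes; this vanishing will be imported, page by page, from the ``semisimple model'' $0\to\mathfrak{n}\to\mathfrak{n}\rtimes\mathfrak{s}\to\mathfrak{s}$. Applying the universal enveloping algebra functor $U$ --- which, as recalled in Sections~\ref{sec: pre} and~\ref{sec: lie algebra}, sends a split extension of Lie algebras to a split short exact sequence of Hopf algebras and identifies the associated spectral sequence with the Hochschild--Serre one --- turns the split extensions $0\to\mathfrak{n}\to\mathfrak{n}\rtimes_{\varphi}\mathfrak{h}\to\mathfrak{h}$ and $0\to\mathfrak{n}\to\mathfrak{n}\rtimes\mathfrak{s}\to\mathfrak{s}$ into split Hopf algebra extensions with common kernel $A=U(\mathfrak{n})$ and quotients $D=U(\mathfrak{h})$ and $C=U(\mathfrak{s})$. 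Since $\varphi$ factors through $\mathfrak{s}\subseteq\mathrm{Der}(\mathfrak{n})$, the Hopf algebra map $\rho=U(\varphi)\colon D\to C$ makes $A$ a $D$-module bialgebra and identifies $A\rtimes D$ with $U(\mathfrak{n}\rtimes_{\varphi}\mathfrak{h})$, so we land exactly in the situation of part (d) of Theorem~\ref{th: intro}, with $A\to A\rtimes C\to C$ in the role of the given $(t,r)$-trivial extension: once both it and $A\rtimes D$ are known to be $(t,r)$-trivial, $\rho$ carries the characteristic class ${}^{\mathfrak{s}}v_r^t$ of the model to the characteristic class $w_r^t$ of the $\mathfrak{h}$-extension.

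The key input is that the model spectral sequence is as degenerate as possible. Its $E_2$-page in coefficients $\mathrm{H}_t(\mathfrak{n},k)$ is $\mathrm{H}^p\big(\mathfrak{s},\mathrm{H}^q(\mathfrak{n},\mathrm{H}_t(\mathfrak{n},k))\big)$, and since $\mathfrak{n}$ is finite dimensional each $\mathrm{H}^q(\mathfrak{n},\mathrm{H}_t(\mathfrak{n},k))$ is a finite dimensional $\mathfrak{s}$-module. Because $k$ has characteristic zero and $\mathfrak{s}$ is semisimple, the Hochschild--Serre spectral sequence of an extension with semisimple quotient degenerates at the second page (a classical consequence of Weyl's complete reducibility theorem, via the decomposition of Lie algebra cohomology relative to a reductive subalgebra; already ${}^{\mathfrak{s}}v_2^t$ lies in $\mathrm{H}^2(\mathfrak{s},\mathrm{H}^{t-1}(\mathfrak{n},\mathrm{H}_t(\mathfrak{n},k)))=0$ by Whitehead's second lemma). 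Hence the model extension is $(t,r)$-trivial for all $t\geq 0$ and $r\geq 2$ --- the differentials in question already vanish --- and every characteristic class ${}^{\mathfrak{s}}v_r^t=d_r^{0,t}([id^t])$ is zero.

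It remains to transport this vanishing to the $\mathfrak{h}$-extension by induction on $r\geq 2$, proving at each stage that $d_r^{s,t}$ vanishes on $E_r^{s,t}(M)$ for all $s,t$ and every $M$ in $C\mbox{-mod}$ --- which, for $\mathfrak{n}\rtimes_{\varphi}\mathfrak{h}$, is precisely the class of $\mathfrak{h}$-modules. For $r=2$ the $(t,2)$-triviality of the $\mathfrak{h}$-extension is automatic. Assuming $d_p=0$ for $2\leq p\leq r-1$, the $\mathfrak{h}$-extension is $(t,r)$-trivial by definition, so part (d) of Theorem~\ref{th: intro} applies and sends ${}^{\mathfrak{s}}v_r^t=0$ to $w_r^t$, forcing $w_r^t=0$; parts (a) and (b) of Theorem~\ref{th: intro} then give, for every such $M$ and every $x\in E_r^{s,t}(M)$ with $\theta(y)=x$, the identity $d_r^{s,t}(x)=(-1)^s\,y\cdot w_r^t=0$, and surjectivity of $\theta$ upgrades this to $d_r^{s,t}=0$. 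This closes the induction and proves collapse at the second page. Besides invoking the classical degeneration for the semisimple model, the point that needs care is the bookkeeping around part (d): checking that $\rho=U(\varphi)$ really exhibits $A\rtimes D$ as the semidirect product Hopf algebra compatibly with all the module-bialgebra structures (so that the induced morphism of spectral sequences exists), and that at each stage the inductive hypothesis supplies exactly the $(t,r)$-triviality part (d) requires.
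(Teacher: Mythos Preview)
Your argument is correct and follows essentially the same route as the paper: compare with the semisimple model $0\to\mathfrak{n}\to\mathfrak{n}\rtimes\mathfrak{s}\to\mathfrak{s}\to 0$, use the classical degeneration for extensions with semisimple quotient (in finite-dimensional coefficients) to see that all of its characteristic classes vanish, then apply part~(d) iteratively to transport this vanishing to the $\mathfrak{h}$-extension and conclude via part~(b). The only cosmetic difference is that the paper invokes the Lie algebra specialization Theorem~\ref{th: sah lie} directly rather than passing through $U$ and Theorem~\ref{th: intro}, and leaves the induction on $r$ implicit.
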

\begin{corollary}
 If $0 \rightarrow \mathfrak{n} \rightarrow \mathfrak{n} \rtimes_{\varphi} \mathfrak{h} \rightarrow \mathfrak{h} \rightarrow 0$ is a split extension of Lie algebras such that $\varphi(\mathfrak{h})$ is one-dimensional, then the Hochschild-Serre spectral sequence associated to this extension will collapse at page two for all coefficients in $\mathfrak{h}$-mod.
\end{corollary}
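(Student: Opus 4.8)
The plan is to prove, by induction on $r \ge 2$, that every differential $d_r$ in the Hochschild--Serre spectral sequence of $\mathfrak{n} \rtimes_{\varphi}\mathfrak{h}$ vanishes, for every coefficient module; the engine is that an auxiliary extension with one-dimensional quotient has a spectral sequence which collapses for purely dimensional reasons, combined with the naturality of characteristic classes from Theorem~\ref{th: intro}(d). Set $\mathfrak{d} = \varphi(\mathfrak{h}) \subseteq \mathrm{Der}(\mathfrak{n})$; by hypothesis $\mathfrak{d}$ is one-dimensional, hence abelian, and $\varphi$ factors as $\mathfrak{h} \twoheadrightarrow \mathfrak{d} \hookrightarrow \mathrm{Der}(\mathfrak{n})$. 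Since the enveloping algebra functor sends split Lie algebra extensions to split short exact sequences of Hopf algebras (Section~\ref{sec: lie algebra}) and $U(\mathfrak{a}\rtimes\mathfrak{b})\cong U(\mathfrak{a})\rtimes U(\mathfrak{b})$, the surjection $\mathfrak{h}\to\mathfrak{d}$ gives a Hopf algebra map $\rho = U(\varphi): U(\mathfrak{h}) \to U(\mathfrak{d})$ turning $A := U(\mathfrak{n})$ into a $U(\mathfrak{h})$-module bialgebra via the original action. We are thus in the situation of Theorem~\ref{th: intro}(d) with $C = U(\mathfrak{d})$, $D = U(\mathfrak{h})$ and $A\rtimes D = U(\mathfrak{n}\rtimes_{\varphi}\mathfrak{h})$. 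The crucial observation is that for any $\mathfrak{d}$-module $N$ the spectral sequence of the small extension $0 \to \mathfrak{n} \to \mathfrak{n}\rtimes\mathfrak{d} \to \mathfrak{d} \to 0$ with coefficients in $N$ has $E_2^{p,q}\cong \mathrm{H}^p(\mathfrak{d},\mathrm{H}^q(\mathfrak{n},N))$, which vanishes for $p \ge 2$ because $\dim\mathfrak{d}=1$; being supported in the two adjacent columns $p=0,1$ it collapses at page two, so all of its differentials, and in particular all its characteristic classes, are zero. (Note $\mathrm{H}_t(\mathfrak{n},k)$ is genuinely a $\mathfrak{d}$-module, since the $\mathfrak{h}$-action on it factors through $\varphi$, so these classes are defined.) Write $u^t_r$ for the characteristic classes of the small extension and $v^t_r$ for those of $\mathfrak{n}\rtimes_{\varphi}\mathfrak{h}$; by the above, $u^t_r = 0$ for all $t$ and all $r \ge 2$.

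Now run the induction. The extension $\mathfrak{n}\rtimes_{\varphi}\mathfrak{h}$ is $(t,2)$-trivial for trivial reasons, so Theorem~\ref{th: intro}(d) applies at $r = 2$ and gives that $v^t_2$, being the image of $u^t_2 = 0$ under the map induced by $\rho$ on spectral sequences, vanishes for all $t$; then by parts (a) and (b) of Theorem~\ref{th: intro}, for every $M \in C\mbox{-mod}$ and every $x \in E_2^{s,t}(M)$ we may choose $y$ with $\theta(y) = x$ and obtain $d_2^{s,t}(x) = (-1)^s y\cdot v_2^t = 0$, whence $d_2 \equiv 0$ on all coefficients. Assume inductively that $d_p \equiv 0$ for all $2 \le p \le r-1$ and all coefficients in $C\mbox{-mod}$; in particular this holds in coefficients $\mathrm{H}_t(\mathfrak{n},k)$, so $\mathfrak{n}\rtimes_{\varphi}\mathfrak{h}$ is $(t,r)$-trivial for every $t$, and Theorem~\ref{th: intro}(d) applies again. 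It yields that $v^t_r$ is the image of $u^t_r = 0$, hence $v^t_r = 0$ for all $t$, and Theorem~\ref{th: intro}(b) then forces $d_r \equiv 0$ exactly as before. Consequently $d_r = 0$ for all $r \ge 2$ and all coefficient modules, which is the asserted collapse at page two.

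The only delicate point is the bookkeeping needed to keep the hypotheses of Theorem~\ref{th: intro}(d) in force along the induction: at each stage one must check that both the small extension and the large extension are $(t,r)$-trivial --- the former always, because its spectral sequence degenerates at $E_2$, the latter by the inductive hypothesis --- and that the characteristic classes, which live in coefficients $\mathrm{H}_t(\mathfrak{n},k)$ rather than in an arbitrary $M$, may legitimately be computed over $\mathfrak{d}$ instead of $\mathfrak{h}$. Everything else is the soft input that a spectral sequence concentrated in two consecutive columns collapses, together with the functoriality of the enveloping algebra construction and of the spectral sequence pairing underlying Theorem~\ref{th: intro}.
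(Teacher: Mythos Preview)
Your argument is correct and follows the same route as the paper: factor $\varphi$ through the one-dimensional image $\mathfrak{d}=\varphi(\mathfrak{h})$, observe that the Hochschild--Serre spectral sequence of $\mathfrak{n}\rtimes\mathfrak{d}$ collapses at $E_2$ for dimension reasons (two-column support), and then push the vanishing of characteristic classes along the map induced by $\varphi\colon\mathfrak{h}\to\mathfrak{d}$ via part~(d), iterating over $r$. This is exactly the strategy the paper invokes when it says ``by an analogous argument as in the proof of Corollary~\ref{cor: der}''. One small notational slip: having fixed $C=U(\mathfrak{d})$ and $D=U(\mathfrak{h})$ for the purpose of applying part~(d), you then write ``for every $M\in C\mbox{-mod}$'' when invoking parts~(a) and~(b) for the large extension; there the relevant quotient is $\mathfrak{h}$, so you mean $M\in\mathfrak{h}\mbox{-mod}$ (equivalently $D\mbox{-mod}$ in your labeling).
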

A theorem of Barnes (see \cite[Th. 3]{Barnes1}) asserts that the Hochschild-Serre spectral sequence collapses at page two for all $\mathfrak{h}$-module coefficients  when the kernel $\mathfrak{n}$ is finite dimensional and abelian. If the base field has characteristic zero, using Corollary \ref{cor:intro}, we can expand this result to extensions with reductive kernels.
\begin{theorem}
Suppose $k$ is a field of characteristic zero. Consider the split extension of finite dimensional Lie algebras
$0 \rightarrow \mathfrak{n} \rightarrow \mathfrak{g} \rightarrow \mathfrak{h} \rightarrow 0,$
and suppose that $\mathfrak{n}$ is a reductive Lie algebra. Then the Hochschild-Serre spectral sequence associated to this extension will collapse at the second page for all $M \in \mathfrak{h}\mbox{-mod}$.
\end{theorem}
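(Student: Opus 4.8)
The plan is to split off the abelian and semisimple parts of $\mathfrak{n}$ and then combine the decomposition result of Section \ref{sec: decomp} with the collapse statements already established for those two special cases.

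First I would invoke the structure theory of reductive Lie algebras in characteristic zero: there is a direct sum decomposition of Lie algebras $\mathfrak{n}=\mathfrak{z}\oplus\mathfrak{s}$ with $\mathfrak{z}=Z(\mathfrak{n})$ abelian and $\mathfrak{s}=[\mathfrak{n},\mathfrak{n}]$ semisimple. Both $\mathfrak{z}$ and $\mathfrak{s}$ are characteristic ideals of $\mathfrak{n}$, so the action $\varphi\colon\mathfrak{h}\to\mathrm{Der}(\mathfrak{n})$ determined by the splitting $\mathfrak{g}\cong\mathfrak{n}\rtimes\mathfrak{h}$ preserves each summand. Passing to universal enveloping algebras, the equality $\mathfrak{n}=\mathfrak{z}\oplus\mathfrak{s}$ gives an isomorphism of Hopf algebras $U(\mathfrak{n})\cong U(\mathfrak{z})\otimes U(\mathfrak{s})$, and the induced action of $C=U(\mathfrak{h})$ factors over this tensor product. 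Hence we are exactly in the situation of Section \ref{sec: decomp}, with $A_1=U(\mathfrak{z})$ and $A_2=U(\mathfrak{s})$, and by Corollary \ref{cor:intro} it suffices to check that the Hochschild-Serre spectral sequences of $\mathfrak{z}\rtimes\mathfrak{h}$ and $\mathfrak{s}\rtimes\mathfrak{h}$ collapse at the second page in coefficients $\mathrm{H}_t(\mathfrak{z},k)$ and $\mathrm{H}_t(\mathfrak{s},k)$ respectively, for each $t\geq 0$.

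For the abelian factor, $\mathfrak{z}$ is finite dimensional and abelian, so Barnes's theorem (\cite[Th. 3]{Barnes1}) gives collapse at page two of the Hochschild-Serre spectral sequence of $\mathfrak{z}\rtimes\mathfrak{h}$ for all $\mathfrak{h}$-module coefficients, in particular for $\mathrm{H}_t(\mathfrak{z},k)$. For the semisimple factor, since $\mathfrak{s}$ is semisimple over a field of characteristic zero, every derivation of $\mathfrak{s}$ is inner, so $\mathrm{Der}(\mathfrak{s})\cong\mathrm{ad}\,\mathfrak{s}$ is itself semisimple; taking this whole algebra as the semisimple subalgebra of $\mathrm{Der}(\mathfrak{s})$ through which the restricted action of $\mathfrak{h}$ trivially factors, the corollary above on split Lie algebra extensions whose kernel acts through a semisimple subalgebra of the derivation algebra applies and yields collapse at page two of the Hochschild-Serre spectral sequence of $\mathfrak{s}\rtimes\mathfrak{h}$ for all $\mathfrak{h}$-module coefficients, hence in particular for $\mathrm{H}_t(\mathfrak{s},k)$.

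Feeding these two facts into Corollary \ref{cor:intro} shows that the Hochschild-Serre spectral sequence of $\mathfrak{g}\cong\mathfrak{n}\rtimes\mathfrak{h}$ collapses at the second page for all coefficients $M$ with $M^{U(\mathfrak{n})}=M$, which is precisely the category of $\mathfrak{h}$-modules, as desired. I expect the only genuine subtlety to lie in verifying the Hopf algebraic hypotheses of Section \ref{sec: decomp}, namely that $\mathfrak{n}=\mathfrak{z}\oplus\mathfrak{s}$ induces an honest tensor factorization $U(\mathfrak{n})\cong U(\mathfrak{z})\otimes U(\mathfrak{s})$ of Hopf algebras compatible with the $U(\mathfrak{h})$-module bialgebra structure; this follows from the $\varphi$-invariance of $Z(\mathfrak{n})$ and $[\mathfrak{n},\mathfrak{n}]$ together with the standard identification $U(\mathfrak{a}\oplus\mathfrak{b})\cong U(\mathfrak{a})\otimes U(\mathfrak{b})$, but it should be spelled out carefully.
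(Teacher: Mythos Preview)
Your proof is correct and uses the same ingredients as the paper: the reductive decomposition $\mathfrak{n}=\mathfrak{z}\oplus\mathfrak{s}$, Barnes's theorem for the abelian part, the collapse statement for actions factoring through a semisimple subalgebra of derivations (Corollary~\ref{cor: der}) for the semisimple part, and the decomposition result (Corollary~\ref{cor: factor}). The only organizational difference is that the paper first establishes collapse for the ``universal'' extension $\mathfrak{n}\rtimes\mathrm{Der}(\mathfrak{n})$ (using $\mathrm{Der}(\mathfrak{n})\cong\mathrm{Der}(\mathfrak{a})\oplus\mathrm{Der}(\mathfrak{s})$ and $\mathfrak{s}\cong\mathrm{Der}(\mathfrak{s})$) and then transfers to $\mathfrak{n}\rtimes\mathfrak{h}$ via Theorem~\ref{th: sah lie}(d), whereas you apply the decomposition corollary directly with quotient $\mathfrak{h}$ and invoke naturality only inside the treatment of the semisimple factor; both routes are equivalent.
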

It turns out that in the group case, for the split group extensions of type (\ref{eq: intro1}), all characteristic classes have finite order. In Section \ref{sec: group}, we expand several results of Sah from \cite{Sah} about the order of characteristic classes. Because of the technical formulations we omit the statements here and give a corollary instead.
\begin{corollary}The Lyndon-Hochschild-Serre spectral sequence associated to (\ref{eq: intro1}) collapses at the second page for all $H$-modules $M$
if and only if the characteristic classes $v_r^{p^n}$ are zero for all primes $p$ with $(p-1)| (r-1)$, for all $n \in \mathbb{N}_0$ and all $r\geq 2$.
\end{corollary}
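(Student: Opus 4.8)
The plan is to reduce the statement to the vanishing of \emph{all} characteristic classes of (\ref{eq: intro1}), and then to feed in the quantitative information on their order obtained in Section \ref{sec: group}. First I would record, for an arbitrary split short exact sequence of Hopf algebras $A\rightarrow B\rightarrow C$, that the associated spectral sequence $(E_\ast(M),d_\ast)$ collapses at $E_2$ for every $M\in C\mbox{-mod}$ if and only if $v_r^t=0$ for all $t\geq0$ and all $r\geq2$ (it being understood that $v_r^t$ is defined as soon as the sequence is $(t,r)$-trivial, a condition that propagates with $r$). This is an induction on the page: if $d_p^{s,t}=0$ in all coefficients for $2\leq p\leq r-1$, then $\mathrm{H}_t(A,k)\in\mathscr{M}_r^t(A,B)$, so $v_r^t=d_r^{0,t}([id^t])$ is defined; by Theorem \ref{th: intro}(b) together with the surjectivity of $\theta$ from part (a), $v_r^t=0$ forces $d_r^{s,t}=0$ for all $s$ and all $M$, while conversely $d_r^{0,t}([id^t])=v_r^t$ by definition, so collapse forces $(t,r)$-triviality for all $t,r$ and $v_r^t=0$. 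Hence collapse at $E_2$ in all coefficients is equivalent to the vanishing of all $v_r^t$, and this already yields the ``only if'' direction of the corollary.

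For the ``if'' direction it remains to show that, for the extension (\ref{eq: intro1}), vanishing of $v_r^{p^n}$ for all primes $p$ with $(p-1)\mid(r-1)$, all $n\in\mathbb{N}_0$ and all $r\geq2$ forces $v_r^t=0$ for every $t$. Here $k=\mathbb{Z}$ and $A=\mathbb{Z}[L]$, so $\mathrm{H}_t(A,k)=\bigwedge^t L$ and $v_r^t\in\mathrm{H}^r\!\big(H;\mathrm{Hom}(\bigwedge^t L,\bigwedge^{t-r+1}L)\big)$. I would then invoke the two facts furnished by the order computations of Section \ref{sec: group} (extending those of \cite{Sah}): (i) $v_r^t$ has finite order and every prime dividing it satisfies $(p-1)\mid(r-1)$; and (ii) for each such prime $p$, the $p$-primary component of $v_r^t$ is a prescribed natural expression --- assembled through the spectral-sequence pairings $P_{i,j}$ of Theorem \ref{intro: decomp} --- in the $p$-primary components of the classes $v_r^{p^m}$ with $p^m\leq t$. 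Granting (i) and (ii), fix $r$ and assume inductively that $d_p=0$ in all coefficients for $p<r$ (so every $v_r^t$ is defined); for each $t$ and each prime $p$ with $(p-1)\mid(r-1)$ the hypothesis $v_r^{p^m}=0$ kills the $p$-primary component of $v_r^t$ by (ii), and since by (i) no other primes occur, $v_r^t=0$; therefore $d_r^{s,t}=0$ for all $s$, all $t$ and all $M$, and one passes to page $r+1$.

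The substantive work is thus the proof of (i) and (ii), which I would carry out by extending Sah's analysis, passing to the universal split extension $L\rtimes\mathrm{GL}_n(\mathbb{Z})$ and using naturality of the construction. The torsion statement in (i) follows, as in \cite{CharlapVasquez} and \cite{Sah}, from the fact that rationally the $\bigwedge^t L$ are pairwise non-isomorphic irreducible $\mathrm{GL}_n(\mathbb{Q})$-representations, so $[id^t]$ lies in the invariant line and $d_r([id^t])$ is torsion for $r\geq2$. The control of the primes and of the exponents comes from restricting $v_r^t$ to cyclic subgroups of $\mathrm{GL}_n(\mathbb{Z})$ of prime-power order: for an element of order $p^j$ the lattice $L$ splits $p$-integrally into pieces on which Theorem \ref{intro: decomp} and Corollary \ref{cor:intro} apply, and the characteristic classes of the basic cyclotomic pieces (the cyclic group $C_{p^j}$ acting on $\mathbb{Z}[\zeta_{p^j}]$) are computed through the cohomology of $C_{p^j}$ with the relevant twisted coefficients, which is governed by the power sums $\sum_{a\in(\mathbb{Z}/p)^{\times}}a^{r-1}$ and by a von Staudt--Clausen type periodicity --- precisely what forces the exponent to be a power of $p$ and the congruence $(p-1)\mid(r-1)$. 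Reassembling these local contributions through the decomposition formula of Theorem \ref{intro: decomp} yields (ii), and tracking denominators yields the order bound in (i). The step I expect to be the main obstacle is making the ``prescribed natural expression'' in (ii) sufficiently explicit: controlling the detection of the torsion class $v_r^t$ on the finite subgroups of $\mathrm{GL}_n(\mathbb{Z})$, and verifying that vanishing of the classes $v_r^{p^m}$ genuinely propagates through the pairings $P_{i,j}$.
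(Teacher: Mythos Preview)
Your reduction to the vanishing of all $v_r^t$ and the ``only if'' direction are correct and match the paper. For the ``if'' direction, however, you take a substantially harder route than needed, and the route has a real gap.

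The paper does \emph{not} use the pairings $P_{i,j}$ of Theorem~\ref{intro: decomp} here; those require a tensor splitting $A=A_1\otimes A_2$ of the kernel, which a single lattice $L$ does not admit as an $H$-module (and certainly not as a $\mathrm{GL}_n(\mathbb{Z})$-module). Instead the paper exploits the \emph{internal} pairing coming from the exterior product $\Lambda^iL\otimes\Lambda^jL\to\Lambda^{i+j}L$ on the same extension. The key elementary identity is
\[
[id^i]\cdot[id^j]=\binom{t}{i}\,[id^t]\qquad(i+j=t),
\]
so the Leibniz rule gives $\binom{t}{i}v_r^t=v_r^i\cdot[id^j]+(-1)^i[id^i]\cdot v_r^j$. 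If by induction on $t$ the lower classes vanish, then $\binom{t}{i}v_r^t=0$ for all $1\le i\le t-1$; by Lucas' Lemma (Lemma~\ref{lemma: lucas}) the gcd of these binomials is $p$ when $t=p^n$ and $1$ otherwise. Combined with Sah's order bound $B_r^t$ (your fact (i), i.e.\ Proposition~\ref{lemma: sah}), this yields Theorem~\ref{th: prime order char}: the order of $v_r^t$ divides $\prod_{k=r}^t\chi_r^k$, and under the hypothesis of the corollary every $\chi_r^k=1$. Hence $v_r^t=0$ for all $t$, and Theorem~\ref{th: sahgroup}(b) closes the induction on $r$.

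Your proposed detour --- pass to $L\rtimes\mathrm{GL}_n(\mathbb{Z})$, restrict to cyclic $p$-subgroups, decompose $L$ $p$-integrally and compute on cyclotomic pieces via $P_{i,j}$ --- is unnecessary, and the step you yourself flag is a genuine obstacle: restriction from $H$ (or $\mathrm{GL}_n(\mathbb{Z})$) to its finite subgroups is in general not injective on torsion, so vanishing after restriction would not by itself force $v_r^t=0$. Moreover, even on a cyclic $p$-group the pieces produced by a $p$-integral splitting are cyclotomic lattices, not the exterior powers indexed by $p^m$, so your ``prescribed natural expression'' in the classes $v_r^{p^m}$ via $P_{i,j}$ does not materialize. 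The binomial/Lucas argument bypasses all of this with a two-line induction.
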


We would like to thank the referee for the thorough, constructive  and quite valuable comments and suggestions. They led to many improvements and corrections of the manuscript for which we are indebted.

\section{Preliminaries on semi-direct products of Hopf algebras} \label{sec: pre}
We assume the reader is familiar with the basic concepts from the theory of Hopf algebras. A good introduction can, for example, be found in \cite{Cartier}, \cite{Milnor} and \cite{SweedlerBook}.
Throughout the paper, we also assume a working knowledge of homological algebra, spectral sequences (see \cite{CartanEilenberg}, \cite{McClearly}, and \cite{Weibel}), and some introductory notions from the cohomology theories of groups and Lie algebras (see \cite{Brown}, \cite{Knapp} and \cite{Weibel}).

Let us now establish some notation. Suppose $A$ is a Hopf algebra over the field $k$. With $A \otimes A$ we mean the tensor product over the ground field $k$. Moreover, when there is no subscript present, the tensor product will always be over the ground field $k$. Multiplication in $A$ will be denoted by
\[ m_A : A \otimes A \rightarrow A : a\otimes b \mapsto ab. \]
For the comultiplication, we will use the standard Sweedler notation
\[ \Delta_A: A \rightarrow A \otimes A : a \mapsto \sum a_{(1)}\otimes a_{(2)}. \]
So by coassociativity, we may unambiguously write
\[ (\Delta_A \otimes \mathrm{Id})\circ \Delta_A (b)=(\mathrm{Id} \otimes \Delta_A)\circ \Delta_A (b)=\sum  b_{(1)}\otimes b_{(2)}\otimes b_{(3)}.\]
The unit and counit of $A$ are respectively denoted by
$\eta_A : k \rightarrow A$ and $\varepsilon_{A}: A \rightarrow k$. $A$ also has an antipode $S_A: A \rightarrow A$. The augmentation ideal $A^{+}$ of $A$ is by definition the kernel of $\varepsilon_A$.

Suppose $A,B$ and $H$ are Hopf algebras (in fact, we only need them to be (bi)algebras here) such that $A$ and $B$ are $H$-modules, that is, we have an $H$-module structure map
\[ \tau_A: H \otimes A \rightarrow A : h \otimes a \mapsto h \cdot a\]
and similarly a structure map $\tau_B$ for $B$.
By $\mathrm{Alg}(A,B)$, we mean the set of algebra maps from $A$ to $B$, and by $\mathrm{Alg}_H(A,B)$, the set of algebra maps from $A$ to $B$ that are also $H$-module maps.
We say $A$ is an $H$-module algebra if $m_A$ and $\eta_A$ are $H$-module maps. Dually,
we say $A$ is an $H$-module coalgebra if $\Delta_A$ and $\varepsilon_A$ are $H$-module maps (here $k$ becomes an $H$-module via $\varepsilon_H$ and $A\otimes A$ becomes an $H$-module via $\Delta_H$).
Combining these two, we say $A$ is an $H$-module bialgebra if it is both an $H$-module algebra and an  $H$-module coalgebra. Starting with bialgebras that are $H$-comodules, one can define the notion of $H$-comodule (co/bi)algebra in a similar way. \\
\indent Next, we recall the notion of (split) short exact sequences of Hopf algebras.
Let $\mathscr{H}$ be the category of Hopf algebras over the field $k$. It is well-known that kernels and cokernels exist in this category. If $u: H \rightarrow L$ is a map of Hopf algebras, following Molnar's notation in \cite{Molnar}, we will denote the categorical kernel and cokernel of $u$ in $\mathscr{H}$ by KER(u) and COK(u), respectively. One can check that the kernel and cokernel of $u$ are given by
\begin{itemize}
\item[-] $\mathrm{KER}(u)= (\mathscr{K}(\mathrm{Ker}(u)),j)$
\item[-] $\mathrm{COK}(u)= (L/ \mathscr{J}(\mathrm{Im}(u)),p)$,
\end{itemize}
where $j$ is the canonical inclusion, $p$ the canonical projection and
\begin{itemize}
\item[-] $\mathscr{K}(\mathrm{Ker}(u)) = \mbox{the largest Hopf subalgebra of $H$ contained in $\mathrm{Ker}(u)+k$}$
\item[-] $\mathscr{J}(\mathrm{Im}(u)) = \mbox{the smallest Hopf ideal of $L$ containing $\mathrm{Im}(u)^{+}$.}$
\end{itemize}
\begin{definition} \rm Let $H$ be a Hopf algebra. The \emph{left adjoint action} of $H$ is defined by
\[ \mathrm{ad}^l_H : H \otimes H \rightarrow H : g \otimes h \mapsto \sum g_{(1)}h S_{H}(g_{(2)}), \]
and \emph{right adjoint action} of $H$ is defined as
\[ \mathrm{ad}^r_H : H \otimes H \rightarrow H : h \otimes g \mapsto \sum S_H(g_{(1)})h g_{(2)}. \]
Dually, the \emph{left adjoint coaction} of $H$ is defined by
\[ \mathrm{co}^l_H : H \rightarrow H \otimes H  : g \mapsto \sum g_{(1)}S_H(g_{(3)})\otimes g_{(2)}, \]
and the \emph{right adjoint coaction} of $H$ is defined as
\[ \mathrm{co}^r_H : H \rightarrow H \otimes H  : g \mapsto \sum g_{(2)}\otimes S_H(g_{(1)})g_{(3)}. \]
\end{definition}
In \cite{Molnar}, it is shown that $\mathrm{ad}^l_H$ turns $H$ into an $H$-module algebra and $\mathrm{co}^l_H$ turns $H$ into a $H$-comodule coalgebra.
\begin{definition} \rm Let $u: H \rightarrow L$ be a map of Hopf algebras. We say $u$ is \emph{normal} if $u(H)$ is stable under the left and right adjoint actions of $L$. We say $u$ is \emph{conormal} if $ker(u)$ is a left and right  $H$-cosubmodule of $H$ under the left and right adjoint coactions of $H$, respectively.
\end{definition}
The following lemma is proven in \cite{Molnar}.
\begin{lemma}\label{lemma: normal} If $u: H \rightarrow L$ is a normal map between Hopf algebras $H$ and $L$, then
\[ u(H^{+})L=Lu(H^{+}). \]
\end{lemma}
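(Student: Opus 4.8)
The plan is to establish the two inclusions $Lu(H^{+})\subseteq u(H^{+})L$ and $u(H^{+})L \subseteq Lu(H^{+})$ separately; the two arguments are mirror images of each other, one exploiting the left adjoint action of $L$ and the other the right adjoint action.

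First I would record two elementary identities valid in any Hopf algebra $L$: for all $g,x\in L$,
\[ gx=\sum \mathrm{ad}^{l}_L(g_{(1)}\otimes x)\,g_{(2)} \qquad\text{and}\qquad xg=\sum g_{(1)}\,\mathrm{ad}^{r}_L(x\otimes g_{(2)}). \]
Each of these follows from coassociativity together with the antipode identities $\sum g_{(1)}S_L(g_{(2)})=\varepsilon_L(g)1_L=\sum S_L(g_{(1)})g_{(2)}$ and the counit axiom: expanding the right-hand sides to $\sum g_{(1)}xS_L(g_{(2)})g_{(3)}$ (resp.\ $\sum g_{(1)}S_L(g_{(2)})xg_{(3)}$) and collapsing the sub-coproduct against a counit leaves $gx$ (resp.\ $xg$). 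In the group-algebra case these are just $gx=(gxg^{-1})g$ and $xg=g(g^{-1}xg)$.

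Next I would note that $u(H^{+})=u(H)\cap L^{+}$: the inclusion $\subseteq$ holds because $u$ is a coalgebra map, so $\varepsilon_L\circ u=\varepsilon_H$, while conversely $u(a)\in L^{+}$ forces $\varepsilon_H(a)=\varepsilon_L(u(a))=0$, i.e.\ $a\in H^{+}$. I would then observe that both adjoint actions of $L$ preserve the counit up to a scalar, $\varepsilon_L(\mathrm{ad}^{l}_L(g\otimes x))=\varepsilon_L(g)\varepsilon_L(x)=\varepsilon_L(\mathrm{ad}^{r}_L(x\otimes g))$, using $\varepsilon_L\circ S_L=\varepsilon_L$. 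Combining this with normality of $u$, which gives $\mathrm{ad}^{l}_L(g\otimes x),\mathrm{ad}^{r}_L(x\otimes g)\in u(H)$ whenever $x\in u(H)$, it follows that $\mathrm{ad}^{l}_L(g\otimes x),\mathrm{ad}^{r}_L(x\otimes g)\in u(H)\cap L^{+}=u(H^{+})$ whenever $x\in u(H^{+})$.

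Finally, for $g\in L$ and $x\in u(H^{+})$ the first identity exhibits $gx$ as a sum $\sum \mathrm{ad}^{l}_L(g_{(1)}\otimes x)\,g_{(2)}$ with every factor $\mathrm{ad}^{l}_L(g_{(1)}\otimes x)$ lying in $u(H^{+})$, whence $gx\in u(H^{+})L$ and so $Lu(H^{+})\subseteq u(H^{+})L$. Symmetrically, the second identity together with the right adjoint action yields $u(H^{+})L\subseteq Lu(H^{+})$, and the two inclusions give the lemma. The whole argument is a short direct computation; the only point demanding care is upgrading the normality hypothesis from stability of $u(H)$ to stability of $u(H^{+})$ under the adjoint actions, which is exactly where the counit compatibility above is used.
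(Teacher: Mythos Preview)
Your argument is correct. The two rewriting identities for $gx$ and $xg$ are the standard Hopf-algebra version of ``conjugate and reconjugate,'' and your upgrade from stability of $u(H)$ to stability of $u(H^{+})$ via the counit computation is exactly the right step.

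Note that the paper does not actually supply a proof of this lemma: it simply cites Molnar's paper \cite{Molnar} for the result. What you have written is essentially the argument one finds there, so there is nothing further to compare.
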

We are now ready to explain what we mean by``a short exact sequence of Hopf algebras''.
\begin{definition}\rm Let $A \xrightarrow{i} B \xrightarrow{\pi} C,$
be a sequence of Hopf algebra maps. We say this sequence is \emph{exact} if $i$ is normal, $\pi$ is conormal, $(A,i)=\mathrm{KER}(\pi)$ and $(C,\pi)=\mathrm{COK}(i)$. We say an exact sequence is \emph{split} if there is a Hopf algebra map $\sigma: C \rightarrow B$, called a section, such that $\pi \circ \sigma = \mathrm{Id_C}$.
\end{definition}
Now, let us define the semi-direct product of two Hopf algebras.
\begin{definition} \label{def: smash}\rm Let $A$ and $C$ be two Hopf algebras such that $A$ is a $C$-module bialgebra. The semi-direct product (or smash product) $A \rtimes C$ of $A$ and $C$ is an algebra with the underlying vector space structure of $A \otimes C$ and the multiplication in $A \rtimes C$ given by
\[ (a \otimes g)(b \otimes h)= \sum a(g_{(1)}\cdot b)\otimes g_{(2)}h. \]
\end{definition}
There are algebra injections
$i: A \rightarrow A \rtimes C$ and $j: C \rightarrow A \rtimes C$,
defined in the obvious way, that give the semi-direct product the following universal property: if $B$ is any algebra then for any $g \in \mathrm{Alg}(C,B)$ and any $f \in \mathrm{Alg}_C(A,B)$, the map
\[ f \rtimes g: A \rtimes C \rightarrow B :   a \otimes c \mapsto f(a)g(c) \]
is the unique algebra map such that $(f \rtimes g )\circ i =f$ and $(f \rtimes g )\circ j =g$. Here, $B$ becomes a $C$-module via the adjoint action of $C$ on $B$ associated to $g$, i.e. via the following structure map
\[  C \otimes B \rightarrow B : c \otimes b \mapsto \sum g(c_{(1)})b g(S_{C}(c_{(2)})). \]
\indent Note that given two coalgebras $A$ and $C$, we can always give $A\otimes C$ the structure of a coalgebra by defining the coproduct as
$\Delta(a \otimes g)= \sum a_{(1)}\otimes g_{(1)} \otimes a_{(2)}\otimes g_{(2)}.$
While the counit is given by
$\varepsilon: A \otimes C \rightarrow k : a \otimes g \mapsto \varepsilon_A(a)\varepsilon_C(g).$
One can easily check that these operations turn $A\otimes C$ into a coalgebra. We call this the standard coalgebra structure on $A \otimes B$.

It is also important to note that the algebra structure on $A \rtimes C$ need not be compatible with the standard coalgebra structure we have just defined. So, in general, $A \rtimes C$ is not a Hopf algebra.  However, in \cite{Molnar}, R. Molnar shows that if $C$ is a cocommutative Hopf algebra then $A \rtimes C$ is a Hopf algebra with the standard coalgebra structure. Also, in \cite{Radford}, D. Radford gives explicit, necessary and sufficient conditions for $A \rtimes C$ to be a Hopf algebra with the standard coalgebra structure. Actually, Radford considers the more general case where $A$ is also a $C$-comodule coalgebra and the coalgebra structure on $A \otimes C$ is induced by this comodule structure. But our situation can be seen as a special case of this.

The following theorem shows that split short exact sequences of Hopf algebras entail semi-direct products of Hopf algebras that are a fortiori Hopf algebras.
\begin{theorem}{\normalfont(Molnar, \cite{Molnar})} Suppose $A \xrightarrow{i} B \xrightarrow{\pi} C$
is a split short exact sequence of Hopf algebras with a section $\sigma: C \rightarrow B$. Then $A$ has a structure of a $C$-module bialgebra such that $i \in \mathrm{Alg}_C(A,B)$ and for which the universal property of the semi-direct product entails a Hopf algebra isomorphism
$i \rtimes \sigma:   A \rtimes C \xrightarrow{\cong} B,$ where $A \rtimes C$ is given the standard coalgebra structure.
\end{theorem}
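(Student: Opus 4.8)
The plan is to produce the $C$-module bialgebra structure on $A$ by transporting the adjoint action of $B$ back along $\sigma$, to obtain $i\rtimes\sigma$ from the universal property of the semi-direct product, and finally to identify its inverse with the canonical projection of $B$ onto its $\pi$-coinvariants. \emph{Constructing the action.} Since $i$ is normal, $i(A)$ is stable under the left adjoint action of $B$; precomposing with $\sigma$ therefore gives a well-defined structure map $\tau_A\colon C\otimes A\to A$ characterised by $i(c\cdot a)=\sum\sigma(c_{(1)})\,i(a)\,S_B(\sigma(c_{(2)}))$. With this action $i$ is automatically a $C$-module map once $B$ carries the $C$-module structure of Definition~\ref{def: smash} attached to $\sigma$, so $i\in\mathrm{Alg}_C(A,B)$. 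One then checks that $A$ is a $C$-module bialgebra: that $m_A$ and $\eta_A$ are $C$-module maps is the statement, recalled from \cite{Molnar}, that the adjoint action makes a Hopf algebra a module algebra over itself, restricted along the algebra map $\sigma$; that $\Delta_A$ and $\varepsilon_A$ are $C$-module maps is a short Sweedler-notation computation using that $\sigma$ is a coalgebra map together with the hypotheses on the sequence. The relation $\pi\sigma=\mathrm{Id}_C$ is what guarantees the displayed formula genuinely takes values in $i(A)$ and is independent of how elements of $B$ are written.

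\emph{The comparison map.} Feeding $\sigma\in\mathrm{Alg}(C,B)$ and $i\in\mathrm{Alg}_C(A,B)$ into the universal property of the semi-direct product yields a unique algebra map $\phi:=i\rtimes\sigma\colon A\rtimes C\to B$, $a\otimes c\mapsto i(a)\sigma(c)$, with $\phi\circ i=i$ and $\phi\circ j=\sigma$. One then verifies that $\phi$ respects the standard coalgebra structure: because $\Delta_B$ is multiplicative and $i,\sigma$ are coalgebra maps, both $\Delta_B\circ\phi$ and $(\phi\otimes\phi)\circ\Delta_{A\rtimes C}$ send $a\otimes c$ to $\sum i(a_{(1)})\sigma(c_{(1)})\otimes i(a_{(2)})\sigma(c_{(2)})$, and compatibility with the counits is immediate. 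Hence $\phi$ is a bialgebra map, and a bialgebra map between Hopf algebras automatically commutes with the antipodes, so $\phi$ is a map of Hopf algebras.

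\emph{The inverse.} It remains to show that $\phi$ is bijective. For this I would exhibit the candidate inverse
\[ \psi\colon B\longrightarrow A\rtimes C,\qquad \psi(b)=\sum\, i^{-1}\Big(b_{(1)}\,\sigma\big(S_C\pi(b_{(2)})\big)\Big)\otimes\pi(b_{(3)}), \]
noting that $\sum b_{(1)}\sigma(S_C\pi(b_{(2)}))$ is the image of $b$ under the standard projection of $B$ onto $B^{\mathrm{co}\,\pi}=\{b\in B\mid \sum b_{(1)}\otimes\pi(b_{(2)})=b\otimes 1\}$. One has $i(A)\subseteq B^{\mathrm{co}\,\pi}$ — since $(A,i)=\mathrm{KER}(\pi)$ forces $\pi(i(a))=\varepsilon_A(a)1_C$ — and, under the hypotheses on the sequence, $B^{\mathrm{co}\,\pi}$ is a Hopf subalgebra of $B$ contained in $\mathrm{Ker}(\pi)+k$, so $B^{\mathrm{co}\,\pi}\subseteq\mathscr{K}(\mathrm{Ker}\,\pi)=i(A)$ and therefore $B^{\mathrm{co}\,\pi}=i(A)$. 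This makes $\psi$ well defined; a manipulation with the antipode and counit axioms and the relation $\pi\sigma=\mathrm{Id}_C$ then shows $\phi\circ\psi=\mathrm{Id}_B$ and $\psi\circ\phi=\mathrm{Id}_{A\rtimes C}$, which finishes the proof.

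\emph{Main obstacle.} The crux is the identification $B^{\mathrm{co}\,\pi}=i(A)$, and in particular the assertion that $B^{\mathrm{co}\,\pi}$ is a genuine Hopf subalgebra of $B$ (notably a subcoalgebra): this is exactly where the full strength of exactness of the Hopf-algebra sequence must be invoked — the descriptions of $\mathrm{KER}(\pi)$ and $\mathrm{COK}(i)$, the normality of $i$ through Lemma~\ref{lemma: normal}, and the conormality of $\pi$. Everything before this point is structural and everything after it is routine bookkeeping with Sweedler notation.
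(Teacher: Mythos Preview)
The paper does not prove this theorem: it is stated with attribution to Molnar and no proof environment follows. The only trace of the argument in the paper is the second item of Remark~\ref{remark: some remarks}, which records exactly your first step (the $C$-action on $A$ is the adjoint $B$-action pulled back along $\sigma$). So there is no ``paper's own proof'' to compare against; your outline is essentially a reconstruction of Molnar's original argument, overlaid with the coinvariant/projection viewpoint later systematised by Radford in \cite{Radford}.

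On correctness: the construction of the action, the verification that $i\in\mathrm{Alg}_C(A,B)$, and the check that $\phi=i\rtimes\sigma$ is a bialgebra (hence Hopf) map are all fine and match what one finds in \cite{Molnar}. Your candidate inverse $\psi$ is the right one. The point you flag as the main obstacle is genuinely the delicate step, and your sketch there is slightly optimistic: $B^{\mathrm{co}\,\pi}$ is always a subalgebra, but showing it is a sub\emph{coalgebra} (so that it sits inside $\mathscr{K}(\mathrm{Ker}\,\pi)$) is not automatic and is precisely what the conormality of $\pi$ is for. In practice one does not need the full Hopf-subalgebra statement to finish: it suffices to check directly that the projection $b\mapsto\sum b_{(1)}\sigma(S_C\pi(b_{(2)}))$ lands in $i(A)$, which follows from $(C,\pi)=\mathrm{COK}(i)$ together with Lemma~\ref{lemma: normal}, and then the two composites $\phi\psi$ and $\psi\phi$ collapse by the antipode/counit axioms as you indicate. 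With that adjustment your argument goes through.
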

\begin{remark}\label{remark: some remarks} \rm
\begin{itemize}\item[]
\item[-] If $A \rightarrow B \rightarrow C$ is a split short exact sequence of Hopf algebras then $B$ is a free $A$-module via the inclusion of $A$ into $B$. Indeed, by the theorem we may assume that $B=A \rtimes C$. So we need to show that $A \rtimes C$ is a free $A$-module via the inclusion $A \rightarrow A \rtimes C : a \mapsto a \otimes 1$. But this is clear, since if $\{c_{\alpha}\}_{\alpha \in I}$ is a vector space basis for $C$, then $\{1 \otimes c_{\alpha}\}_{\alpha \in I}$ is an $A$-module basis for $A \rtimes C$.
\item[-] The $C$-module structure on $A$ is obtained as follows: by normality, $A$ can be seen as a $B$-module via the adjoint action of $B$, then $\sigma: C \rightarrow B$ gives $A$ a $C$-module structure.

    \smallskip

\item[-] Let $A$ and $C$ be two Hopf algebras such that $A$ is a $C$-module bialgebra. If $A \rtimes C$ equipped with the standard coalgebra structure is a Hopf algebra, then
$A \xrightarrow{i} A \rtimes C \xrightarrow{\pi} C$
is a split short exact sequence of Hopf algebras, with a section equal to the canonical inclusion $j: C \rightarrow A \rtimes C$ and $\pi$ defined by $\pi: A \rtimes C \rightarrow C : a \otimes c \mapsto \varepsilon_A(a)c.$

\smallskip

\item[-] If in Definition \ref{def: smash}, $A$ is a trivial $C$-module, meaning $c\cdot a=\varepsilon_C(c)a$ for all $c \in C$ and $a \in A$, then the product on $A\rtimes C$ is just the ordinary tensor algebra structure (then we just write $A \otimes C $ instead of $A \rtimes C$). In this case, the algebra structure is compatible with the standard coalgebra structure and thus, $A \otimes C$ is a Hopf algebra.

\end{itemize}
\end{remark}

\section{(Co)homology of Hopf algebras}\label{sec: cohom}
Let $A$ be a Hopf algebra over the field $k$. Given $A$-modules $M$ and $N$, we turn $M \otimes_k N$ into an $A$-module by defining
$a(m \otimes n)= \sum a_{(1)}m \otimes a_{(2)}n.$
Also, we turn $\mathrm{Hom}_k(M,N)$ into an $A$-module via
$(af)(m)=\sum a_{(1)}f(S(a_{(2)})m).$
Furthermore, these $A$-module structures are compatible with each other in the following sense. For $A$-modules $M$, $N$ and $K$, there is a natural isomorphism
\begin{equation} \label{eq: comp hom tensor mod}\Psi: \mathrm{Hom}_A (M \otimes_k N, K) \xrightarrow{\cong} \mathrm{Hom}_A(M,\mathrm{Hom}_k(N,K)) \end{equation}
where $\Psi(f)(m)(n)=f(m \otimes n)$.
\begin{definition}\rm
Let $M$ be an $A$-module. The \emph{invariants} of $M$ is defined as the submodule
$M^{A}= \{ m \in M \ | \ A^{+}m=0 \}$
and the \emph{coinvariants} of $M$, as the quotient $M_{A}= M/A^{+}M.$
\end{definition}
One can easily check that $-^{A}$ and $-_{A}$ are functors from $A$-mod to $k$-mod. Furthermore, we have the natural isomorphisms
$\mathrm{Hom}_A(k,-) \cong -^{A}$
and
$k\otimes_A - \cong -_{A}.$ Note that the isomorphism (\ref{eq: comp hom tensor mod}) implies that
$\mathrm{Hom}_A(N,K) \xrightarrow{\cong} \mathrm{Hom}_k(N,K)^A$
for all $A$-modules $N$ and $K$.

\begin{definition}\rm
Let $M$ be an $A$-module. The $n$-th homology of $A$ with coefficients in $M$ is defined as
\[ \mathrm{H}_n(A,M):= \mathrm{Tor}^A_n(k,M)=\mathrm{L}_n(-_{A})(M). \]
The $n$-th cohomology of $A$ with coefficients in $M$ is defined as
\[ \mathrm{H}^n(A,M):= \mathrm{Ext}_A^n(k,M)=\mathrm{R}^n(-^{A})(M). \]
\end{definition}
\begin{remark}\rm It is well-known that this definition of (co)homology is isomorphic to the Hochschild (co)homology of $A$ with coefficients in $M$, where $M$ is turned into a bimodule via the augmentation map $\varepsilon$. (see \cite{CartanEilenberg}).
\end{remark}
Now, suppose we have a split short exact sequence of Hopf algebras $A \xrightarrow{i} B \xrightarrow{\pi} C.$
When $M$ is a $B$-module, one can verify using Lemma \ref{lemma: normal} that $M^{A}$ and $M_{A}$ are also $B$-modules. Because $A$ acts trivially on $M^A$ and $M_{A}$, it follows that we can give these spaces a $C$-module structure.
\begin{lemma}\label{prop: pairing}
If $M$ is a $B$-module such that $M^A=M$, then we have an isomorphism of $C$-modules
\[ \mathrm{H}^n(A,M) \xrightarrow{\cong} \mathrm{Hom}_k(\mathrm{H}_n(A,k),M)\]
\end{lemma}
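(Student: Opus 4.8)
The plan is to compute both sides of the claimed isomorphism from a single projective resolution of $k$ over $B$, and then to extract the $C$-module statement from a universal-coefficient argument that is available because $k$ is a field.

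First I would note that $M^A=M$ says exactly $A^{+}M=0$, so $A$ acts trivially on $M$ and $M$ is genuinely a $C$-module. Choose a projective resolution $P_\bullet \twoheadrightarrow k$ in $B\mbox{-mod}$. By Remark \ref{remark: some remarks}, $B$ is free as an $A$-module via $i$, hence every projective $B$-module is $A$-projective; thus $P_\bullet \to k$ is also a projective resolution of $k$ over $A$, and $\mathrm{H}^n(A,M)=H^n(\mathrm{Hom}_A(P_\bullet,M))$ while $\mathrm{H}_n(A,k)=H_n(k\otimes_A P_\bullet)$. Using Lemma \ref{lemma: normal} for the normal inclusion $i$, one checks (exactly as in the discussion preceding this lemma) that each $A^{+}P_n$ is a $B$-submodule of $P_n$ and each $\mathrm{Hom}_A(P_n,M)=\mathrm{Hom}_k(P_n,M)^A$ is a $B$-submodule of $\mathrm{Hom}_k(P_n,M)$, and that $A$ acts trivially on the quotients $(P_n)_A := k\otimes_A P_n$ and on $\mathrm{Hom}_A(P_n,M)$. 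Hence $\mathrm{Hom}_A(P_\bullet,M)$ and $(P_\bullet)_A$ are complexes of $C$-modules, and the $C$-module structures they induce on $\mathrm{H}^n(A,M)$ and $\mathrm{H}_n(A,k)$ are the intended ones; independence of the resolution follows from the comparison theorem applied $B$-equivariantly.

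Next I would establish the key identification. Since $M$ is $A$-trivial, any $A$-linear map $f\colon P_n\to M$ vanishes on $A^{+}P_n$ and therefore factors uniquely through $(P_n)_A$; this produces a natural isomorphism of cochain complexes $\mathrm{Hom}_A(P_\bullet,M)\cong \mathrm{Hom}_k((P_\bullet)_A,M)$. Because the quotient maps $P_n\to (P_n)_A$ are $B$-module maps, the bijection $f\mapsto \bar f$ intertwines the $B$-actions given by the twisted formula $(b\phi)(x)=\sum b_{(1)}\phi(S(b_{(2)})x)$, so the isomorphism is $C$-linear (a variant of the compatibility isomorphism (\ref{eq: comp hom tensor mod})). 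Then I would invoke that, as $k$ is a field, the functor $\mathrm{Hom}_k(-,M)$ is exact, so for the complex $D_\bullet=(P_\bullet)_A$ there is a natural isomorphism $H^n(\mathrm{Hom}_k(D_\bullet,M))\cong \mathrm{Hom}_k(H_n(D_\bullet),M)$ — restrict a cocycle to the cycles of $D_\bullet$; the usual $\mathrm{Ext}$-correction term vanishes over a field. A direct check shows this map carries the twisted $C$-action on cocycles to the twisted $C$-action on $\mathrm{Hom}_k(H_n(D_\bullet),M)$, using only that the differential of $D_\bullet$ and its cycle and boundary subspaces are $C$-submodules, so it too is $C$-linear. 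Composing the three isomorphisms yields $\mathrm{H}^n(A,M)\cong \mathrm{Hom}_k(\mathrm{H}_n(A,k),M)$ as $C$-modules.

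The computations themselves are routine; the part that needs care is the bookkeeping of $C$-equivariance — verifying that passing to $A$-coinvariants, applying the universal-coefficient isomorphism, and changing the resolution are all compatible with the antipode-twisted $C$-module structure on the Hom-spaces. All of this reduces to Lemma \ref{lemma: normal} (to see that $A^{+}P_n$ and $\mathrm{Hom}_k(P_n,M)^A$ are $B$-stable) together with the naturality underlying (\ref{eq: comp hom tensor mod}).
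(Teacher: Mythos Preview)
Your proof is correct and follows the same approach as the paper, which simply invokes the Universal Coefficient Theorem in one line. You have in fact unpacked the argument in more detail than the paper does---in particular the verification that the UCT isomorphism is $C$-equivariant, which the paper's proof leaves implicit.
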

\begin{proof}
This follows directly from the Universal Coefficient Theorem.
\end{proof}
Let $\mathscr{T}$ be the exact functor that turns  $C$-modules into  $B$-modules via the map $\pi$.
\begin{lemma}
If $N$ is a $B$-module and $M$ is a $C$-module, then we have a natural isomorphism
\[ \mathrm{Hom}_B(\mathscr{T}(M),N) \xrightarrow{\cong} \mathrm{Hom}_C(M,N^A), \]
which implies that the functor $\mathscr{T}$ is left adjoint to the functor $-^A$, and that
$-^B \cong -^C \circ -^A.$
\end{lemma}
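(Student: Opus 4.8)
The plan is to build the isomorphism by hand and then deduce the adjunction and the composite-functor formula formally. Before anything else I would record two consequences of the hypotheses. Since $(A,i)=\mathrm{KER}(\pi)$, the composite $\pi\circ i$ factors through the counit, i.e. $\pi(i(a))=\varepsilon_A(a)\,1_C$; hence $A$ acts trivially on $\mathscr{T}(M)$ for every $C$-module $M$, so $\mathscr{T}(M)^A=\mathscr{T}(M)$. Secondly, for any $B$-module $N$ the subspace $N^A$ is a $B$-submodule (by Lemma~\ref{lemma: normal}, as already noted in the text) on which $A$ acts trivially, so it carries a well-defined $C$-module structure: for $c\in C$ pick any $b\in B$ with $\pi(b)=c$ and put $c\cdot x:=b\cdot x$ for $x\in N^A$; independence of the choice of $b$ is exactly the statement that $i(A^{+})B=Bi(A^{+})$ annihilates $N^A$.

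Next I would define the two maps. Given $f\in\mathrm{Hom}_B(\mathscr{T}(M),N)$, for $a\in A^{+}$ and $m\in M$ one has $i(a)\cdot f(m)=f(i(a)\cdot m)=f(\varepsilon_A(a)m)=0$ (using that the $B$-action on $\mathscr{T}(M)$ is through $\pi$), so $f(M)\subseteq N^A$ and $f$ corestricts to a $k$-linear map $\bar f\colon M\to N^A$. That $\bar f$ is $C$-linear follows because, choosing $b\in B$ with $\pi(b)=c$, the element $c\cdot m$ of $M$ equals the image of $m$ under the action of $b$ on $\mathscr{T}(M)$, whence $\bar f(c\cdot m)=f(b\cdot m)=b\cdot f(m)=c\cdot\bar f(m)$, the last step by the definition of the descended action on $N^A$. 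Conversely, given $g\in\mathrm{Hom}_C(M,N^A)$, composing with the inclusion $N^A\hookrightarrow N$ yields $\tilde g\colon M\to N$ with $\tilde g(b\cdot m)=\pi(b)\cdot g(m)=b\cdot\tilde g(m)$, so $\tilde g\in\mathrm{Hom}_B(\mathscr{T}(M),N)$. The assignments $f\mapsto\bar f$ and $g\mapsto\tilde g$ are visibly mutually inverse and natural in both $M$ and $N$; being a natural bijection in both variables, it exhibits $\mathscr{T}$ as left adjoint to $-^A$.

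For the final assertion, observe that the trivial $B$-module $k$ is $\mathscr{T}(k)$ with $k$ the trivial $C$-module, since $\pi$ commutes with the counits; hence for every $B$-module $N$,
\[
N^{B}=\mathrm{Hom}_B(k,N)=\mathrm{Hom}_B(\mathscr{T}(k),N)\cong\mathrm{Hom}_C(k,N^A)=(N^A)^{C},
\]
naturally in $N$, which is the isomorphism $-^{B}\cong -^{C}\circ -^{A}$. I do not anticipate a real obstacle here: the only point needing care is the well-definedness of the $C$-module structure on $N^A$ and keeping straight, at each step, whether one is using the given $C$-action on $M$ or the $\pi$-pullback $B$-action on $\mathscr{T}(M)$; both issues are dispatched by the consequence of normality recorded in the first paragraph.
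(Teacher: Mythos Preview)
Your proof is correct. The paper itself does not supply a proof of this lemma at all; it is stated and then immediately used. What you have written is the standard verification, and every step checks out: the key points are that $\pi\circ i=\varepsilon_A$ forces $A$ to act trivially on $\mathscr{T}(M)$, and that normality of $i$ (via $i(A^{+})B=Bi(A^{+})$) makes the $C$-action on $N^A$ well defined. Your derivation of $-^{B}\cong(-^{C})\circ(-^{A})$ by specialising the adjunction to $M=k$ is also the natural way to do it. There is nothing to compare approaches against here; you have simply filled in what the paper omits.
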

Since the functor $-^A: B\mbox{-mod} \rightarrow C\mbox{-mod}$ is right adjoint to an exact functor, it follows that $-^A$ preserves injective modules. Furthermore, because $B$ is a free $A$-module (see Remark \ref{remark: some remarks}), $B$-resolutions can be used to compute the right derived functors of $-^A: A\mbox{-mod} \rightarrow k\mbox{-mod}$. This implies that we have natural isomorphisms of $C$-modules $\mathrm{R}^{\ast}(-^{A})(M)\cong \mathrm{H}^{\ast}(A,M)$ for every $B$-module $M$. These facts, together with the composition
$-^B \cong -^C \circ -^A$, give us a convergent first quadrant cohomological Grothendieck spectral sequence for every $B$-module $M$
\begin{equation}\label{eq: spec seq} E_2^{p,q}(M)= \mathrm{H}^p(C,\mathrm{H}^q(A,M)) \Rightarrow \mathrm{H}^{p+q}(B,M). \end{equation}

\begin{lemma}
If $A \xrightarrow{i} B \xrightarrow{\pi} C$ is a split short exact sequence of Hopf algebras and $M$ is a $B$-module such that $M^A=M$, then the differentials $d_r^{\ast,r-1}$ from $E^{\ast,r-1}_r(M)$, are zero for all $r\geq2$.
\end{lemma}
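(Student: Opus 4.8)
The plan is to use the section $\sigma\colon C\to B$ to produce a retraction of the bottom edge homomorphism of the spectral sequence \eqref{eq: spec seq}, and to deduce from this that the bottom row stabilises already at page $2$.

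\emph{Step 1: the coefficient module is inflated from $C$.} The condition $M^A=M$ says exactly that $A$ acts on $M$ through $\varepsilon_A$. Writing $B\cong A\rtimes C$, so that $\pi(a\otimes c)=\varepsilon_A(a)c$ and the section $\sigma$ is the canonical inclusion $j\colon c\mapsto 1\otimes c$, a direct computation with the smash-product multiplication (using $a\otimes c=(a\otimes 1)(1\otimes c)$) shows that for every $b\in B$ and $m\in M$ one has $b\cdot m=\varepsilon_A(a_k)$-weighted sums of $\sigma(c_k)\cdot m$, i.e. $b\cdot m$ depends only on $\pi(b)$ and equals $\pi(b)$ acting via $\sigma$. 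Hence, as a $B$-module, $M$ is the pullback along $\pi$ of the $C$-module $M'$ obtained by restricting $M$ along $\sigma$, and conversely $M'$ is that restriction. Consequently the inflation map $\pi^{\ast}\colon \mathrm{H}^{\ast}(C,M')\to \mathrm{H}^{\ast}(B,M)$ and the restriction map $\sigma^{\ast}\colon \mathrm{H}^{\ast}(B,M)\to \mathrm{H}^{\ast}(C,M')$ satisfy $\sigma^{\ast}\circ\pi^{\ast}=(\pi\circ\sigma)^{\ast}=\mathrm{Id}$, so $\pi^{\ast}$ is injective.

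\emph{Step 2: the edge homomorphism.} Since $\mathrm{H}^0(A,M)=M^A=M$, we have $E_2^{p,0}(M)=\mathrm{H}^p(C,M')$, and by the standard description of the edge maps of a Grothendieck spectral sequence the canonical homomorphism $E_2^{p,0}(M)\to \mathrm{H}^p(B,M)$ is precisely the inflation map $\pi^{\ast}$. Because the spectral sequence lies in the first quadrant, $d_r^{p,0}=0$ for all $r\geq 2$ (the target column is negative), so no differential ever leaves the bottom row; thus $E_{r+1}^{p,0}(M)=E_r^{p,0}(M)/\mathrm{Im}\,d_r^{p-r,r-1}$, and the edge map factors as the composite of the canonical surjections $E_2^{p,0}(M)\twoheadrightarrow E_3^{p,0}(M)\twoheadrightarrow\cdots\twoheadrightarrow E_\infty^{p,0}(M)$ followed by the inclusion $E_\infty^{p,0}(M)\hookrightarrow \mathrm{H}^p(B,M)$ of the top filtration step.

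\emph{Step 3: conclusion.} Combining the two steps, the injective map $\pi^{\ast}$ factors through the surjection $E_2^{p,0}(M)\twoheadrightarrow E_\infty^{p,0}(M)$; hence that surjection is an isomorphism, and therefore so is each intermediate surjection $E_r^{p,0}(M)\twoheadrightarrow E_{r+1}^{p,0}(M)$. It follows that $\mathrm{Im}\,d_r^{p-r,r-1}=0$ in $E_r^{p,0}(M)$ for every $p\geq 0$ and every $r\geq 2$; reindexing by $s=p-r$, this gives $d_r^{s,r-1}=0$ for all $s\geq 0$ and all $r\geq 2$, which is the assertion. The only point that is not entirely formal is the identification of the edge homomorphism with $\pi^{\ast}$ in Step 2; I expect this to be the main thing to pin down carefully, and if one prefers a self-contained argument one can instead map the given extension onto the trivial extension $k\to C\xrightarrow{\mathrm{Id}}C$ via $(\eta_A,\sigma,\mathrm{Id})$ — whose spectral sequence is concentrated on the bottom row with $E_\infty^{p,0}=\mathrm{H}^p(C,M')$ — and run the same retraction argument on the induced morphism of spectral sequences, which is induced by $\pi$ in one direction and by $\sigma$ in the other.
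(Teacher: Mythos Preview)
Your argument is correct and follows essentially the same route as the paper's proof: use the section $\sigma$ to see that the inflation $\pi^{\ast}\colon \mathrm{H}^{n}(C,M)\to \mathrm{H}^{n}(B,M)$ is split injective, identify this map with the bottom edge homomorphism, and conclude that all the surjections $E_{r}^{p,0}(M)\twoheadrightarrow E_{r+1}^{p,0}(M)$ are isomorphisms, so $d_{r}^{\ast,r-1}=0$. The paper compresses your Steps~1--3 into two sentences (``Factoriality then entails that the induced maps \ldots\ are injective'' and the edge factorisation), while you spell out why $M^{A}=M$ forces the $B$-action to factor through $\pi$ and flag the identification of the edge map with $\pi^{\ast}$ as the only nonformal point; but there is no real difference in method.
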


\begin{proof}
The section is a Hopf algebra map $\sigma: C \rightarrow B$, such that $\pi \circ \sigma=\mathrm{Id}$. Factoriality then entails that the induced maps
$ \mathrm{H}^n(C,M) \rightarrow \mathrm{H}^n(B,M)$ are injective for all $n$. Since these maps are given by the composition
\[  \mathrm{H}^n(C,M) = E_2^{n,0}(M) \twoheadrightarrow E_3^{n,0}(M) \twoheadrightarrow \ldots \twoheadrightarrow E_{\infty}^{n,0}(M) \hookrightarrow \mathrm{H}^n(B,M), \]
we conclude that $d_r^{\ast,r-1}=0$ for all $r\geq2$.
\end{proof}
Since $A,B$ and $C$ are Hopf algebras, their cohomology is endowed with a cup product.
Now, let us suppose that there is a pairing of $B$-modules $M \otimes_k N \rightarrow K.$
Then this, together with the cup product, induces a pairing of spectral sequences
\[ E_r^{p,q}(M)\otimes_k E_r^{l,m}(N) \rightarrow E_r^{p+l,q+m}(K): a\otimes b \mapsto a\cdot b. \]
Moreover, this pairing will satisfy
\[ d^{p+l,q+m}_r(ab)= d^{p,q}_r(a)\cdot b+ (-1)^{p+q}a \cdot d^{l,m}_r(b). \]
\section{Characteristic Classes} \label{sec: char classes}
Suppose $A \xrightarrow{i} B \xrightarrow{\pi} C$ is a split short exact sequence of Hopf algebras and consider the spectral sequence (\ref{eq: spec seq}).
\begin{definition} \rm
Let $t\geq 0, r\geq 2$. We define $\mathscr{M}^t_r(A,B)$ to be the class of $B$-modules $M$ such that $M^A=M$ and for which the differentials $d_p^{s,t}$ with source $E^{s,t}_{p}(M)$ are zero for all $s$ and all $2\leq p \leq r-1$. We say that the split extension is \emph{$(t,r)$-trivial} if $\mathrm{H}_{t}(A,k) \in \mathscr{M}^t_r(A,B)$.
\end{definition}
Now, suppose $M$ is a $B$-module such that $M^A=M$ and assume that the extension is $(t,r)$-trivial.
Then, by lemma \ref{prop: pairing}, we have a non-degenerate $C$-pairing
\[ \mathrm{H}^t(A,M)\otimes_k \mathrm{H}_t(A,k) \rightarrow M. \]
As stated earlier, this induces a spectral sequence pairing
\[ E_r^{p,q}(\mathrm{H}^t(A,M))\otimes_k E_r^{l,m}(\mathrm{H}_t(A,k)) \rightarrow E_r^{p+l,q+m}(M)  \ . \]
We also have isomorphisms
\begin{eqnarray*}
E^{0,t}_r(\mathrm{H}_t(A,k)) & = & E^{0,t}_2(\mathrm{H}_t(A,k)) \\
& = & \mathrm{H}^0(C,\mathrm{H}^t(A,\mathrm{H}_t(A,k))) \\
& \cong & \mathrm{H}^0(C,\mathrm{Hom}_k(\mathrm{H}_t(A,k),\mathrm{H}_t(A,k))) \\
& \cong & \mathrm{Hom}_k(\mathrm{H}_t(A,k),\mathrm{H}_t(A,k))^C \\
& \cong & \mathrm{Hom}_C(\mathrm{H}_t(A,k),\mathrm{H}_t(A,k)).
\end{eqnarray*}
\begin{definition}\rm Let $t\geq 0, r\geq 2$ and suppose that the extension is $(t,r)$-trivial.
A \emph{characteristic class} of the spectral sequence $(E_{\ast}(\mathrm{H}_t(A,k)),d_{\ast})$ is defined as
\[ v_r^t(A):= d^{0,t}_r([id^t]), \]
where $[id^t]$ is the image in $E^{0,t}_r(\mathrm{H}_t(A,k))$ of the identity map in $\mathrm{Hom}_C(\mathrm{H}_t(A,k),\mathrm{H}_t(A,k))$ under the isomorphisms above.
\end{definition}

If $M$ is a $B$-module such that $M^A=M$, it turns out that these characteristic classes can be seen as obstructions to the vanishing of differentials in $(E_{\ast}(M),d_{\ast})$.
\begin{theorem}\label{th: sah} Let $t\geq 0, r\geq 2$ and suppose we have a $(t,r)$-trivial split short exact sequence of Hopf algebras
$A \rightarrow B \rightarrow C.$
Then the following holds.
\begin{itemize}
\item[(a)] For all $s\geq0$ and for all $M \in \mathscr{M}^t_r(A,B)$, there is a canonical surjective homomorphism
\[ \theta: E_r^{s,0}(\mathrm{H}^t(A,M)) \rightarrow E_r^{s,t}(M).\]
\item[(b)] The characteristic class $v_r^t \in E_r^{r,t-r+1}(\mathrm{H}_t(A,k))$  has the property
\[ d_r^{s,t}(x)= (-1)^s y\cdot v_r^t \] $\forall s \geq 0,  \forall x \in E_r^{s,t}(M),\forall M \in \mathscr{M}^t_r(A,B) \ \mbox{and} \ \forall y \in  E_r^{s,0}(\mathrm{H}^t(A,M)) \ \mbox{with} \ \theta(y)=x.$
\item[(c)] $v_r^t$ is completely determined by the previous property.
\item[]
\item[(d)]  Suppose we have a Hopf algebra $D$ together with a Hopf algebra map $\rho: D \rightarrow C$ that turns $A$ into a $D$-module bialgebra. Assume that $A \rtimes D$, given the standard coalgebra structure, is a Hopf algebra and that the split short exact sequence associated to $A \rtimes D$ is also $(t,r)$-trivial (denote its characteristic classes by $w_r^t)$. Then $v_r^t$ maps to $w_r^t$ under the map induced by $\rho$ on the spectral sequences. In particular,  $v_2^t$ maps to $w_2^t$.
\end{itemize}
\end{theorem}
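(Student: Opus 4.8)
The plan is to establish (a)--(d) in order, with (a) the technical heart of the argument. For (a), the non-degenerate $C$-pairing $\mathrm{H}^t(A,M)\otimes_k\mathrm{H}_t(A,k)\to M$ of Lemma \ref{prop: pairing}, combined with the cup products, yields a pairing of spectral sequences $E_r^{p,q}(\mathrm{H}^t(A,M))\otimes_k E_r^{l,m}(\mathrm{H}_t(A,k))\to E_r^{p+l,q+m}(M)$, and I would simply \emph{define} $\theta$ to be cup product with $[id^t]\in E_r^{0,t}(\mathrm{H}_t(A,k))$; this makes sense on page $r$ because $(t,r)$-triviality forces $d_p^{0,t}([id^t])=0$ for $2\le p\le r-1$, so $[id^t]$ survives to $E_r^{0,t}$, and canonicity of $\theta$ is immediate since $[id^t]$ is canonical. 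The real content is surjectivity. Since $A$ acts trivially on $\mathrm{H}^t(A,M)$ we have $\mathrm{H}^0(A,\mathrm{H}^t(A,M))=\mathrm{H}^t(A,M)$ and hence $E_2^{s,0}(\mathrm{H}^t(A,M))=\mathrm{H}^s(C,\mathrm{H}^t(A,M))=E_2^{s,t}(M)$, and chasing the cup product $\mathrm{H}^0(A,-)\otimes_k\mathrm{H}^t(A,-)\to\mathrm{H}^t(A,-)$ through the Universal Coefficient isomorphism of Lemma \ref{prop: pairing} shows that on page $2$ the map $\theta$ is induced by the identity endomorphism of the coefficient module $\mathrm{H}^t(A,M)$, hence is an isomorphism. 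To reach page $r$, observe that the bottom row $E_p^{s,0}(\mathrm{H}^t(A,M))$ is always a quotient of $E_2^{s,0}$ (no differential can leave a bottom row), while because $M\in\mathscr{M}^t_r(A,B)$ the differentials $d_p^{s,t}(M)$ vanish for $2\le p\le r-1$, so $E_p^{s,t}(M)$ is likewise a quotient of $E_2^{s,t}(M)$. As the spectral sequence pairing on $E_{p+1}$ is induced from that on $E_p$ and $[id^t]$ is a $d_p$-cocycle for $p\le r-1$, these quotient maps fit the page-$2$ and page-$r$ versions of $\theta$ into a commutative square with surjective verticals; since the top arrow is an isomorphism, $\theta$ is surjective on page $r$.

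\emph{Parts (b) and (c).} For (b), write $x=\theta(y)=y\cdot[id^t]$ with $y\in E_r^{s,0}(\mathrm{H}^t(A,M))$ and apply the Leibniz rule:
\[ d_r^{s,t}(x)=d_r^{s,0}(y)\cdot[id^t]+(-1)^s\,y\cdot d_r^{0,t}([id^t]). \]
The first summand vanishes because $d_r^{s,0}(y)$ lands in $E_r^{s+r,1-r}(\mathrm{H}^t(A,M))=0$ (negative second index), and $d_r^{0,t}([id^t])=v_r^t$ by definition; in particular $v_r^t\in E_r^{r,t-r+1}(\mathrm{H}_t(A,k))$. For (c), take $M=\mathrm{H}_t(A,k)$, which lies in $\mathscr{M}^t_r(A,B)$ by $(t,r)$-triviality. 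Under $\mathrm{H}^t(A,\mathrm{H}_t(A,k))\cong\mathrm{Hom}_k(\mathrm{H}_t(A,k),\mathrm{H}_t(A,k))$, the class $y_0\in E_r^{0,0}(\mathrm{H}^t(A,\mathrm{H}_t(A,k)))$ corresponding to the identity satisfies $\theta(y_0)=[id^t]$ (by the page-$2$ computation above, noting $E_r^{0,0}=E_2^{0,0}$), and cupping with $y_0$ is induced by the identity map on the coefficient module $\mathrm{H}_t(A,k)$, hence is the identity endomorphism of the entire spectral sequence $E_{\ast}(\mathrm{H}_t(A,k))$. Thus if $v$ and $v'$ both have the property in (b), evaluating at $M=\mathrm{H}_t(A,k)$, $s=0$, $y=y_0$ gives $v=y_0\cdot v=y_0\cdot v'=v'$.

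\emph{Part (d).} By Remark \ref{remark: some remarks}, $A\to A\rtimes D\to D$ is a split short exact sequence of Hopf algebras, and $\mathrm{id}_A\rtimes\rho\colon A\rtimes D\to A\rtimes C$ is a Hopf algebra map fitting into a morphism of split extensions covering $\rho$ (the $D$-module bialgebra structure on $A$ is the restriction along $\rho$ of its $C$-module structure, and the sections are compatible). Functoriality of the Grothendieck spectral sequence then gives a morphism $E_{\ast}(\mathrm{H}_t(A,k))\to{}^{D}E_{\ast}(\mathrm{H}_t(A,k))$ commuting with all differentials, which on $E_2^{0,t}$ is the restriction map $\mathrm{H}^0(C,\mathrm{H}^t(A,\mathrm{H}_t(A,k)))\to\mathrm{H}^0(D,\mathrm{H}^t(A,\mathrm{H}_t(A,k)))$; it sends the $C$-equivariant identity to the $D$-equivariant identity, i.e.\ $[id^t]\mapsto[\widetilde{id^t}]$, so applying $d_r^{0,t}$ and commuting the differential past the morphism yields $v_r^t\mapsto w_r^t$, with the case $r=2$ being the final sentence. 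I expect the main obstacle to be part (a): showing that $\theta$ reduces to the identity on page $2$ requires carefully matching the cup product against the Universal Coefficient isomorphism, and the bootstrap to page $r$ rests on the standard but delicate compatibility of the spectral sequence pairing with passage from one page to the next. The remaining parts are then essentially bookkeeping with the Leibniz rule and with the naturality of the spectral sequence.
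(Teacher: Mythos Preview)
Your proof is correct and takes essentially the same approach as the paper. The only organizational difference is that you define $\theta$ as cup product with $[id^t]$ from the start and deduce surjectivity from a commutative square of quotient maps, whereas the paper first constructs $\theta$ as the composite of the page-$2$ identification $E_2^{s,0}(\mathrm{H}^t(A,M))\cong E_2^{s,t}(M)$ with the quotient $E_2^{s,t}(M)\twoheadrightarrow E_r^{s,t}(M)$ (invoking the splitting to get $E_r^{s,0}=E_2^{s,0}$, a slightly stronger fact than you use) and only verifies $\theta(y)=y\cdot[id^t]$ inside the proof of (b); parts (b)--(d) are then argued identically.
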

\begin{proof}
Since $\mathrm{H}^{t}(A,M)^A=\mathrm{H}^{t}(A,M)$, we have a canonical isomorphism
\begin{eqnarray*}
E^{s,0}_2(\mathrm{H}^t(A,M)) & \cong & \mathrm{H}^s(C,\mathrm{H}^t(A,M)) \\
& \cong & E^{s,t}_2(M).
\end{eqnarray*}
The splitting guarantees that $E^{s,0}_r(\mathrm{H}^t(A,M))=E^{s,0}_2(\mathrm{H}^t(A,M))$. In addition, if $M \in \mathscr{M}^t_r(A,B)$, then it follows that $E^{s,t}_r(M)$ is a quotient module of $E^{s,t}_2(M)$. Combining these observations, we find a canonical surjection
\[ \theta: E_r^{s,0}(\mathrm{H}^t(A,M)) \rightarrow E_r^{s,t}(M),\]
proving (a). \\
\indent To prove (b), we fix  $s\geq0$, $M \in \mathscr{M}^t_r(A,C)$ and $x \in E_r^{s,t}(M)$. We have the following commutative diagram
\[
\xymatrix{
  E_r^{s,0}(\mathrm{H}^t(A,M))   \otimes_k   E_r^{0,t}(\mathrm{H}_t(A,k)) \ar[r] &  E_r^{s,t}(M) \\
  E_2^{s,0}(\mathrm{H}^t(A,M))  \otimes_k   E_2^{0,t}(\mathrm{H}_t(A,k))\ar[u]^{\cong} \ar[r] &  E_2^{s,t}(M) \ar@{->>}[u]
}\]
Recall that we can take $[id^t] \in  E_2^{0,t}(\mathrm{H}_t(A,k))$. We have that $z \cdot [id^t]=z$ for all $z \in  E_2^{s,0}(\mathrm{H}^t(A,M))= E_2^{s,t}(M)$.
Indeed, if we fix $[id^t] \in  E_2^{0,t}(\mathrm{H}_t(A,k))$, then the map $p:E_2^{s,0}(\mathrm{H}^t(A,M))=\mathrm{H}^s(C,\mathrm{H}^t(A,M)) \rightarrow E_2^{s,t}(M)=\mathrm{H}^s(C,\mathrm{H}^t(A,M)): z \mapsto z \cdot [id^t]$ is a map induced by the $C$-module homomorphism $\mathrm{H}^t(A,M)=\mathrm{Hom}_k(\mathrm{H}_t(A,k),M) \rightarrow  \mathrm{H}^t(A,M)=\mathrm{Hom}_k(\mathrm{H}_t(A,k),M): f \mapsto f \circ id^t $ where $id^t$ is the identity morphism in $\mathrm{Hom}_k(\mathrm{H}_t(A,k),\mathrm{H}_t(A,k))$. Since this $C$-module homomorphism is clearly the identity morphism, the induced map $p$ on cohomology is also the identity map.

Now, since $\theta$ is surjective, there is an element $y \in  E_r^{s,0}(\mathrm{H}^t(A,M))$ such that $\theta(y)=x$.
The commutativity of the diagram then implies that $y \cdot [id^t] = x$. Now, by using the product formula, we find
\begin{eqnarray*}
d_r^{s,t}(x) & = & d_r^{s,t}(y \cdot [id^t]) \\
& = & d_r^{s,0}(y)\cdot[id^t] + (-1)^sy \cdot d_r^{0,t}([id^t]) \\
& = & (-1)^s y \cdot v_r^t \ ,
\end{eqnarray*}
which proves (b).
This last equality follows from the definition of $v_r^t$ and the fact that $d_r^{s,0}$ lands in $E_r^{s+r,1-r}(M)$, which is zero.

To prove (c), we consider the special case $s=0$ and $M=\mathrm{H}_t(A,k)$. Notice that $\theta$ becomes the identity map (under identifications). Now, suppose $u_r^t$ also satisfies property (b) and set $x=[id^t] \in  E_r^{0,t}(M)$. It follows that
\[ v_r^t = d_r^{0,t}(x)= \theta^{-1}(x) \cdot u_r^t=[id^t] \cdot u_r^t.  \]
Furthermore, since our multiplication amounts to composition with the identity map in $\mathrm{Hom}_C(\mathrm{H}_t(A,k),\mathrm{H}_t(A,k))$ we find $v_r^t = u_r^t$,
proving (c).

Finally, to prove (d), we let $({'E}_{\ast}, {'d}_{\ast})$ be the spectral sequence associated to $A \rtimes D$. Note that the map induced by $\rho$ on the spectral sequences maps $[id^t] \in E_2^{0,t}(\mathrm{H}_t(A,k))=\mathrm{Hom}_C(\mathrm{H}_t(A,k),\mathrm{H}_t(A,k))$ to $[id^t] \in {'E}_2^{0,t}(\mathrm{H}_t(A,k))=\mathrm{Hom}_D(\mathrm{H}_t(A,k),\mathrm{H}_t(A,k))$. Since the map induced by $\rho$ on the spectral sequences commutes with the differentials and $v_r^t= d^{0,t}_r([id^t]), w_r^t= {'d}^{0,t}_r([id^t])$, we conclude that  $v_r^t$ is mapped to $w_r^t$.
\end{proof}
Using this theorem inductively, one can easily see that $\mathscr{M}_r^t(A,B)=C\mbox{-mod}$ if and only if the extension is $(t,r)$-trivial.
\begin{corollary} Let $t\geq 0, r\geq 2$.
Then $\mathscr{M}_r^t(A,B)=C\mbox{-mod}$ if and only if the edge differentials
\[ d_m^{0,t}: E_m^{0,t}(\mathrm{H}_t(A,k)) \rightarrow E_m^{m,t-m+1}(\mathrm{H}_t(A,k)) \]
are zero for all $2 \leq m < r$. So, $\mathscr{M}_r^t(A,B)=C\mbox{-mod}$  if and only if $A \rtimes C$ is $(t,r)$-trivial.
\end{corollary}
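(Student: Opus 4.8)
The only implication among the stated equivalences that carries content is ``if $d^{0,t}_m$ vanishes on $E^{0,t}_m(\mathrm{H}_t(A,k))$ for all $2\leq m<r$, then $\mathscr{M}^t_r(A,B)=C\mbox{-mod}$''. Indeed, the reverse implication and the link with $(t,r)$-triviality of $A\rtimes C$ are immediate unwindings: $\mathscr{M}^t_r(A,B)=C\mbox{-mod}$ forces $\mathrm{H}_t(A,k)\in\mathscr{M}^t_r(A,B)$ (since $\mathrm{H}_t(A,k)$, carrying the trivial $A$-action, lies in $C\mbox{-mod}$), and, via Molnar's Hopf algebra isomorphism $B\cong A\rtimes C$ — which is compatible with the inclusions of $A$, the projections to $C$ and the $C$-module structure on $A$ — this is exactly the $(t,r)$-triviality of $A\rtimes C$; and from $\mathrm{H}_t(A,k)\in\mathscr{M}^t_r(A,B)$ one reads off, taking $s=0$ in the definition of $\mathscr{M}^t_r$, that the edge differentials $d^{0,t}_m$ vanish for $2\leq m<r$. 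So the whole corollary reduces to the displayed implication.

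I would prove that implication by induction on $r$, with Theorem~\ref{th: sah}(b) as the engine. For $r=2$ the condition defining $\mathscr{M}^t_2(A,B)$ concerns differentials $d^{s,t}_p$ with $2\leq p\leq 1$ and is therefore vacuous, so $\mathscr{M}^t_2(A,B)=C\mbox{-mod}$ unconditionally. Assume now $r\geq 3$ and the implication for $r-1$, and suppose $d^{0,t}_m$ vanishes on $E^{0,t}_m(\mathrm{H}_t(A,k))$ for $2\leq m<r$. Restricting to $2\leq m<r-1$ and applying the inductive hypothesis gives $\mathscr{M}^t_{r-1}(A,B)=C\mbox{-mod}$; in particular the split extension is $(t,r-1)$-trivial, so $E^{0,t}_{r-1}(\mathrm{H}_t(A,k))=E^{0,t}_2(\mathrm{H}_t(A,k))$, the characteristic class $v^t_{r-1}=d^{0,t}_{r-1}([id^t])$ is defined, and it vanishes by the hypothesis in the case $m=r-1$. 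Now fix $M\in C\mbox{-mod}=\mathscr{M}^t_{r-1}(A,B)$ and $s\geq 0$: by Theorem~\ref{th: sah}(a), applied with $r-1$ in place of $r$, every $x\in E^{s,t}_{r-1}(M)$ is $\theta(y)$ for some $y\in E^{s,0}_{r-1}(\mathrm{H}^t(A,M))$, and Theorem~\ref{th: sah}(b) then yields $d^{s,t}_{r-1}(x)=(-1)^s\,y\cdot v^t_{r-1}=0$. Hence $d^{s,t}_{r-1}$ vanishes on $E^{s,t}_{r-1}(M)$ for every $s$, and since also $M\in\mathscr{M}^t_{r-1}(A,B)$ we conclude $M\in\mathscr{M}^t_r(A,B)$. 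As $M$ was arbitrary, $\mathscr{M}^t_r(A,B)=C\mbox{-mod}$, completing the induction.

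I expect the one point needing care to be organizational rather than mathematical: the characteristic class $v^t_m$ is only defined once $(t,m)$-triviality is known, so the induction must be arranged so that $(t,m)$-triviality (equivalently $\mathscr{M}^t_m(A,B)=C\mbox{-mod}$) is available \emph{before} $v^t_m$ and Theorem~\ref{th: sah}(b) are invoked — which is precisely why I would carry the induction on the statement ``$\mathscr{M}^t_r(A,B)=C\mbox{-mod}$'' rather than directly on the vanishing of the edge differentials. Beyond this, every step is a routine application of Theorem~\ref{th: sah} and an unwinding of the definitions of $\mathscr{M}^t_r(A,B)$, $(t,r)$-triviality and $v^t_r$.
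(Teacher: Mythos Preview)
Your proof is correct and follows exactly the approach the paper indicates: the paper itself offers no detailed proof of this corollary, only the sentence ``Using this theorem inductively, one can easily see that $\mathscr{M}_r^t(A,B)=C\mbox{-mod}$ if and only if the extension is $(t,r)$-trivial,'' and you have faithfully unwound that induction using Theorem~\ref{th: sah}(a),(b). Your care in noting that $(t,m)$-triviality must be established before $v^t_m$ is invoked is exactly the right organizational point.
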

\begin{corollary}
The spectral sequence $(E_{\ast}(M),d_{\ast})$ collapses at the second page for all coefficients $M \in C\mbox{-mod}$ if and only if the edge differentials
\[ d_m^{0,t}: E_m^{0,t}(\mathrm{H}_t(A,k)) \rightarrow E_m^{m,t-m+1}(\mathrm{H}_t(A,k)) \]
are zero for all $t\geq 0$ and $m \geq 2$.
Said differently, the spectral sequence will collapse at the second page for all coefficients $M \in C\mbox{-mod}$  if and only if it collapses with coefficients $\mathrm{H}_t(A,k)$, for all $t\geq 0$.
\end{corollary}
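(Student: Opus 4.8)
The plan is to deduce this corollary from the preceding corollary (which identifies when $\mathscr{M}_r^t(A,B)=C\mbox{-mod}$) by a limiting argument over the page index, using that $\mathrm{H}_t(A,k)$ itself lies in $C\mbox{-mod}$.

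First I would dispose of the easy implication. If $(E_{\ast}(M),d_{\ast})$ collapses at the second page for every $M\in C\mbox{-mod}$, then in particular it collapses for $M=\mathrm{H}_t(A,k)$ (which belongs to $C\mbox{-mod}$ since $A$ acts trivially on it), so all of its differentials vanish; a fortiori the edge differentials $d_m^{0,t}: E_m^{0,t}(\mathrm{H}_t(A,k))\to E_m^{m,t-m+1}(\mathrm{H}_t(A,k))$ are zero for all $t\geq0$ and all $m\geq2$.

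For the converse, assume the edge differentials $d_m^{0,t}$ on $\mathrm{H}_t(A,k)$ vanish for all $t\geq0$ and all $m\geq2$. Fixing $t$, the hypothesis makes the condition ``$d_m^{0,t}=0$ for all $2\leq m<r$'' hold for every $r\geq2$, so the preceding corollary gives $\mathscr{M}_r^t(A,B)=C\mbox{-mod}$ for every $r\geq2$ and every $t\geq0$. Now fix an arbitrary $M\in C\mbox{-mod}$ and arbitrary integers $p\geq2$, $q\geq0$, and apply this with $t=q$, $r=p+1$: then $M\in\mathscr{M}_{p+1}^q(A,B)$, which by the definition of $\mathscr{M}_{p+1}^q(A,B)$ forces the differentials $d_{p'}^{s,q}$ with source $E_{p'}^{s,q}(M)$ to vanish for all $s$ and all $2\leq p'\leq p$; in particular $d_p^{s,q}$ is zero on $E_p^{s,q}(M)$ for all $s$. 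Since $p$ and $q$ were arbitrary, every differential $d_p$ with $p\geq2$ of $(E_{\ast}(M),d_{\ast})$ vanishes, i.e. the spectral sequence collapses at the second page; and since $M$ was arbitrary, the first equivalence follows. For the ``said differently'' reformulation, collapse with coefficients $\mathrm{H}_t(A,k)$ for every $t$ trivially yields the vanishing of all the edge differentials, and conversely the vanishing of all the edge differentials yields, by what was just proved applied to the coefficient module $\mathrm{H}_t(A,k)$, collapse in those coefficients.

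I do not expect a serious obstacle: the real content is already in Theorem \ref{th: sah} and the preceding corollary, and the present statement is essentially bookkeeping with the page and coefficient indices. Were one to argue from scratch instead, the crux would be the inductive step behind the preceding corollary, namely that $(q,r)$-triviality forces $d_{r-1}^{s,q}$ to vanish on $E_{r-1}^{s,q}(M)$ for every $M\in C\mbox{-mod}$; this is exactly where part (b) of Theorem \ref{th: sah} is used to write $d_{r-1}^{s,q}(x)=(-1)^s y\cdot v_{r-1}^q$, together with the observation that the characteristic class $v_{r-1}^q=d_{r-1}^{0,q}([id^q])$ is an edge differential evaluated on $[id^q]$ and hence zero by hypothesis once one knows $[id^q]$ survives to the $(r-1)$-st page.
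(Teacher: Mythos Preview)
Your proposal is correct and is exactly the intended argument: the paper does not even write out a proof of this corollary, treating it as an immediate consequence of the preceding corollary (characterizing when $\mathscr{M}_r^t(A,B)=C\mbox{-mod}$), and your bookkeeping over $t$ and $r$ is precisely how one unpacks that implication. Your final paragraph, explaining how Theorem~\ref{th: sah}(b) underlies the preceding corollary, is accurate but superfluous here since you are invoking that corollary as a black box.
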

\section{A decomposition Theorem} \label{sec: decomp}
Let $A_1,A_2$ and $C$ be Hopf algebras such that $A_1$ and $A_2$ are $C$-module bialgebras and assume that $A_1 \rtimes C$ and $A_2 \rtimes C$ are Hopf algebras with the standard coalgebra structure. Then $A= A_1 \otimes A_2$ naturally becomes a $C$-module bialgebra and one can check that $A \rtimes C$ is also a Hopf algebra with the standard coalgebra structure. It follows that we have three split short exact sequence of Hopf algebras; namely
\begin{equation} \label{eq: decomp_i}
A_i \rightarrow A_i \rtimes C \rightarrow C
\end{equation} for $i=1,2$ and
\begin{equation} \label{eq: decomp}
A \rightarrow A \rtimes C \rightarrow C .
\end{equation}
 We denote the spectral sequence associated to (\ref{eq: decomp_i}) by $({}^{i}E_{\ast},{}^{i}d_{\ast})$, and the spectral sequence associated to (\ref{eq: decomp}) by $(E_{\ast},d_{\ast})$.
It follows from the K\"{u}nneth formula that, for all $i$ and $j$, we have a $C$-pairing
\[ \mathrm{H}_i(A_1,k)\otimes \mathrm{H}_j(A_2,k) \rightarrow \mathrm{H}_{i+j}(A,k). \]
This induces a spectral sequence pairing
\[ E^{p,q}_r(\mathrm{H}_i(A_1,k)) \otimes_k E^{l,m}_r(\mathrm{H}_j(A_2,k)) \rightarrow E^{p+l,q+m}_r(\mathrm{H}_{i+j}(A,k)).\] Note that the natural maps of $A$ onto $A_1$ and $A_2$ induce two homomorphisms of spectral sequences
\begin{align*}
{}^{1}\Phi &: {}^{1}E_r(\mathrm{H}_i(A_1,k)) \rightarrow E_r(\mathrm{H}_i(A_1,k)), \\
{}^{2}\Phi &: {}^{2}E_r(\mathrm{H}_j(A_2,k)) \rightarrow E_r(\mathrm{H}_j(A_2,k)).
\end{align*}
Together with the pairing, these entail the morphism
\[ P_{i,j}: {}^{1}E^{p,q}_r(\mathrm{H}_i(A_1,k)) \otimes_k {}^{2}E^{l,m}_r(\mathrm{H}_j(A_2,k)) \rightarrow E^{p+l,q+m}_r(\mathrm{H}_{i+j}(A,k)): x\otimes y \mapsto {}^{1}\Phi(x){}^{2}\Phi(y)\]
for all $i$ and $j$.
\begin{lemma}\label{lemma: product formula} Let $x \in {}^{1}E^{p,q}_r(\mathrm{H}_i(A_1,k))$ and $y \in {}^{2}E^{l,m}_r(\mathrm{H}_j(A_2,k))$. Then
\[ d_r^{p+l,q+m}(P_{i,j}(x\otimes y)) = P_{i,j}({}^{1}d_r^{p,q}(x)\otimes y)+ (-1)^{p+q} P_{i,j}(x\otimes {}^{2}d_r^{l,m}(y)). \]
\end{lemma}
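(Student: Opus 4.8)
The plan is to reduce the claimed Leibniz-type formula for $d_r$ on $P_{i,j}(x\otimes y)$ to the two ingredients already assembled in Section \ref{sec: decomp}: the fact that ${}^{1}\Phi$ and ${}^{2}\Phi$ are morphisms of spectral sequences (hence commute with all differentials), and the Leibniz rule for the spectral sequence pairing $E_r^{p,q}(\mathrm{H}_i(A_1,k))\otimes_k E_r^{l,m}(\mathrm{H}_j(A_2,k))\to E_r^{p+l,q+m}(\mathrm{H}_{i+j}(A,k))$ recorded at the end of Section \ref{sec: cohom}. Concretely, by the very definition of $P_{i,j}$ we have $P_{i,j}(x\otimes y)={}^{1}\Phi(x)\cdot {}^{2}\Phi(y)$, a product of classes in $E_r^{p,q}(\mathrm{H}_i(A_1,k))$ and $E_r^{l,m}(\mathrm{H}_j(A_2,k))$ living in the spectral sequence $E_\ast$ for the extension $A\to A\rtimes C\to C$. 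So the first step is simply to apply the pairing Leibniz formula to this product:
\[
d_r^{p+l,q+m}\big({}^{1}\Phi(x)\cdot{}^{2}\Phi(y)\big)= d_r^{p,q}\big({}^{1}\Phi(x)\big)\cdot{}^{2}\Phi(y)+(-1)^{p+q}\,{}^{1}\Phi(x)\cdot d_r^{l,m}\big({}^{2}\Phi(y)\big).
\]

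The second step is to push the differentials through $\Phi$. Since ${}^{1}\Phi$ is a morphism of spectral sequences, $d_r^{p,q}({}^{1}\Phi(x))={}^{1}\Phi({}^{1}d_r^{p,q}(x))$, and likewise $d_r^{l,m}({}^{2}\Phi(y))={}^{2}\Phi({}^{2}d_r^{l,m}(y))$. Substituting these and reading the products back through the definition of $P$ gives exactly
\[
d_r^{p+l,q+m}(P_{i,j}(x\otimes y))=P_{i,j}\big({}^{1}d_r^{p,q}(x)\otimes y\big)+(-1)^{p+q}P_{i,j}\big(x\otimes {}^{2}d_r^{l,m}(y)\big),
\]
as desired. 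The only genuine verification needed is that the pairing used in defining $P_{i,j}$ is compatible with the two pairings used to define ${}^1\Phi$ and ${}^2\Phi$ as module maps — i.e.\ that forming the product ${}^{1}\Phi(x)\cdot{}^{2}\Phi(y)$ really is the image of $x\otimes y$ under one single spectral-sequence pairing to which the sign-Leibniz rule applies verbatim — but this is built into the construction: all three pairings are induced by the cup product on $\mathrm{H}^\ast(C,-)$ together with the $C$-module pairings coming from the K\"unneth isomorphism $\mathrm{H}_i(A_1,k)\otimes\mathrm{H}_j(A_2,k)\to\mathrm{H}_{i+j}(A,k)$, and these are associative in the evident way.

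The main obstacle, such as it is, is bookkeeping rather than mathematics: one must make sure the sign $(-1)^{p+q}$ that appears is indexed by the bidegree $(p,q)$ of the \emph{first} factor $x$ (equivalently of ${}^{1}\Phi(x)$) and not by something involving $i$ or $j$, and that when ${}^{1}d_r^{p,q}(x)$ lands in bidegree $(p+r,q-r+1)$ the subsequent application of $P_{i,j}$ is to the correctly reindexed pairing $ {}^{1}E_r^{p+r,q-r+1}(\mathrm{H}_i(A_1,k))\otimes_k{}^{2}E_r^{l,m}(\mathrm{H}_j(A_2,k))\to E_r^{p+l+r,q+m-r+1}(\mathrm{H}_{i+j}(A,k))$. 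Since the pairing Leibniz rule from Section \ref{sec: cohom} already carries precisely this sign convention and $\Phi$ is degree-preserving, no discrepancy arises, and the proof is complete.
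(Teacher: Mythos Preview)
Your proposal is correct and follows exactly the same approach as the paper's proof: apply the Leibniz rule for the spectral sequence pairing to ${}^{1}\Phi(x)\cdot{}^{2}\Phi(y)$, then use that ${}^{1}\Phi$ and ${}^{2}\Phi$ commute with the differentials. The paper states this in a single sentence; your version simply unpacks the two steps and verifies the sign bookkeeping.
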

\begin{proof}
This follows immediately from the product formula for pairings and the fact that ${}^{1}\Phi$ and ${}^{2}\Phi$ commute with the differentials.
\end{proof}
We can now derive the following.
\begin{theorem}\label{th: decomp} Let $t\geq 0$ and $r\geq 2$.
Suppose the respective characteristic classes ${}^{1}v_p^i$ and ${}^{2}v_p^j$, of ${}^{1}E_{\ast}(\mathrm{H}_i(A_1,k))$ and  ${}^{2}E_{\ast}(\mathrm{H}_j(A_2,k))$, are zero for all $i,j\leq t$ and $2\leq p\leq r-1$. Then the split short exact sequence
$A \rightarrow A \rtimes C \rightarrow C$
is $(t,r)$-trivial. Moreover, we have a decomposition formula
\[ v_r^t=\sum_{i+j=m} \Big( P_{i,j}({}^{1}v_r^i\otimes [{}^{2}id^j])+ (-1)^i P_{i,j}([{}^{1}id^i]\otimes {}^{2}v_r^j)\Big). \]
\end{theorem}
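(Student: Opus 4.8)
The plan is to combine the K\"unneth theorem with an induction on the page number $p$, the engine of the induction being the product formula of Lemma~\ref{lemma: product formula} together with the corollary to Theorem~\ref{th: sah} equating $(t,p)$-triviality with the vanishing of the edge differentials $d_m^{0,t}$ for $m<p$.

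First I would fix the K\"unneth bookkeeping. Since $A=A_1\otimes A_2$ as $C$-module bialgebras, for each $n$ the K\"unneth theorem gives a canonical $C$-module decomposition $\mathrm{H}_n(A,k)\cong\bigoplus_{i+j=n}\mathrm{H}_i(A_1,k)\otimes_k\mathrm{H}_j(A_2,k)$; letting $n$ vary, this is an isomorphism of graded Hopf algebras $\mathrm{H}_\ast(A,k)\cong\mathrm{H}_\ast(A_1,k)\otimes_k\mathrm{H}_\ast(A_2,k)$, where the coalgebra structures are those induced by the comultiplications and used to form the cup products. Under this identification, the Hopf algebra quotients $\mathrm{Id}\otimes\varepsilon_{A_2}\colon A\to A_1$ and $\varepsilon_{A_1}\otimes\mathrm{Id}\colon A\to A_2$ induce on homology the projections $\pi_1^i\colon\mathrm{H}_i(A,k)\to\mathrm{H}_i(A_1,k)$ and $\pi_2^j\colon\mathrm{H}_j(A,k)\to\mathrm{H}_j(A_2,k)$ onto the K\"unneth summands indexed by $(i,0)$ and $(0,j)$; these are precisely the homology maps through which ${}^{1}\Phi$ and ${}^{2}\Phi$ are built. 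Unwinding the edge isomorphism $E_2^{0,s}(\mathrm{H}_s(A_l,k))\cong\mathrm{Hom}_C(\mathrm{H}_s(A,k),\mathrm{H}_s(A_l,k))$ for the sequence $(\ref{eq: decomp})$ together with the analogous isomorphism for $(\ref{eq: decomp_i})$, one obtains ${}^{l}\Phi([{}^{l}id^s])=\pi_l^s$.

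The crux is then the identity
\[ [id^t]=\sum_{i+j=t}P_{i,j}\big([{}^{1}id^i]\otimes[{}^{2}id^j]\big)\qquad\text{in }E_2^{0,t}(\mathrm{H}_t(A,k)). \]
To prove it, identify $E_2^{0,t}(\mathrm{H}_t(A,k))\cong\mathrm{Hom}_C(\mathrm{H}_t(A,k),\mathrm{H}_t(A,k))$ as in Section~\ref{sec: char classes} and evaluate the $(i,j)$-term on the right. Using the previous paragraph, Lemma~\ref{prop: pairing}, and the fact that under $\mathrm{H}^s(A,M)\cong\mathrm{Hom}_k(\mathrm{H}_s(A,k),M)$ the spectral-sequence pairing in $C$-degree $0$ corresponds to the comultiplication of $\mathrm{H}_\ast(A,k)$, one sees that $P_{i,j}([{}^{1}id^i]\otimes[{}^{2}id^j])$ is the composite
\[ \mathrm{H}_t(A,k)\xrightarrow{\Delta_{i,j}}\mathrm{H}_i(A,k)\otimes_k\mathrm{H}_j(A,k)\xrightarrow{\pi_1^i\otimes\pi_2^j}\mathrm{H}_i(A_1,k)\otimes_k\mathrm{H}_j(A_2,k)\hookrightarrow\mathrm{H}_t(A,k), \]
where $\Delta_{i,j}$ is the $(i,j)$-graded component of the comultiplication and the last arrow is the K\"unneth inclusion. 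Inserting the identification $\mathrm{H}_\ast(A,k)\cong\mathrm{H}_\ast(A_1,k)\otimes_k\mathrm{H}_\ast(A_2,k)$ and using the counit axioms, a direct computation shows that this composite is the projection of $\mathrm{H}_t(A,k)$ onto its K\"unneth summand $\mathrm{H}_i(A_1,k)\otimes_k\mathrm{H}_j(A_2,k)$ — the Koszul signs occurring are trivial because every surviving term carries a tensor factor in homological degree $0$. Summing over $i+j=t$ and using that these summands exhaust $\mathrm{H}_t(A,k)$ yields $\mathrm{Id}_{\mathrm{H}_t(A,k)}$, which is $[id^t]$.

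With the identity at hand I would finish by induction on $p$, $2\le p\le r$, proving that $(\ref{eq: decomp})$ is $(t,p)$-trivial and that $v_p^t=\sum_{i+j=t}\big(P_{i,j}({}^{1}v_p^i\otimes[{}^{2}id^j])+(-1)^iP_{i,j}([{}^{1}id^i]\otimes{}^{2}v_p^j)\big)$. For $p=2$, triviality is vacuous and the formula follows by applying $d_2^{0,t}$ to the displayed identity and invoking Lemma~\ref{lemma: product formula} with $x=[{}^{1}id^i]\in{}^{1}E_2^{0,i}$ and $y=[{}^{2}id^j]\in{}^{2}E_2^{0,j}$ (whence the sign $(-1)^{0+i}$), recalling that ${}^{l}d_2^{0,s}([{}^{l}id^s])={}^{l}v_2^s$ by definition. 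For the step from $p$ to $p+1$ (with $p\le r-1$): by hypothesis ${}^{1}v_p^i=0$ and ${}^{2}v_p^j=0$ for all $i,j\le t$, so the formula for $v_p^t$ already proved gives $v_p^t=0$; by the corollary to Theorem~\ref{th: sah}, $(\ref{eq: decomp})$ is then $(t,p+1)$-trivial, so $[id^t]$ lives in $E_{p+1}^{0,t}(\mathrm{H}_t(A,k))=E_2^{0,t}(\mathrm{H}_t(A,k))$; since $P_{i,j}$ is a morphism of spectral sequences and $[{}^{1}id^i],[{}^{2}id^j]$ survive to page $p+1$ (because ${}^{l}v_q^s=0$ for $q\le p$), the displayed identity persists on page $p+1$, and applying $d_{p+1}^{0,t}$ and Lemma~\ref{lemma: product formula} gives the formula for $v_{p+1}^t$; here the classes ${}^{1}v_{p+1}^i,{}^{2}v_{p+1}^j$ are defined because the same corollary shows the sequences $(\ref{eq: decomp_i})$ are $(i,r)$- and $(j,r)$-trivial. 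Taking $p=r$ yields both conclusions of the theorem.

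The one genuinely delicate point is the computation in the third paragraph: one has to check that, under the universal coefficient identification, the spectral-sequence pairing really is the dual of the comultiplication of $\mathrm{H}_\ast(A,k)$ with the normalization in force, so that all sign conventions conspire to produce exactly $[id^t]$ rather than a signed combination of the $P_{i,j}([{}^{1}id^i]\otimes[{}^{2}id^j])$. Everything afterwards is formal manipulation with the product formula and the equivalence recorded in the corollary to Theorem~\ref{th: sah}.
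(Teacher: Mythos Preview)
Your proposal is correct and follows essentially the same approach as the paper: prove the identity $[id^t]=\sum_{i+j=t}P_{i,j}([{}^{1}id^i]\otimes[{}^{2}id^j])$ via the K\"unneth decomposition, then induct on the page number using Lemma~\ref{lemma: product formula} and Theorem~\ref{th: sah}. If anything, you supply more detail than the paper does on verifying that identity (the paper simply writes ``one can check that $P_{i,j}([{}^{1}id^i]\otimes[{}^{2}id^j])=[id_{ij}]$'') and on why the identity and the classes $[{}^{l}id^s]$ persist to later pages.
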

\begin{proof} Proceeding by induction, let $2 \leq p \leq r-1$ and suppose that
\begin{equation}\label{eq: decompext}  A \rightarrow A \rtimes C \rightarrow C  \end{equation}
is $(t,p)$-trivial. So, $v_p^t$ is defined and equal to $d_p^{0,t}([id^t])$, where $id^t$ is
the identity map in $\mathrm{Hom}_{C}(\mathrm{H}_t(A,k),\mathrm{H}_t(A,k))$. Similarly, we have ${}^{1}v_p^i={}^{1}d_p^{0,i}([{}^{1}id^i])$ and ${}^{2}v_p^j={}^{2}d_p^{0,j}([{}^{2}id^j])$.
Since
\[ \mathrm{H}_t(A,k)= \bigoplus_{i+j=t}\mathrm{H}_i(A_1,k)\otimes_k \mathrm{H}_j(A_2,k), \]
we have a decomposition $id^t=\sum_{i+j=t}id_{ij}$, with $id_{ij} \in \mathrm{Hom}_{C}(\mathrm{H}_t(A,k),\mathrm{H}_t(A,k))$ defined by
\[ id_{ij}(x\otimes y) = \big\{ \begin{array}{cc}
                                  x \otimes y & \mbox{if} \ x \in \mathrm{H}_i(A_1,k) \ \mbox{and} \ y \in \mathrm{H}_j(A_2,k)  \\
                                  0 & \mbox{otherwise}.
                                \end{array}\]
Under the appropriate identifications one can consider $id_{ij}$ as an element of $E^{0,t}_p(\mathrm{H}_t(A,k))$ and denote it by $[id_{ij}]$. We then have $[id^t]=\sum_{i+j=t}[id_{ij}]$. \\
Now, consider the map
\[ P_{i,j}: {}^{1}E^{0,i}_r(\mathrm{H}_i(A_1,k)) \otimes_k {}^{2}E^{0,j}_r(\mathrm{H}_j(A_2,k)) \rightarrow E^{0,t}_r(\mathrm{H}_{t}(A,k)). \]
One can check that $P_{i,j}([{}^{1}id^i]\otimes [{}^{2}id^j])=[id_{ij}]$ for all $i+j=t$, so the sum formula for the identity implies
\[ [id^t] = \sum_{i+j=t} P_{i,j}([{}^{1}id^i]\otimes [{}^{2}id^j]). \]
It then follows from Lemma \ref{lemma: product formula} and the definition of characteristic classes that,
\begin{eqnarray*}
v_p^t & = & d_p^{0,t}\Big(  \sum_{i+j=t} P_{i,j}([{}^{1}id^i]\otimes [{}^{2}id^j])\Big) \\
& = &  \sum_{i+j=t} d_p^{0,t}\Big(   P_{i,j}([{}^{1}id^i]\otimes [{}^{2}id^j])\Big) \\
& = & \sum_{i+j=t} \Big( P_{i,j}({}^{1}d_p^{0,i}([{}^{1}id^i])\otimes [{}^{2}id^j])+ (-1)^i P_{i,j}([{}^{1}id^i]\otimes {}^{2}d_p^{0,j}([{}^{2}id^j)])\Big) \\
& = & \sum_{i+j=t} \Big( P_{i,j}({}^{1}v_p^i\otimes [{}^{2}id^j])+ (-1)^i P_{i,j}([{}^{1}id^i]\otimes {}^{2}v_p^j)\Big).
\end{eqnarray*}
Since ${}^{1}v_p^i$ and ${}^{2}v_p^j$ are zero for all $i,j\leq t$, we have $v_p^t=0$.
It now follows from Theorem \ref{th: sah} that $d_p^{s,t}=0$ for all $s$ and all $B$-modules $M$ with $M^A=M$. We conclude that (\ref{eq: decompext}) is $(t,p+1)$-trivial and this finishes the induction. Thus, the extension
is $(t,r)$-trivial, and
\[ v_r^t=\sum_{i+j=m} \Big( P_{i,j}({}^{1}v_r^i\otimes [{}^{2}id^j])+ (-1)^i P_{i,j}([{}^{1}id^i]\otimes {}^{2}v_r^j)\Big).\]
\end{proof}
The following corollary is immediate.
\begin{corollary} Suppose the spectral sequences associated to $A_1 \rtimes  C$ and  $A_2 \rtimes  C$ collapse at the second page, in coefficients $\mathrm{H}_t(A_1,k)$ and $\mathrm{H}_t(A_2,k)$ respectively, for each $t\geq 0$. Then the spectral sequence associated to $A \rtimes C$ will collapse at the second page, for all coefficients $M$ for which $M^A=M$.
\end{corollary}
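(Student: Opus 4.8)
The plan is to deduce the corollary directly from the decomposition theorem (Theorem \ref{th: decomp}) together with the obstruction property of characteristic classes (Theorem \ref{th: sah}); essentially no new computation should be required.

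First I would translate the hypothesis into a statement about characteristic classes. To say that the spectral sequence $({}^{i}E_{\ast}(\mathrm{H}_t(A_i,k)),{}^{i}d_{\ast})$ collapses at the second page for every $t\geq 0$ is to say that ${}^{i}d_p^{s,q}=0$ for all $p\geq 2$ and all $s,q$. In particular, for each $t'\geq 0$ and each $r\geq 2$ the extension $A_i\rtimes C\rightarrow C$ is $(t',r)$-trivial, so the class ${}^{i}v_r^{t'}$ is defined, and since it is the value of the (vanishing) differential ${}^{i}d_r^{0,t'}$ at $[{}^{i}id^{t'}]$ it equals $0$. Hence ${}^{1}v_p^{i}=0$ and ${}^{2}v_p^{j}=0$ for all $i,j\geq 0$ and all $p\geq 2$.

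Next, fixing $t\geq 0$ and $r\geq 2$, the hypotheses of Theorem \ref{th: decomp} are met, so $A\rightarrow A\rtimes C\rightarrow C$ is $(t,r)$-trivial and the decomposition formula expresses $v_r^t$ as a sum of terms each of which carries a factor ${}^{1}v_r^{i}$ or ${}^{2}v_r^{j}$; by the previous step all of these vanish, so $v_r^t=0$. Since $t$ and $r$ were arbitrary, $A\rtimes C\rightarrow C$ is $(t,r)$-trivial for all $t\geq 0$ and all $r\geq 2$, so — applying Theorem \ref{th: sah} inductively, as noted after its proof — $\mathscr{M}_r^t(A,A\rtimes C)=C\mbox{-mod}$ for all $t,r$. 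Then for any module $M$ with $M^A=M$ we have $M\in\mathscr{M}_r^t(A,A\rtimes C)$ for every $t,r$, and Theorem \ref{th: sah}(b) gives $d_r^{s,t}(x)=(-1)^{s}\,y\cdot v_r^t=0$ for all $s$ and all $x\in E_r^{s,t}(M)$, so the spectral sequence associated to $A\rtimes C$ with coefficients in $M$ collapses at the second page.

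The only real point of care is bookkeeping the quantifiers: Theorem \ref{th: decomp} is stated for a single pair $(t,r)$, so I would apply it once for each such pair (observing that its hypotheses hold uniformly, by the first step) before drawing the conclusion for arbitrary $M$. I do not expect any substantive obstacle beyond this.
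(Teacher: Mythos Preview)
Your proposal is correct and follows exactly the approach the paper has in mind: the paper simply states that the corollary is immediate from Theorem~\ref{th: decomp}, and your write-up just spells out the straightforward reduction (vanishing of ${}^{1}v_p^i$ and ${}^{2}v_p^j$ from collapse, then the decomposition formula forces $v_r^t=0$, then Theorem~\ref{th: sah}(b) kills all differentials). There is nothing to add or correct.
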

\section{The Lie algebra case} \label{sec: lie algebra}
It is a well-known fact that the universal enveloping algebra $U(\mathfrak{g})$ of a Lie algebra $\mathfrak{g}$ over a field $k$ is a Hopf algebra and that its (co)homology coincides with the ordinary (Chevalley) (co)homology of $\mathfrak{g}$. One can also verify that the universal enveloping algebra functor $U$ maps a (split) extension of Lie algebras to a (split) short exact sequence of Hopf algebras.

Now, let us consider a split Lie algebra extension

\begin{equation}\label{eq: splitextension}\xymatrix {&0 \ar[r] &\mathfrak{n}\ar[r] &\mathfrak{g} \ar[r]^{\pi} &\mathfrak{h}
\ar@/_1.5pc/[l]\ar[r] &0}.
\end{equation}
There is a Lie algebra homomorphism
$\varphi: \mathfrak{h} \rightarrow \mathrm{Der}(\mathfrak{n})$,
such that $\mathfrak{g}$ is isomorphic to the semi-direct product
$\mathfrak{g}\cong \mathfrak{n} \rtimes_{\varphi} \mathfrak{h}$.
Also, this extension has the associated Hochschild-Serre spectral sequence
\[ E^{p,q}_2(M)= \mathrm{H}^p(\mathfrak{h},\mathrm{H}^q(\mathfrak{n},M)) \Rightarrow \mathrm{H}^{p+q}(\mathfrak{g},M) \]
for all $M \in \mathfrak{g}$-mod.

Throughout this section, we will identify  $\mathfrak{h}\mbox{-mod}$ with $\{ M \in \mathfrak{g}\mbox{-mod} \ | \ M^{\mathfrak{n}}=M \}.$
\begin{definition}\rm Let $t\geq0$ and $r\geq2$. A split Lie algebra extensions is called \emph{$(t,r)$-trivial} if the associated split short exact sequence of universal enveloping algebras is $(t,r)$-trivial.
\end{definition}
We can now reformulate the results of the two previous sections in terms of split Lie algebra extensions.
\begin{theorem}\label{th: sah lie} Let $t\geq0$ and $r\geq2$. Suppose  (\ref{eq: splitextension}) is $(t,r)$-trivial, then the following holds.
\begin{itemize}
\item[(a)] For all $s\geq0$ and for all $M \in \mathfrak{h}\mbox{-mod}$, there is a canonical surjective homomorphism
\[ \theta: E_r^{s,0}(\mathrm{H}^t(\mathfrak{n},M)) \rightarrow E_r^{s,t}(M).\]
\item[(b)] The characteristic class $v_r^t \in E_r^{r,t-r+1}(\mathrm{H}_t(\mathfrak{n},k))$  has the property
\[ d_r^{s,t}(x)= (-1)^s y \cdot v_r^t    \]
$\forall s\geq0, \forall M \in \mathfrak{h}\mbox{-mod} , \forall x \in E_r^{s,t}(M)   \mbox{and} \ \forall y \in  E_r^{s,0}(\mathrm{H}^t(\mathfrak{n},M)) \ \mbox{with} \ \theta(y)=x $.
\item[(c)] $v_r^t$ is completely determined by the previous property.
\item[]
\item[(d)] Let $\sigma: \mathfrak{d} \rightarrow \mathfrak{h}$ be a homomorphism of Lie algebras and set $\varphi'= \varphi\circ \sigma$. Suppose the split extension associated to $\mathfrak{n} \rtimes_{\varphi'} \mathfrak{d} $ is $(t,r)$-trivial and denote its characteristic classes by $w_r^t$. Then $v_r^t$ maps to $w_r^t$ under the map induced by $\sigma$ on the spectral sequences. In particular,  $v_2^t$ maps to $w_2^t$.
\end{itemize}
\end{theorem}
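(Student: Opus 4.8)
\emph{Proof plan.} The plan is to deduce Theorem~\ref{th: sah lie} from Theorem~\ref{th: sah} by transporting the entire setup across the universal enveloping algebra functor $U$. First I would invoke the facts recalled at the start of this section: $U$ is a functor from Lie algebras to Hopf algebras that carries a (split) Lie algebra extension to a (split) short exact sequence of Hopf algebras, so applying $U$ to (\ref{eq: splitextension}) produces a split short exact sequence of Hopf algebras
\[ U(\mathfrak{n}) \xrightarrow{\ } U(\mathfrak{g}) \xrightarrow{\ } U(\mathfrak{h}), \]
whose section is $U$ applied to the section of (\ref{eq: splitextension}). Moreover, for any $\mathfrak{g}$-module $M$ (equivalently, any $U(\mathfrak{g})$-module) one has natural isomorphisms $\mathrm{H}^{*}(\mathfrak{g},M)\cong \mathrm{H}^{*}(U(\mathfrak{g}),M)$, and likewise for $\mathfrak{n}$ and $\mathfrak{h}$, and these isomorphisms are compatible with the $\mathfrak{h}$-module (= $U(\mathfrak{h})$-module) structures on $\mathrm{H}^{*}(\mathfrak{n},M)$. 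Since $M^{\mathfrak{n}}=M^{U(\mathfrak{n})}$, the composite functor $(-)^{\mathfrak{g}}\cong(-)^{\mathfrak{h}}\circ(-)^{\mathfrak{n}}$ coincides with $(-)^{U(\mathfrak{g})}\cong(-)^{U(\mathfrak{h})}\circ(-)^{U(\mathfrak{n})}$; by uniqueness of the Grothendieck spectral sequence of a composite functor, the Hochschild--Serre spectral sequence of (\ref{eq: splitextension}) is identified with the spectral sequence (\ref{eq: spec seq}) of $U(\mathfrak{n})\to U(\mathfrak{g})\to U(\mathfrak{h})$. Under the identification $\mathfrak{h}\mbox{-mod}=\{M\in\mathfrak{g}\mbox{-mod}\mid M^{\mathfrak{n}}=M\}$, these also match $\mathrm{H}_t(\mathfrak{n},k)$ with $\mathrm{H}_t(U(\mathfrak{n}),k)$, so the notions of $(t,r)$-triviality agree and the characteristic classes $v_r^t$ defined here are exactly those of Theorem~\ref{th: sah}.

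With these identifications in place, parts (a), (b) and (c) are immediate translations of Theorem~\ref{th: sah}(a), (b) and (c), applied with $A=U(\mathfrak{n})$, $B=U(\mathfrak{g})$, $C=U(\mathfrak{h})$. There is nothing further to prove beyond restating the conclusions in Lie-algebra notation.

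For part (d), I would apply $U$ to $\sigma:\mathfrak{d}\to\mathfrak{h}$ to obtain a Hopf algebra map $\rho=U(\sigma):U(\mathfrak{d})\to U(\mathfrak{h})$. Since $U(\mathfrak{d})$ is cocommutative, the result of Molnar recalled in Section~\ref{sec: pre} (see \cite{Molnar}) ensures that $U(\mathfrak{n})\rtimes U(\mathfrak{d})$, with the standard coalgebra structure, is a Hopf algebra; the compatibility of $U$ with semi-direct products gives a Hopf algebra isomorphism $U(\mathfrak{n})\rtimes U(\mathfrak{d})\cong U(\mathfrak{n}\rtimes_{\varphi'}\mathfrak{d})$, where the $U(\mathfrak{d})$-module bialgebra structure on $U(\mathfrak{n})$ is the one induced by $\varphi'=\varphi\circ\sigma$. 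The hypothesis that $\mathfrak{n}\rtimes_{\varphi'}\mathfrak{d}$ is $(t,r)$-trivial thus becomes precisely the hypothesis needed in Theorem~\ref{th: sah}(d) for the split short exact sequence attached to $U(\mathfrak{n})\rtimes U(\mathfrak{d})$. Invoking Theorem~\ref{th: sah}(d) with $D=U(\mathfrak{d})$ and $\rho=U(\sigma)$, we conclude that $v_r^t$ maps to $w_r^t$ under the map of spectral sequences induced by $\rho$; and since the identifications above send the map induced by $\rho$ to the map induced by $\sigma$ on the Hochschild--Serre spectral sequences, part (d) follows, including the special case $r=2$.

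The only genuine bookkeeping — and the point I expect to be the main obstacle — is checking that the $U(\mathfrak{d})$-module bialgebra structure on $U(\mathfrak{n})$ produced by Molnar's theorem (via the adjoint action of $U(\mathfrak{n}\rtimes_{\varphi'}\mathfrak{d})$ restricted along the section) coincides with the one obtained by exponentiating the derivation action $\varphi'$, and, more basically, that $U$ of a split Lie algebra extension really is a split short exact sequence of Hopf algebras in the strict sense of Section~\ref{sec: pre}: normality of $U(\mathfrak{n})\hookrightarrow U(\mathfrak{g})$ (which holds because $U(\mathfrak{n})$ is stable under the adjoint action of $U(\mathfrak{g})$, $\mathfrak{n}$ being an ideal of $\mathfrak{g}$), conormality of the projection (automatic since $U(\mathfrak{g})$ is cocommutative), and the kernel/cokernel identifications $\mathrm{KER}$ and $\mathrm{COK}$. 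These verifications are routine for enveloping algebras and can either be carried out directly or taken from \cite{Molnar}; I would state them explicitly and then proceed as above.
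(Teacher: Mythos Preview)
Your proposal is correct and matches the paper's approach exactly: the paper does not give a separate proof of Theorem~\ref{th: sah lie} at all, but simply presents it as the reformulation of Theorem~\ref{th: sah} obtained by applying the universal enveloping algebra functor, relying on the identifications recalled at the start of Section~\ref{sec: lie algebra}. Your write-up in fact supplies more of the bookkeeping (the identification of spectral sequences, the cocommutativity of $U(\mathfrak{d})$ to invoke Molnar, the compatibility $U(\mathfrak{n}\rtimes_{\varphi'}\mathfrak{d})\cong U(\mathfrak{n})\rtimes U(\mathfrak{d})$) than the paper makes explicit.
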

\begin{corollary}
The Hochschild-Serre spectral sequence of (\ref{eq: splitextension}) collapses at the second page for all coefficients $M \in \mathfrak{h}\mbox{-mod}$ if and only if the edge differentials
\[ d_m^{0,t}: E_m^{0,t}(\mathrm{H}_t(\mathfrak{n},k)) \rightarrow E_m^{m,t-m+1}(\mathrm{H}_t(\mathfrak{n},k)) \]
are zero for all $t\geq 0$  and $m \geq 2$.
In particular,  the spectral sequence will collapse at the second page for all coefficients $M \in \mathfrak{h}\mbox{-mod}$ if and only if it collapses with coefficients $\mathrm{H}_t(\mathfrak{n},k)$, for all $t\geq 0$.
\end{corollary}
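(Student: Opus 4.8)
The plan is to deduce this corollary directly from its Hopf-algebra counterpart, namely the second corollary of Section~\ref{sec: char classes}, by transporting everything through the universal enveloping algebra functor $U$. Recall that $U$ sends the split extension~(\ref{eq: splitextension}) to a split short exact sequence of Hopf algebras $U(\mathfrak{n}) \rightarrow U(\mathfrak{g}) \rightarrow U(\mathfrak{h})$, that $\mathrm{H}_{\ast}(U(\mathfrak{n}),k) \cong \mathrm{H}_{\ast}(\mathfrak{n},k)$ as $U(\mathfrak{h})$-modules, and that under the identification of $\mathfrak{h}\mbox{-mod}$ with $\{ M \in \mathfrak{g}\mbox{-mod} \mid M^{\mathfrak{n}}=M \}$ the Grothendieck spectral sequence~(\ref{eq: spec seq}) attached to this extension is precisely the Hochschild--Serre spectral sequence. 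With these identifications, the statement to be proved is exactly the Lie-algebra reformulation of that Hopf-algebra corollary, so it suffices to re-run the same argument here, using Theorem~\ref{th: sah lie} in place of Theorem~\ref{th: sah}. Note also that $\mathrm{H}_t(\mathfrak{n},k)$ indeed lies in $\mathfrak{h}\mbox{-mod}$, since $\mathfrak{n}$ acts trivially on it, so plugging it in as a coefficient module is legitimate.

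For the substantive implication I would argue by induction on $r$ that~(\ref{eq: splitextension}) is $(t,r)$-trivial for every $t\geq 0$, equivalently that $d_p^{s,t}=0$ for all $s$, all $M \in \mathfrak{h}\mbox{-mod}$ and all $2\leq p<r$. The case $r=2$ is vacuous, since the defining condition on $d_p^{s,t}$ for $2\leq p\leq 1$ is empty and $\mathrm{H}_t(\mathfrak{n},k)$ satisfies $M^{\mathfrak{n}}=M$. For the inductive step, assuming $(t,r)$-triviality the characteristic class $v_r^t$ is defined and equals $d_r^{0,t}([id^t])$, which is the image of $[id^t]$ under the edge differential $d_r^{0,t}$ on $E_r^{0,t}(\mathrm{H}_t(\mathfrak{n},k))$; by hypothesis this edge differential vanishes, hence $v_r^t=0$. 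Theorem~\ref{th: sah lie}(b) then yields $d_r^{s,t}(x)=(-1)^s y\cdot v_r^t=0$ for every $s$, every $M \in \mathfrak{h}\mbox{-mod}$ and every $x \in E_r^{s,t}(M)$, so~(\ref{eq: splitextension}) is $(t,r+1)$-trivial, completing the induction. Letting $r$ and $t$ range over all their values shows that every differential of the Hochschild--Serre spectral sequence vanishes for every $M \in \mathfrak{h}\mbox{-mod}$, i.e. the spectral sequence collapses at the second page.

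The converse implication is immediate: if the spectral sequence collapses at $E_2$ for all coefficients, then in particular every differential with coefficients in $\mathrm{H}_t(\mathfrak{n},k)$ is zero, so a fortiori the edge differentials $d_m^{0,t}$ vanish. The final ``in particular'' clause follows the same way: collapse with the specific coefficients $\mathrm{H}_t(\mathfrak{n},k)$ for every $t$ already forces each edge differential $d_m^{0,t}$ to be zero, and then the first part upgrades this to collapse for all $M \in \mathfrak{h}\mbox{-mod}$. I do not anticipate any real obstacle; the only point requiring care is the bookkeeping attached to the functor $U$ --- in particular verifying that the edge differentials and the classes $v_r^t$ computed Hopf-algebraically for $U(\mathfrak{n})$ coincide with those of the Chevalley--Eilenberg theory --- but this is routine given Sections~\ref{sec: cohom}--\ref{sec: char classes} and the standard compatibility of $U$ with (co)homology.
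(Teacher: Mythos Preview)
Your proposal is correct and follows the same approach as the paper: the paper states this corollary without proof, treating it as the direct Lie-algebra translation of the corresponding Hopf-algebra corollary in Section~\ref{sec: char classes}, which in turn is deduced (as the paper says) ``using this theorem inductively'' from Theorem~\ref{th: sah}. Your induction on $r$ via Theorem~\ref{th: sah lie}(b) is exactly that argument spelled out in detail.
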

Next, let us suppose that $\mathfrak{n}$ is a direct sum of two Lie algebras
$\mathfrak{n} = \mathfrak{n}_1 \oplus \mathfrak{n}_2$,
and that we have two Lie algebra homomorphisms
$\varphi_1 : \mathfrak{h} \rightarrow \mathrm{Der}(\mathfrak{n}_1)$ and
$\varphi_2 : \mathfrak{h} \rightarrow \mathrm{Der}(\mathfrak{n}_2)$.
Because $\mathrm{Der}(\mathfrak{n}_1) \oplus \mathrm{Der}(\mathfrak{n}_2)$ embeds in $\mathrm{Der}(\mathfrak{n})$, we can use $\varphi_1$ and $\varphi_2$ to obtain a Lie algebra homomorphism
$\varphi: \mathfrak{h} \rightarrow \mathrm{Der}(\mathfrak{n})$.
We then have three split Lie algebra extensions; namely, $0 \rightarrow \mathfrak{n}_i \rightarrow \mathfrak{n}_i\rtimes_{\varphi_i} \mathfrak{h} \rightarrow \mathfrak{h} \rightarrow 0$
for $i=1,$ $2$, and
$0 \rightarrow \mathfrak{n} \rightarrow \mathfrak{n}\rtimes_{\varphi} \mathfrak{h} \rightarrow \mathfrak{h} \rightarrow 0.$
By translating this situation to the universal enveloping algebras, we find ourselves in the set-up of section \ref{sec: decomp}. We are ready to reformulate Theorem \ref{th: decomp} for Lie algebra extensions.
\begin{theorem}\label{th: decomplie} Let $t\geq 0$ and $r\geq 2$.
Suppose the characteristic classes ${}^{1}v_p^i$ and ${}^{2}v_p^j$, of ${}^{1}E_{\ast}(\mathrm{H}_i(\mathfrak{n}_1,k))$ and  ${}^{2}E_{\ast}(\mathrm{H}_j(\mathfrak{n}_2,k))$ respectively, are zero for all $i,j\leq t$ and $2\leq p\leq r-1$. Then the split extension
$0 \rightarrow \mathfrak{n} \rightarrow \mathfrak{g} \rightarrow \mathfrak{h} \rightarrow 0$
is $(t,r)$-trivial. Furthermore, we have a decomposition formula
\[ v_r^t=\sum_{i+j=m} \Big( P_{i,j}({}^{1}v_r^i\otimes [{}^{2}id^j])+ (-1)^i P_{i,j}([{}^{1}id^i]\otimes {}^{2}v_r^j)\Big). \]
\end{theorem}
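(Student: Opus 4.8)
The plan is to reduce the statement to Theorem \ref{th: decomp} by applying the universal enveloping algebra functor $U$. First I would put $A_1 = U(\mathfrak{n}_1)$, $A_2 = U(\mathfrak{n}_2)$, $C = U(\mathfrak{h})$ and $A = U(\mathfrak{n})$. Since $\mathfrak{n} = \mathfrak{n}_1 \oplus \mathfrak{n}_2$, the Poincar\'e--Birkhoff--Witt theorem yields a Hopf algebra isomorphism $A \cong A_1 \otimes A_2$. Because every universal enveloping algebra is cocommutative, Molnar's theorem (recalled in Section \ref{sec: pre}) guarantees that $A_1 \rtimes C$, $A_2 \rtimes C$ and $A \rtimes C$, equipped with the standard coalgebra structure, are Hopf algebras; and since $U$ carries a split extension of Lie algebras to a split short exact sequence of Hopf algebras, the three split Lie algebra extensions are sent to the three split short exact sequences (\ref{eq: decomp_i}) and (\ref{eq: decomp}). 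Applying $U$ to $\varphi_1$ and $\varphi_2$ produces the $C$-module bialgebra structures on $A_1$ and $A_2$, and I would verify that, under the PBW identification $A = A_1 \otimes A_2$, the $C$-module bialgebra structure induced by $\varphi = \varphi_1 \oplus \varphi_2$ is exactly the one considered in Section \ref{sec: decomp}, i.e. that it factors through the tensor product. This is the only genuinely hands-on point in the argument, and the step I expect to be the main (though minor) obstacle.

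Next I would match up the invariants of the two formalisms. The (co)homology of $U(\mathfrak{g})$ agrees with the Chevalley (co)homology of $\mathfrak{g}$, so $\mathrm{H}_i(A_l,k) = \mathrm{H}_i(\mathfrak{n}_l,k)$, $\mathrm{H}_t(A,k) = \mathrm{H}_t(\mathfrak{n},k)$, and the Grothendieck spectral sequence (\ref{eq: spec seq}) attached to each of the $U$-images is the corresponding Hochschild--Serre spectral sequence. Under these identifications the characteristic classes ${}^{1}v_p^i$, ${}^{2}v_p^j$, the pairings $P_{i,j}$ and the elements $[{}^{l}id^s]$ appearing in the present statement coincide with those of Theorem \ref{th: decomp}, and the hypothesis that ${}^{1}v_p^i = {}^{2}v_p^j = 0$ for all $i,j \leq t$ and $2 \leq p \leq r-1$ is precisely the hypothesis of that theorem.

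Finally, Theorem \ref{th: decomp} applied to $A \to A \rtimes C \to C$ shows this short exact sequence of Hopf algebras is $(t,r)$-trivial, which by the definition of $(t,r)$-triviality for Lie algebra extensions means that $0 \to \mathfrak{n} \to \mathfrak{g} \to \mathfrak{h} \to 0$ is $(t,r)$-trivial; and the same theorem gives
\[ v_r^t = \sum_{i+j=m} \Big( P_{i,j}({}^{1}v_r^i \otimes [{}^{2}id^j]) + (-1)^i P_{i,j}([{}^{1}id^i] \otimes {}^{2}v_r^j)\Big), \]
which is the claimed decomposition formula. Thus, apart from the bookkeeping carried out in the first paragraph, the result is a direct transcription of Theorem \ref{th: decomp}.
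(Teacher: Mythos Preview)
Your proposal is correct and follows exactly the paper's approach: the paper presents Theorem \ref{th: decomplie} as a direct reformulation of Theorem \ref{th: decomp} via the universal enveloping algebra functor and gives no separate proof. In fact, you supply more detail (the PBW identification $U(\mathfrak{n}_1\oplus\mathfrak{n}_2)\cong U(\mathfrak{n}_1)\otimes U(\mathfrak{n}_2)$, cocommutativity of $U(\mathfrak{h})$, and the matching of spectral sequences and characteristic classes) than the paper itself records.
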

We will now discuss some corollaries of theorems \ref{th: sah lie} and \ref{th: decomplie}.
\begin{corollary} \label{cor: factor}Suppose the Hochschild-Serre spectral sequences of $\mathfrak{n}_1 \rtimes_{\varphi_1} \mathfrak{h}$ and  $\mathfrak{n}_2 \rtimes_{\varphi_2} \mathfrak{h}$ collapse at the second page, in coefficients $\mathrm{H}_t(\mathfrak{n}_1,k)$ and $\mathrm{H}_t(\mathfrak{n}_2,k)$ respectively, for each $t\geq 0$. Then the Hochschild-Serre spectral sequence of $\mathfrak{n} \rtimes_{\varphi} \mathfrak{h}$ will collapse at the second page, for all coefficients $M$ in $\mathfrak{h}\mbox{-mod}$.
\end{corollary}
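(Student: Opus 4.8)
The plan is to obtain this as a direct consequence of the decomposition theorem (Theorem \ref{th: decomplie}) together with the collapse criterion stated in the corollary following Theorem \ref{th: sah lie}. First I would restate the hypothesis in the language of characteristic classes. If the Hochschild-Serre spectral sequence of $\mathfrak{n}_i \rtimes_{\varphi_i} \mathfrak{h}$ collapses at the second page in coefficients $\mathrm{H}_t(\mathfrak{n}_i,k)$ for every $t \geq 0$, then every differential $d_p^{s,t}$ with source $E_p^{s,t}(\mathrm{H}_t(\mathfrak{n}_i,k))$ vanishes; in particular the split extension $\mathfrak{n}_i \rtimes_{\varphi_i} \mathfrak{h}$ is $(t,r)$-trivial for all $t \geq 0$ and all $r \geq 2$. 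Hence each characteristic class ${}^{i}v_r^t = {}^{i}d_r^{0,t}([{}^{i}id^t])$ is defined, and it is zero because the differential is zero.

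Next I would apply Theorem \ref{th: decomplie}. Fixing $t \geq 0$ and $r \geq 2$, the vanishing of ${}^{1}v_p^i$ and ${}^{2}v_p^j$ for all $i,j \leq t$ and $2 \leq p \leq r-1$ (in fact for all such indices) yields, by the theorem, that $0 \to \mathfrak{n} \to \mathfrak{n} \rtimes_{\varphi} \mathfrak{h} \to \mathfrak{h} \to 0$ is $(t,r)$-trivial and that $v_r^t$ equals the displayed sum of terms $P_{i,j}({}^{1}v_r^i \otimes [{}^{2}id^j])$ and $P_{i,j}([{}^{1}id^i] \otimes {}^{2}v_r^j)$, each of which vanishes; thus $v_r^t = 0$. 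Since $t$ and $r$ were arbitrary, all characteristic classes of the spectral sequence of $\mathfrak{n} \rtimes_{\varphi} \mathfrak{h}$ vanish, equivalently all edge differentials $d_m^{0,t} : E_m^{0,t}(\mathrm{H}_t(\mathfrak{n},k)) \to E_m^{m,t-m+1}(\mathrm{H}_t(\mathfrak{n},k))$ are zero. By the collapse criterion (the corollary after Theorem \ref{th: sah lie}), the Hochschild-Serre spectral sequence of $\mathfrak{n} \rtimes_{\varphi} \mathfrak{h}$ then collapses at the second page for all $M \in \mathfrak{h}\mbox{-mod}$.

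There is no genuine obstacle: the argument is formal once Theorem \ref{th: decomplie} is available, the real work having been done there and in Theorem \ref{th: sah lie}. The only place calling for a little care is the first step, where one must check that ``collapse at the second page in coefficients $\mathrm{H}_t(\mathfrak{n}_i,k)$ for every $t$'' really does force each ${}^{i}v_r^s$ to be defined before concluding it is zero, and that the vanishing of the edge differential on $[id^t]$ matches exactly the hypothesis of the collapse corollary; both follow immediately from the definitions of $(t,r)$-triviality and of characteristic classes given in Sections \ref{sec: char classes} and \ref{sec: lie algebra}.
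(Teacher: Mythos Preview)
Your proposal is correct and follows essentially the same approach as the paper, which simply states that the corollary is immediate from Theorem~\ref{th: decomplie}. You have merely spelled out the details of why it is immediate: the hypothesis forces all ${}^{i}v_p^t$ to vanish, so the decomposition formula gives $v_r^t=0$ for all $t,r$, and the collapse criterion then finishes the argument.
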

\begin{proof} This is immediate from Theorem  \ref{th: decomplie}.
\end{proof}
\begin{corollary}\label{cor: der} Let $k$ be a field of characteristic zero and suppose $ 0 \rightarrow \mathfrak{n} \rightarrow \mathfrak{n} \rtimes_{\varphi} \mathfrak{h} \rightarrow \mathfrak{h}$ is a split extension of finite dimensional Lie algebras. Assume that $\mathrm{Der}(\mathfrak{n})$ has a semi-simple Lie subalgebra $\mathfrak{s}$ such that $\varphi$ factors through $\mathfrak{s}$, i.e.
$\varphi: \mathfrak{h} \rightarrow \mathfrak{s} \subseteq \mathrm{Der}(\mathfrak{n})$.
Then, the Hochschild-Serre spectral sequence $(E_{\ast}(M),d_{\ast})$ associated to the split extension will collapse at the second page for any $\mathfrak{h}$-module $M$.
\end{corollary}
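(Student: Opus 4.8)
The plan is to reduce the assertion to the special case $\mathfrak{h}=\mathfrak{s}$ by means of the naturality statement Theorem~\ref{th: sah lie}(d), and then to settle that case directly using standard facts about the cohomology of semisimple Lie algebras in characteristic zero. For the reduction, factor the action as $\varphi\colon\mathfrak{h}\xrightarrow{\,\bar\varphi\,}\mathfrak{s}\hookrightarrow\mathrm{Der}(\mathfrak{n})$ and consider, besides $\mathfrak{n}\rtimes_{\varphi}\mathfrak{h}$, the split extension $0\to\mathfrak{n}\to\mathfrak{n}\rtimes\mathfrak{s}\to\mathfrak{s}\to 0$ in which $\mathfrak{s}$ acts tautologically. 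Applying Theorem~\ref{th: sah lie}(d) with $\mathfrak{d}=\mathfrak{h}$ and $\sigma=\bar\varphi$, so that $\varphi'$ is precisely the given action and $\mathfrak{n}\rtimes_{\varphi'}\mathfrak{d}=\mathfrak{n}\rtimes_{\varphi}\mathfrak{h}$, one learns that whenever both extensions are $(t,r)$-trivial, the characteristic classes ${}^{\mathfrak{s}}v_r^t$ of $\mathfrak{n}\rtimes\mathfrak{s}$ are carried by $\bar\varphi_{\ast}$ onto the characteristic classes $v_r^t$ of $\mathfrak{n}\rtimes_{\varphi}\mathfrak{h}$. Granting the key claim below — that \emph{all} classes ${}^{\mathfrak{s}}v_r^t$ vanish — a short induction on $r$ finishes the argument: $\mathfrak{n}\rtimes_{\varphi}\mathfrak{h}$ is $(t,2)$-trivial for trivial reasons; if it is $(t,r)$-trivial for all $t$, then (the $\mathfrak{s}$-extension being $(t,r)$-trivial by the key claim) Theorem~\ref{th: sah lie}(d) gives $v_r^t=\bar\varphi_{\ast}({}^{\mathfrak{s}}v_r^t)=0$ for all $t$, and Theorem~\ref{th: sah lie}(b) then forces $d_r^{s,t}=0$ for all $s$, all $t$ and all $M$, i.e.\ $(t,r+1)$-triviality for all $t$. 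Hence every $d_r$ vanishes and the spectral sequence of $\mathfrak{n}\rtimes_{\varphi}\mathfrak{h}$ collapses at $E_2$ for every $M$. (Only the finite dimensional modules $\mathrm{H}_t(\mathfrak{n},k)$ are ever needed as coefficients.)

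It remains to prove the key claim: the Hochschild--Serre spectral sequence of $\mathfrak{g}=\mathfrak{n}\rtimes\mathfrak{s}$ collapses at $E_2$ for every finite dimensional module $M$ (equivalently, by the corollary following Theorem~\ref{th: sah lie}, that all its characteristic classes vanish). I would work with the Chevalley--Eilenberg complex $C^{\bullet}(\mathfrak{g},M)=\Lambda^{\bullet}\mathfrak{g}^{\ast}\otimes M$. The subalgebra $\mathfrak{s}\subseteq\mathfrak{g}$ acts on it through operators $\mathcal L_X=d\iota_X+\iota_X d$, hence trivially on $\mathrm H^{\ast}(\mathfrak{g},M)$; as each $C^n$ is a finite dimensional $\mathfrak{s}$-module, complete reducibility splits $C^{\bullet}(\mathfrak{g},M)$ $\mathfrak{s}$-equivariantly into its invariant part $C^{\bullet}(\mathfrak{g},M)^{\mathfrak{s}}$ and a complementary subcomplex with no trivial constituents, the latter necessarily acyclic. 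So $\mathrm H^{\ast}(\mathfrak{g},M)=\mathrm H^{\ast}(C^{\bullet}(\mathfrak{g},M)^{\mathfrak{s}})$. Since $\mathfrak{n}$ is an ideal with complement $\mathfrak{s}$, the invariant complex is the total complex of the double complex $D^{p,q}=(\Lambda^p\mathfrak{s}^{\ast}\otimes\Lambda^q\mathfrak{n}^{\ast}\otimes M)^{\mathfrak{s}}$ whose horizontal differential is the $\mathfrak{s}$-cohomology differential. Decomposing each $\Lambda^q\mathfrak{n}^{\ast}\otimes M$ $\mathfrak{s}$-equivariantly into invariants plus the rest, and using that (i) the Chevalley--Eilenberg differential of a semisimple Lie algebra vanishes on invariant forms and (ii) $\mathrm H^{\ast}(\mathfrak{s},W)=0$ for finite dimensional $W$ with $W^{\mathfrak{s}}=0$ (the Casimir argument), one finds that $D$ is the direct sum of an acyclic double complex and the tensor product $((\Lambda^{\bullet}\mathfrak{s}^{\ast})^{\mathfrak{s}},0)\otimes_k((\Lambda^{\bullet}\mathfrak{n}^{\ast}\otimes M)^{\mathfrak{s}},d)$. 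Exactness of $(-)^{\mathfrak{s}}$ identifies the cohomology of the second factor with $\mathrm H^{\ast}(\mathfrak{n},M)^{\mathfrak{s}}$, so by the K\"unneth formula
\[ \dim\mathrm H^{n}(\mathfrak{g},M)=\sum_{p+q=n}\dim\bigl(\mathrm H^{p}(\mathfrak{s},k)\otimes\mathrm H^{q}(\mathfrak{n},M)^{\mathfrak{s}}\bigr). \]
The same two facts identify $E_2^{p,q}(M)=\mathrm H^{p}(\mathfrak{s},\mathrm H^{q}(\mathfrak{n},M))$ with $\mathrm H^{p}(\mathfrak{s},k)\otimes\mathrm H^{q}(\mathfrak{n},M)^{\mathfrak{s}}$, so $\sum_{p+q=n}\dim E_2^{p,q}=\dim\mathrm H^{n}(\mathfrak{g},M)=\sum_{p+q=n}\dim E_{\infty}^{p,q}$. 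Since $\dim E_{\infty}^{p,q}\le\dim E_2^{p,q}$ always, the spectral sequence collapses at $E_2$.

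The formal part (the reduction) is essentially automatic once Theorem~\ref{th: sah lie} is in hand; the real work is the key claim. I expect the main obstacle to be that the characteristic-class formalism by itself does \emph{not} settle it: for $r=2$ one gets ${}^{\mathfrak{s}}v_2^t=0$ immediately from Whitehead's second lemma, but for $r\ge 3$ the group in which ${}^{\mathfrak{s}}v_r^t$ lives need not vanish (already $\mathrm H^3(\mathfrak{sl}_2,k)\ne 0$), so one genuinely needs a global input — here the explicit computation of $\mathrm H^{\ast}(\mathfrak{g},M)$ and the ensuing dimension count — to kill the higher differentials. The delicate points to carry out carefully are the $\mathfrak{s}$-equivariant decomposition of the Chevalley--Eilenberg complex and the identification of its invariant subcomplex with the displayed tensor product of complexes, together with keeping all the vector spaces involved finite dimensional so that the dimension count is legitimate.
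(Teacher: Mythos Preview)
Your reduction via Theorem~\ref{th: sah lie}(d) and the induction on $r$ is exactly the paper's argument. The only difference is that the paper simply invokes as a ``well-known fact'' that the Hochschild--Serre spectral sequence of an extension with semi-simple quotient collapses at $E_2$ in finite dimensional coefficients (this is the Hochschild--Serre theorem, \cite{HochSerre}), whereas you supply a self-contained proof of that input via the Chevalley--Eilenberg complex and a dimension count; your argument for the key claim is sound and is essentially the classical one.
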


\begin{proof}
The Lie algebra homomorphism $\varphi: \mathfrak{h} \rightarrow \mathfrak{s}$ induces a commutative diagram
\[ \xymatrix{ 0 \ar[r] & \mathfrak{n} \ar[r]  & \mathfrak{n} \rtimes_{i} \mathfrak{s} \ar[r]  & \mathfrak{s} \ar[r] & 0 \\
0 \ar[r] & \mathfrak{n} \ar[r] \ar[u]^{\mathrm{Id}} & \mathfrak{n} \rtimes_{\varphi} \mathfrak{h} \ar[u] \ar[r]  & \mathfrak{h} \ar[r] \ar[u]^{\varphi}& 0, } \]
where $i: \mathfrak{s} \rightarrow \mathrm{Der}(\mathfrak{n})$ is just the inclusion. It is well-known fact that the Hochschild-Serre spectral sequence for an extension with semi-simple quotient, in finite dimensional coefficients, will collapse at the second page. This implies that all its characteristic classes are defined and equal zero. If we now use Theorem \ref{th: sah lie}(d) iteratively, starting from $r=2$, we find that all the characteristic classes of the extension in the bottom row are defined and equal to zero for all $t\geq 0, r\geq 2$.
It then follows from Theorem \ref{th: sah lie}(b) that $d_r=0$ for all $r\geq 2$ and all $M \in \mathfrak{h}\mbox{-mod}$.
\end{proof}
\begin{remark} \rm Let $p\geq 3$ and  $\mathfrak{s}$ be an arbitrary finite dimensional semi-simple Lie algebra over a field of characteristic zero. It can be shown (see \cite{Benoist}, \cite{Bajo}) that there exists a $p$-step nilpotent Lie algebra $\mathfrak{n}_1$ and a $p$-step solvable non-nilpotent Lie algebra $\mathfrak{n}_2$ such that $\mathfrak{s}$ is isomorphic to the Levi factor of $\mathrm{Der}(\mathfrak{n}_i)$.
\end{remark}
\begin{corollary} If $\varphi(\mathfrak{h})$ is one-dimensional, then the Hochschild-Serre spectral sequence will collapse at $E_2$ for all coefficients in $\mathfrak{h}$-mod.
\end{corollary}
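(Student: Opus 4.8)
The strategy mirrors that of Corollary~\ref{cor: der}: produce a single ``model'' split extension whose quotient is one-dimensional, observe that its Hochschild--Serre spectral sequence collapses for elementary reasons, and then transport the resulting vanishing of characteristic classes to the extension at hand through the naturality statement Theorem~\ref{th: sah lie}(d); no assumption on $\mathrm{char}\,k$ or on dimensions is needed, the only input being that a one-dimensional Lie algebra has vanishing cohomology in degrees $\geq 2$. Concretely, I would set $\mathfrak{d}:=\varphi(\mathfrak{h})\subseteq\mathrm{Der}(\mathfrak{n})$, a one-dimensional (hence abelian) Lie subalgebra, and write $\iota\colon\mathfrak{d}\hookrightarrow\mathrm{Der}(\mathfrak{n})$ for the inclusion and $\bar\varphi\colon\mathfrak{h}\twoheadrightarrow\mathfrak{d}$ for the corestriction of $\varphi$, so that $\iota\circ\bar\varphi=\varphi$ and the evident maps assemble into a morphism of split extensions
\[ \xymatrix{ 0\ar[r] & \mathfrak{n}\ar[r] & \mathfrak{n}\rtimes_{\iota}\mathfrak{d}\ar[r] & \mathfrak{d}\ar[r] & 0\\ 0\ar[r] & \mathfrak{n}\ar[r]\ar[u]^{\mathrm{Id}} & \mathfrak{n}\rtimes_{\varphi}\mathfrak{h}\ar[r]\ar[u] & \mathfrak{h}\ar[r]\ar[u]^{\bar\varphi} & 0. } \]

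Since $\dim_{k}\mathfrak{d}=1$, the Chevalley--Eilenberg complex of $\mathfrak{d}$ is concentrated in degrees $0$ and $1$, so $\mathrm{H}^{p}(\mathfrak{d},-)=0$ for $p\geq 2$; hence the Hochschild--Serre spectral sequence of $\mathfrak{n}\rtimes_{\iota}\mathfrak{d}$ has $E_{2}^{p,q}=0$ for $p\geq 2$ and, being first-quadrant, collapses at $E_{2}$ for \emph{every} coefficient module. In particular $\mathfrak{n}\rtimes_{\iota}\mathfrak{d}$ is $(t,r)$-trivial for all $t\geq 0$ and $r\geq 2$, and all of its characteristic classes vanish, since already the target group $E_{r}^{r,t-r+1}(\mathrm{H}_{t}(\mathfrak{n},k))$ is zero.

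I would then prove by induction on $r\geq 2$ that $\mathfrak{n}\rtimes_{\varphi}\mathfrak{h}$ is $(t,r)$-trivial for every $t\geq 0$ and that $v_{r}^{t}=0$ for every $t\geq 0$. For $r=2$ the $(t,2)$-triviality is vacuous, so $v_{2}^{t}$ is defined; applying Theorem~\ref{th: sah lie}(d) with the model extension $\mathfrak{n}\rtimes_{\iota}\mathfrak{d}$ in the role of the ambient one, $\bar\varphi\colon\mathfrak{h}\to\mathfrak{d}$ in the role of the map $\sigma$, and $\iota\circ\bar\varphi=\varphi$, one gets that the (zero) characteristic class of $\mathfrak{n}\rtimes_{\iota}\mathfrak{d}$ maps to $v_{2}^{t}$ under the $k$-linear map induced by $\bar\varphi$ on spectral sequences, whence $v_{2}^{t}=0$. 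For the inductive step, assume the assertion for all $p$ with $2\leq p\leq r-1$. For each such $p$, Theorem~\ref{th: sah lie}(b) applies (the extension is $(t,p)$-trivial by induction) and, since $v_{p}^{t}=0$, gives $d_{p}^{s,t}=0$ on $E_{p}^{s,t}(M)$ for all $s$ and all $M\in\mathfrak{h}\mbox{-mod}$, in particular for $M=\mathrm{H}_{t}(\mathfrak{n},k)$; this is precisely $(t,r)$-triviality of $\mathfrak{n}\rtimes_{\varphi}\mathfrak{h}$, so $v_{r}^{t}$ is defined, and Theorem~\ref{th: sah lie}(d) applied verbatim as in the base case gives $v_{r}^{t}=0$, completing the induction.

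Finally, with $(t,r)$-triviality and $v_{r}^{t}=0$ known for all $t\geq 0$ and $r\geq 2$, Theorem~\ref{th: sah lie}(a)--(b) yields, for every $r\geq 2$, every $s,t\geq 0$, every $M\in\mathfrak{h}\mbox{-mod}$ and every $x\in E_{r}^{s,t}(M)$, that $d_{r}^{s,t}(x)=(-1)^{s}\,y\cdot v_{r}^{t}=0$, where $y$ is any preimage of $x$ under the surjection $\theta$ of part~(a); hence all differentials $d_{r}$ with $r\geq 2$ vanish and the spectral sequence collapses at $E_{2}$ for all coefficients in $\mathfrak{h}\mbox{-mod}$, as claimed. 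I expect the only delicate point to be the bookkeeping in the induction: Theorem~\ref{th: sah lie}(d) speaks of $v_{r}^{t}$ only once the extension is known to be $(t,r)$-trivial, so at each stage one must first recover $(t,r)$-triviality from the vanishing obtained at the previous stage (via part~(b)) before invoking naturality (part~(d)); everything else is a straightforward transcription of the argument for Corollary~\ref{cor: der}.
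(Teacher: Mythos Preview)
Your proposal is correct and follows exactly the approach sketched in the paper's proof: you take the one-dimensional image $\mathfrak{d}=\varphi(\mathfrak{h})$ as the model quotient, observe that its spectral sequence degenerates because $\mathrm{H}^{p}(\mathfrak{d},-)=0$ for $p\geq 2$, and then push the vanishing of characteristic classes down to the $\mathfrak{h}$-extension via Theorem~\ref{th: sah lie}(d), bootstrapping $(t,r)$-triviality with part~(b) exactly as in Corollary~\ref{cor: der}. The paper's proof is simply the terse version of what you wrote out in full.
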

\begin{proof} This follows from Theorem \ref{th: sah lie}(d), by an analogous argument as in the proof of Corollary \ref{cor: der} and by observing that extensions with one-dimensional quotients always collapse at the second page.
\end{proof}
\begin{definition} \rm Suppose $k$ is a field of characteristic zero. A finite dimensional Lie algebra $\mathfrak{n}$ is called \emph{reductive} if $\mathfrak{n}$ decomposes as a direct sum of simple $\mathfrak{n}$-modules via the adjoint representation.
\end{definition}
It is a standard fact that when $\mathfrak{n}$ is reductive, there is an isomorphism
$ \mathfrak{n}\cong \mathfrak{a}\oplus \mathfrak{s}$,
where $\mathfrak{a}$ is the center of $\mathfrak{n}$ and $\mathfrak{s}$ is semi-simple. Also, one has that $\mathrm{Der}(\mathfrak{n})\cong \mathrm{Der}(\mathfrak{a})\oplus \mathrm{Der}(\mathfrak{s}) $.
\begin{theorem} \label{cor: reduc}
Suppose $k$ is a field of characteristic zero. Consider the split extension
$0 \rightarrow \mathfrak{n} \rightarrow \mathfrak{g} \rightarrow \mathfrak{h} \rightarrow 0,$
and suppose that $\mathfrak{n}$ is a reductive Lie algebra. Then the Hochschild-Serre spectral sequence associated to this extension will collapse at the second page for all $M \in \mathfrak{h}\mbox{-mod}$.
\end{theorem}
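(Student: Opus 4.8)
The plan is to split $\mathfrak{n}$ into its abelian and semisimple parts and to reduce, via the decomposition theory of Section \ref{sec: decomp}, to a collapse statement for each part separately. Since $k$ has characteristic zero and $\mathfrak{n}$ is reductive, write $\mathfrak{n}\cong\mathfrak{a}\oplus\mathfrak{s}$ with $\mathfrak{a}$ the (abelian) center and $\mathfrak{s}$ semisimple, and recall that $\mathrm{Der}(\mathfrak{n})\cong\mathrm{Der}(\mathfrak{a})\oplus\mathrm{Der}(\mathfrak{s})$. Identifying $\mathfrak{g}\cong\mathfrak{n}\rtimes_{\varphi}\mathfrak{h}$, the homomorphism $\varphi$ then splits as a pair $\varphi_1\colon\mathfrak{h}\to\mathrm{Der}(\mathfrak{a})$ and $\varphi_2\colon\mathfrak{h}\to\mathrm{Der}(\mathfrak{s})$, which is exactly the situation preceding Corollary \ref{cor: factor} with $\mathfrak{n}_1=\mathfrak{a}$ and $\mathfrak{n}_2=\mathfrak{s}$. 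By that corollary it suffices to prove that the Hochschild--Serre spectral sequences of $\mathfrak{a}\rtimes_{\varphi_1}\mathfrak{h}$ and of $\mathfrak{s}\rtimes_{\varphi_2}\mathfrak{h}$ collapse at the second page in coefficients $\mathrm{H}_t(\mathfrak{a},k)$ and $\mathrm{H}_t(\mathfrak{s},k)$ respectively, for every $t\geq0$.

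The abelian summand is handled directly by citing Barnes: since $\mathfrak{a}$ is finite dimensional and abelian, \cite[Th. 3]{Barnes1} gives that the spectral sequence of $\mathfrak{a}\rtimes_{\varphi_1}\mathfrak{h}$ collapses at $E_2$ for \emph{all} coefficients, in particular for $\mathrm{H}_t(\mathfrak{a},k)$.

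For the semisimple summand I would exploit that, in characteristic zero, every derivation of $\mathfrak{s}$ is inner, so $\mathrm{ad}_{\mathfrak{s}}\colon\mathfrak{s}\xrightarrow{\cong}\mathrm{Der}(\mathfrak{s})$ and $\psi:=\mathrm{ad}_{\mathfrak{s}}^{-1}\circ\varphi_2\colon\mathfrak{h}\to\mathfrak{s}$ is a Lie algebra homomorphism. A short computation then shows that $p\colon\mathfrak{s}\rtimes_{\varphi_2}\mathfrak{h}\to\mathfrak{s}$, $(x,h)\mapsto x+\psi(h)$, is a Lie algebra homomorphism splitting the inclusion $\mathfrak{s}\hookrightarrow\mathfrak{s}\rtimes_{\varphi_2}\mathfrak{h}$, so that $\mathfrak{s}$ is a retract of $\mathfrak{s}\rtimes_{\varphi_2}\mathfrak{h}$. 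Moreover, since the $\mathfrak{h}$-action on $\mathfrak{s}$ factors through $\mathrm{ad}_{\mathfrak{s}}$ and a Lie algebra acts trivially on its own (co)homology (Cartan's formula makes the relevant Lie derivatives null-homotopic on the Chevalley--Eilenberg complex), $N:=\mathrm{H}_t(\mathfrak{s},k)$ is a trivial module over $\mathfrak{s}\rtimes_{\varphi_2}\mathfrak{h}$; in particular $N^{\mathfrak{s}}=N$. Functoriality of $\mathrm{H}^{\ast}(-,N)$ together with $p$ then shows that the restriction $\mathrm{H}^{\ast}(\mathfrak{s}\rtimes_{\varphi_2}\mathfrak{h},N)\to\mathrm{H}^{\ast}(\mathfrak{s},N)$ is split surjective, and since this restriction is precisely the edge homomorphism $\mathrm{H}^{t}(\mathfrak{s}\rtimes_{\varphi_2}\mathfrak{h},N)\to E^{0,t}_{\infty}(N)\hookrightarrow E^{0,t}_2(N)=\mathrm{H}^{t}(\mathfrak{s},N)$, we conclude $E^{0,t}_{\infty}(N)=E^{0,t}_2(N)$, i.e.\ $d^{0,t}_m$ vanishes on $E^{0,t}_m(N)$ for all $m\geq2$. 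Running this over all $t$ and invoking the corollary immediately following Theorem \ref{th: sah lie} (the edge-differential criterion), the spectral sequence of $\mathfrak{s}\rtimes_{\varphi_2}\mathfrak{h}$ collapses at $E_2$ for all coefficients in $\mathfrak{h}\mbox{-mod}$. Feeding the two cases back into Corollary \ref{cor: factor} finishes the proof.

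The step I expect to be the main obstacle is the semisimple case: one must realize that collapse here is \emph{not} caused by vanishing of $\mathrm{H}^{\ast}(\mathfrak{s},-)$ (the higher cohomology of $\mathfrak{s}$ with trivial coefficients is nonzero), and instead extract it from two structural features of the situation — that the action is inner, which produces the retraction and hence the surjectivity of the edge map onto the fibre axis, and that $\mathrm{H}_t(\mathfrak{s},k)$ carries the trivial $\mathfrak{h}$-action. The remaining ingredients — the decompositions of $\mathfrak{n}$ and of $\mathrm{Der}(\mathfrak{n})$, the appeal to Barnes, and the bookkeeping through Corollary \ref{cor: factor} and the edge-differential criterion — are routine.
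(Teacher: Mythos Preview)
Your argument is correct, and the overall scaffolding (split $\mathfrak{n}=\mathfrak{a}\oplus\mathfrak{s}$, invoke Barnes for $\mathfrak{a}$, combine via Corollary \ref{cor: factor}) matches the paper. The genuine difference is in how you handle the semisimple piece. The paper passes first to the ``universal'' quotient: since $\mathrm{Der}(\mathfrak{s})\cong\mathfrak{s}$ is itself semi-simple, the extension $\mathfrak{s}\to\mathfrak{s}\rtimes\mathrm{Der}(\mathfrak{s})\to\mathrm{Der}(\mathfrak{s})$ has semi-simple quotient and therefore collapses at $E_2$ by the classical result (the same input as in Corollary \ref{cor: der}); Corollary \ref{cor: factor} then gives collapse for $\mathfrak{n}\rtimes\mathrm{Der}(\mathfrak{n})$, and a final application of Theorem \ref{th: sah lie}(d) to $\varphi\colon\mathfrak{h}\to\mathrm{Der}(\mathfrak{n})$ transfers this to the given $\mathfrak{h}$. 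You instead stay with the given $\mathfrak{h}$ throughout and treat $\mathfrak{s}\rtimes_{\varphi_2}\mathfrak{h}$ directly: the inner-derivation isomorphism produces $\psi\colon\mathfrak{h}\to\mathfrak{s}$ and the Lie-algebra retraction $p(x,h)=x+\psi(h)$, and together with the triviality of the $\mathfrak{h}$-action on $\mathrm{H}_t(\mathfrak{s},k)$ this forces the edge map onto the fibre axis to be surjective, killing all $d_m^{0,t}$ and feeding into the edge-differential criterion. Your route is more self-contained --- it does not appeal to the collapse theorem for semi-simple quotients (hence to Weyl's complete reducibility) --- at the cost of the extra check that the fibre homology carries the trivial action; the paper's route is shorter and exploits the machinery already set up in Corollary \ref{cor: der} and Theorem \ref{th: sah lie}(d).
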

\begin{proof} Let $ \mathfrak{n}\cong \mathfrak{a}\oplus \mathfrak{s}$,  where $\mathfrak{a}$ is the center and $\mathfrak{s}$ is semi-simple. By a result of Barnes (see \cite{Barnes1}), it follows that the spectral sequence associated to a finite dimensional split Lie algebra extension with abelian kernel always collapses at the second page. For the semi-simple Lie algebra $\mathfrak{s}$, one has $\mathfrak{s}\cong \mathrm{Der}(\mathfrak{s})$. So, the collapse of
the spectral sequence of the extension $0 \rightarrow \mathfrak{s} \rightarrow \mathfrak{s}\rtimes \mathrm{Der}(\mathfrak{s}) \rightarrow \mathrm{Der}(\mathfrak{s}) \rightarrow 0$ follows immediately. Combining these results with Corollary \ref{cor: factor}, shows that the Hochschild-Serre spectral sequence associated to the extension  $0 \rightarrow \mathfrak{n} \rightarrow\mathfrak{n}\rtimes \mathrm{Der}(\mathfrak{n}) \rightarrow \mathrm{Der}(\mathfrak{n}) \rightarrow 0$ collapses at the second page. Applying Theorem \ref{th: sah lie}(d) finishes the proof.
\end{proof}

\section{The group case} \label{sec: group}
Recall that the integral group ring $\mathbb{Z}[G]$ of a group $G$ is a Hopf algebra and that the group ring functor maps (split) group extensions to (split) short exact sequences of Hopf algebras. Note that all the necessary definitions and results from sections \ref{sec: pre} and \ref{sec: cohom} remain valid if we work over a principal ideal domain. Now,
consider the split group extension

\begin{equation}\label{eq: splitgroupextension}\xymatrix {&0 \ar[r] &N\ar[r] & G \ar[r]^{\pi} & H
\ar@/_1.5pc/[l]\ar[r] &0}.
\end{equation}
The extension induces a group homomorphism
$\varphi: H \rightarrow \mathrm{Aut}(N)$,
such that $G$ is isomorphic to the semi-direct product
$G\cong N \rtimes_{\varphi} H$.
Also, to this group extension we associate the {Lyndon-Hochschild-Serre spectral sequence} for every $M \in G\mbox{-mod}$.
Just as in the case of Lie algebras, we will identify $H\mbox{-mod}$ with $\{ M \in G\mbox{-mod} \ | \ M^{N}=M \}$, when considering extensions as the one above.
Next, we explain what we mean by a $(t,r)$-trivial split group extension.
\begin{definition}\rm
Let $t\geq 0, r\geq 2$. We call a split group extension \emph{$(t,r)$-trivial} if the associated split short exact sequence of group rings is $(t,r)$-trivial and if
$\mathrm{H}_{t-1}(N,\mathbb{Z})$ is $\mathbb{Z}$-free.
\end{definition}
Now, let $M$ be an $H$-module and assume that (\ref{eq: splitgroupextension}) is a $(t,r)$-trivial extension for given $t\geq 0$ and $r\geq 2$. Since $\mathrm{H}_{t-1}(N,\mathbb{Z})$ is $\mathbb{Z}$-free, the Universal Coefficient Theorem entails an isomorphism of $H$-modules $\mathrm{H}^t(N,M) \cong \mathrm{Hom}_{\mathbb{Z}}(\mathrm{H}_t(N,\mathbb{Z}),M)$. This isomorphism gives us a non-degenerate $H$-pairing
\[ \mathrm{H}^t(N,M)\otimes_\mathbb{Z} \mathrm{H}_t(N,\mathbb{Z}) \rightarrow M. \]
Hence, Theorem \ref{th: sah} remains valid in this setting.
\begin{theorem}\label{th: sahgroup} Let $t\geq0$ and $r\geq2$ and suppose (\ref{eq: splitgroupextension}) is $(t,r)$-trivial split extension, then the following holds.
\begin{itemize}
\item[(a)] For all $s\geq 0$ and for all $M \in H\mbox{-mod}$, there is a canonical surjective homomorphism
\[ \theta: E_r^{s,0}(\mathrm{H}^t(N,M)) \rightarrow E_r^{s,t}(M).\]
\item[(b)] The characteristic class $v_r^t \in E_r^{r,t-r+1}(\mathrm{H}_t(N,\mathbb{Z}))$  has the property
\[ d_r^{s,t}(x)= (-1)^s y \cdot v_r^t    \]
$\forall s\geq0, \forall M \in H\mbox{-mod}, \forall x \in E_r^{s,t}(M)   \mbox{and} \ \forall y \in  E_r^{s,0}(\mathrm{H}^t(N,M)) \ \mbox{with} \ \theta(y)=x $.
\item[(c)] $v_r^t$ is completely determined by the previous property.
\item[]
\item[(d)]  Let $\sigma: P \rightarrow H$ be a group homomorphism and set $\varphi'= \varphi\circ \sigma$. Assume the split extension associated to $N \rtimes_{\varphi'} P $ is $(t,r)$-trivial and denote its characteristic classes by $w_r^t$. Then $v_r^t$ maps to $w_r^t$ under the map induced by $\sigma$ on the spectral sequences. In particular,  $v_2^t$ maps to $w_2^t$.
\end{itemize}
\end{theorem}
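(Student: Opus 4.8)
The plan is to deduce the theorem from Theorem \ref{th: sah} by passing through the group ring functor, the only genuinely new ingredient being a careful treatment of the ground ring $\mathbb{Z}$. First I would apply $\mathbb{Z}[-]$ to (\ref{eq: splitgroupextension}), obtaining a split short exact sequence of Hopf algebras $\mathbb{Z}[N]\to\mathbb{Z}[G]\to\mathbb{Z}[H]$ over the principal ideal domain $\mathbb{Z}$. Since the constructions of Sections \ref{sec: pre} and \ref{sec: cohom} are valid over any PID --- in particular $\mathbb{Z}[G]$ is free over $\mathbb{Z}[N]$ on a set of coset representatives, as in Remark \ref{remark: some remarks} --- this yields the Grothendieck spectral sequence (\ref{eq: spec seq}), which here is precisely the Lyndon-Hochschild-Serre spectral sequence, together with the identification of $H\mbox{-mod}$ with $\{M\in G\mbox{-mod}\mid M^N=M\}$. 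By definition, $(t,r)$-triviality of the group extension means $(t,r)$-triviality of this Hopf algebra sequence together with the $\mathbb{Z}$-freeness of $\mathrm{H}_{t-1}(N,\mathbb{Z})$.

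The one place where $\mathbb{Z}$ behaves differently from a field is Lemma \ref{prop: pairing}, and dealing with it is the crux of the argument. Over $\mathbb{Z}$ the Universal Coefficient Theorem yields only a natural short exact sequence $0\to\mathrm{Ext}^1_{\mathbb{Z}}(\mathrm{H}_{t-1}(N,\mathbb{Z}),M)\to\mathrm{H}^t(N,M)\to\mathrm{Hom}_{\mathbb{Z}}(\mathrm{H}_t(N,\mathbb{Z}),M)\to 0$; the freeness hypothesis built into $(t,r)$-triviality kills the $\mathrm{Ext}^1_{\mathbb{Z}}$ term, so I recover the $H$-module isomorphism $\mathrm{H}^t(N,M)\cong\mathrm{Hom}_{\mathbb{Z}}(\mathrm{H}_t(N,\mathbb{Z}),M)$ already noted before the statement, and hence the non-degenerate $H$-pairing $\mathrm{H}^t(N,M)\otimes_{\mathbb{Z}}\mathrm{H}_t(N,\mathbb{Z})\to M$. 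Exactly as in Section \ref{sec: cohom}, this pairing together with the cup product on the cohomology of the cocommutative Hopf algebras $\mathbb{Z}[N],\mathbb{Z}[G],\mathbb{Z}[H]$ induces a spectral sequence pairing obeying the usual Leibniz rule for the differentials.

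Once the pairing is in place, parts (a)--(c) become a transcription of the proof of Theorem \ref{th: sah}: (a) follows from the canonical isomorphism $E_2^{s,0}(\mathrm{H}^t(N,M))\cong\mathrm{H}^s(H,\mathrm{H}^t(N,M))\cong E_2^{s,t}(M)$, the stabilization of the $(s,0)$-column under the section, and the fact that $E_r^{s,t}(M)$ is a quotient of $E_2^{s,t}(M)$ for $M\in\mathscr{M}_r^t(N,G)$; (b) follows from $\theta(y)=x\Rightarrow y\cdot[id^t]=x$, the Leibniz rule $d_r^{s,t}(y\cdot[id^t])=d_r^{s,0}(y)\cdot[id^t]+(-1)^sy\cdot d_r^{0,t}([id^t])$, and the vanishing of the target of $d_r^{s,0}$; (c) follows from the special case $s=0$, $M=\mathrm{H}_t(N,\mathbb{Z})$, in which $\theta$ is the identity. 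For (d) I would observe that $\sigma\colon P\to H$ induces a Hopf algebra map $\mathbb{Z}[P]\to\mathbb{Z}[H]$ and that, since group rings are cocommutative, Molnar's theorem (\cite{Molnar}) makes $\mathbb{Z}[N]\rtimes\mathbb{Z}[P]\cong\mathbb{Z}[N\rtimes_{\varphi'}P]$ a Hopf algebra with the standard coalgebra structure, so that Theorem \ref{th: sah}(d) applies directly and $v_r^t$ maps to $w_r^t$ under the induced map of spectral sequences. I do not expect any real obstacle: the sole subtlety, and the reason the definition of $(t,r)$-triviality for group extensions carries the extra freeness requirement, is the $\mathrm{Ext}^1_{\mathbb{Z}}$ term above; everything else is formal once one is working over a PID.
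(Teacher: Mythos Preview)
Your proposal is correct and follows exactly the paper's approach: the paper notes just before the theorem that the freeness of $\mathrm{H}_{t-1}(N,\mathbb{Z})$ gives the Universal Coefficient isomorphism and hence the non-degenerate $H$-pairing, then simply declares that ``Theorem \ref{th: sah} remains valid in this setting'' and remarks that this is essentially Sah's Theorem 3. Your write-up is considerably more explicit than the paper's, but the strategy and all the ingredients are identical.
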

This is essentially the statement of Theorem 3 of Sah in \cite{Sah}. The only difference is that we expand to split extensions where the kernel is no longer required to be an integral lattice, but just to have a $\mathbb{Z}$-free integral homology group in a specific dimension.

\begin{corollary} Let $N$ be a group with $\mathbb{Z}$-free integral homology.
Then the Lyndon-Hochschild-Serre spectral sequence of the split extension (\ref{eq: splitgroupextension})
collapses at the second page for all coefficients $M \in H\mbox{-mod}$ if and only if the edge differentials
\[ d_m^{0,t}: E_m^{0,t}(\mathrm{H}_t(N,\mathbb{Z})) \rightarrow E_m^{m,t-m+1}(\mathrm{H}_t(N,\mathbb{Z})) \]
are zero for all $t\geq 0$  and $m \geq 2$.
In particular,  the spectral sequence will collapse at the second page for all coefficients $M \in H\mbox{-mod}$ if and only if it collapses with coefficients $\mathrm{H}_t(N,\mathbb{Z})$, for all $t\geq 0$.
\end{corollary}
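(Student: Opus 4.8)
The plan is to deduce this corollary formally from Theorem \ref{th: sahgroup}, following verbatim the pattern of the corollaries that come after Theorems \ref{th: sah} and \ref{th: sah lie}. The hypothesis that $N$ has $\mathbb{Z}$-free integral homology will be used only to dispose of the $\mathbb{Z}$-freeness clause in the definition of $(t,r)$-triviality: since $\mathrm{H}_{t-1}(N,\mathbb{Z})$ is $\mathbb{Z}$-free for every $t\geq 0$, a split extension of the form (\ref{eq: splitgroupextension}) is $(t,r)$-trivial precisely when the differentials $d_p^{s,t}$ with source $E_p^{s,t}(\mathrm{H}_t(N,\mathbb{Z}))$ vanish for all $s$ and all $2\le p\le r-1$. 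So throughout the proof ``$(t,r)$-trivial'' can be read purely as a vanishing condition on differentials in the spectral sequence with coefficients $\mathrm{H}_t(N,\mathbb{Z})$.

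First I would treat the easy implication. Because $\mathrm{H}_t(N,\mathbb{Z})$ carries the trivial $N$-action it lies in $H\mbox{-mod}$, so if the Lyndon--Hochschild--Serre spectral sequence collapses at $E_2$ for all $H$-modules, then in particular it collapses with coefficients $\mathrm{H}_t(N,\mathbb{Z})$; hence every differential, and in particular the edge differential $d_m^{0,t}\colon E_m^{0,t}(\mathrm{H}_t(N,\mathbb{Z}))\to E_m^{m,t-m+1}(\mathrm{H}_t(N,\mathbb{Z}))$, is zero for all $t\geq 0$ and $m\geq 2$.

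For the converse I would argue by induction on $r$ that, assuming all edge differentials $d_m^{0,t}$ vanish, the extension (\ref{eq: splitgroupextension}) is $(t,r)$-trivial for every $t\geq 0$. The base case $r=2$ is vacuous once the $\mathbb{Z}$-freeness noted above is recorded. For the inductive step, assuming $(t,r)$-triviality for all $t$, the characteristic class $v_r^t=d_r^{0,t}([id^t])\in E_r^{r,t-r+1}(\mathrm{H}_t(N,\mathbb{Z}))$ is defined, and it is exactly the value of the edge differential $d_r^{0,t}$ on the identity class, hence is zero by hypothesis; Theorem \ref{th: sahgroup}(b) then forces $d_r^{s,t}(x)=(-1)^s y\cdot v_r^t=0$ for all $s$, all $M\in H\mbox{-mod}$ and all $x\in E_r^{s,t}(M)$, so the extension is $(t,r+1)$-trivial. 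Letting $r$ and $t$ range over all values yields $d_r^{s,t}=0$ for every $r\geq 2$, every $s$ and every $M$, i.e. collapse at $E_2$ for all $M\in H\mbox{-mod}$.

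The ``in particular'' clause then follows by combining the two directions: collapse with coefficients $\mathrm{H}_t(N,\mathbb{Z})$ for every $t$ makes all the edge differentials $d_m^{0,t}$ vanish, whence collapse for all $M$ by the converse just proved, while the reverse implication is trivial since $\mathrm{H}_t(N,\mathbb{Z})\in H\mbox{-mod}$. I do not anticipate a genuine obstacle: the substance is entirely absorbed into Theorem \ref{th: sahgroup}, and the only points demanding care are (i) that the induction propagates both halves of the statement (``$(t,r)$-trivial'' and ``$v_m^t=0$ for $2\le m<r$'') at once, and (ii) the bookkeeping of the $\mathbb{Z}$-freeness hypothesis, which, as observed, is supplied once and for all by the assumption on $N$ and is also what makes the identification of $v_m^t$ with the edge differential at $[id^t]$ legitimate.
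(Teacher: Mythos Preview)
Your proposal is correct and matches the paper's (implicit) approach: the paper states this corollary without proof, treating it as an immediate inductive consequence of Theorem~\ref{th:\ sahgroup}, just as the analogous corollaries in Sections~\ref{sec: char classes} and~\ref{sec: lie algebra} are stated without proof after the remark ``Using this theorem inductively, one can easily see\ldots''. Your write-up simply spells out that induction, and your handling of the $\mathbb{Z}$-freeness clause is exactly the point of the hypothesis on $N$.
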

From now on, let us assume that $N$ is a product of two groups $N_1$ and $N_2$, both with $\mathbb{Z}$-free integral homology,
and that we have two group homomorphisms
$\varphi_1 : H \rightarrow \mathrm{Aut}(N_1)$ and
$\varphi_2 : H \rightarrow \mathrm{Aut}(N_2)$,
which give rise to a group homomorphism
$\varphi: H \rightarrow \mathrm{Aut}(N)$.
We obtain the split group extensions
\[ 0 \rightarrow N_i \rightarrow N_i\rtimes_{\varphi_i} H \rightarrow H \rightarrow 0 \]
for $i=1,2$, and
\[ 0 \rightarrow N \rightarrow N \rtimes_{\varphi} H \rightarrow H \rightarrow 0. \]
By applying the group ring functor to these extensions and by noting that the K\"{u}nneth formula for homology remains valid, we can reformulate the decomposition theorem in terms of split group extensions. This is a generalization of the result obtained in \cite{Petrosyan}.
\begin{theorem}\label{th: decompgroup} Let $t\geq 0$ and $r\geq 2$.
Suppose the characteristic classes ${}^{1}v_p^i$ and ${}^{2}v_p^j$, of ${}^{1}E_{\ast}(\mathrm{H}_i(N_1,\mathbb{Z}))$ and  ${}^{2}E_{\ast}(\mathrm{H}_j(N_2,\mathbb{Z}))$ respectively, are zero for all $i,j\leq t$ and $2\leq p\leq r-1$. Then the split extension
$0 \rightarrow N \rightarrow G \rightarrow H \rightarrow 0$
is $(t,r)$-trivial. Furthermore, we have a decomposition formula
\[ v_r^t=\sum_{i+j=m} \Big( P_{i,j}({}^{1}v_r^i\otimes [{}^{2}id^j])+ (-1)^i P_{i,j}([{}^{1}id^i]\otimes {}^{2}v_r^j)\Big). \]
\end{theorem}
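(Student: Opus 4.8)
The plan is to deduce this from the Hopf-algebra decomposition theorem, Theorem \ref{th: decomp}, by applying the group ring functor and then verifying that the passage from a field to the principal ideal domain $\mathbb{Z}$ introduces no obstruction beyond the $\mathbb{Z}$-freeness already built into the definition of a $(t,r)$-trivial split group extension.

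First I would apply $\mathbb{Z}[-]$ to the three split group extensions $0 \to N_l \to N_l \rtimes_{\varphi_l} H \to H \to 0$ for $l=1,2$ and $0 \to N \to N \rtimes_\varphi H \to H \to 0$, obtaining split short exact sequences of Hopf algebras over $\mathbb{Z}$ (recall that the material of Sections \ref{sec: pre} and \ref{sec: cohom} is valid over a PID). Since $\mathbb{Z}[N] = \mathbb{Z}[N_1 \times N_2] \cong \mathbb{Z}[N_1] \otimes_\mathbb{Z} \mathbb{Z}[N_2]$ as Hopf algebras, and the $\mathbb{Z}[H]$-action on this tensor product is the one induced by $\varphi_1$ and $\varphi_2$, we land precisely in the setup of Section \ref{sec: decomp} with $A_l = \mathbb{Z}[N_l]$, $A = \mathbb{Z}[N]$, $C = \mathbb{Z}[H]$, and $A \rtimes C \cong \mathbb{Z}[N \rtimes_\varphi H] = \mathbb{Z}[G]$. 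Under these identifications the spectral sequences ${}^{l}E_*$ and $E_*$ become the Lyndon--Hochschild--Serre spectral sequences of the respective group extensions, and $\mathrm{H}_*(A_l,\mathbb{Z}) = \mathrm{H}_*(N_l,\mathbb{Z})$, $\mathrm{H}_*(A,\mathbb{Z}) = \mathrm{H}_*(N,\mathbb{Z})$ as group homology.

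Next I would check that the homological inputs of Section \ref{sec: decomp} survive over $\mathbb{Z}$. Because $N_1$ and $N_2$ have $\mathbb{Z}$-free integral homology, the K\"{u}nneth formula carries no $\mathrm{Tor}$ correction term, so $\mathrm{H}_n(N,\mathbb{Z}) \cong \bigoplus_{i+j=n} \mathrm{H}_i(N_1,\mathbb{Z}) \otimes_\mathbb{Z} \mathrm{H}_j(N_2,\mathbb{Z})$; this supplies the direct-sum decomposition of the coefficient module used to split $id^t$, and it shows in particular that every $\mathrm{H}_n(N,\mathbb{Z})$, hence $\mathrm{H}_{t-1}(N,\mathbb{Z})$, is $\mathbb{Z}$-free. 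The $\mathbb{Z}$-freeness of $\mathrm{H}_{t-1}(N_l,\mathbb{Z})$ likewise lets the Universal Coefficient Theorem yield the non-degenerate $H$-pairings $\mathrm{H}^t(N_l,M) \cong \mathrm{Hom}_\mathbb{Z}(\mathrm{H}_t(N_l,\mathbb{Z}),M)$ of Lemma \ref{prop: pairing}, hence the spectral-sequence pairings and the maps ${}^{1}\Phi$, ${}^{2}\Phi$ and $P_{i,j}$, exactly as in the field case; in particular the classes ${}^{l}v_p^j$ are defined for $j \le t$ and $2 \le p \le r-1$ (inductively, via Theorem \ref{th: sah} applied page by page, which only needs $\mathbb{Z}$-freeness in the relevant degrees). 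With these verifications the proof of Theorem \ref{th: decomp} applies essentially verbatim with ground ring $\mathbb{Z}$.

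Finally, feeding the hypotheses into Theorem \ref{th: decomp} gives that $A \to A \rtimes C \to C$ is $(t,r)$-trivial together with the decomposition formula for $v_r^t$; translating back through $A \rtimes C \cong \mathbb{Z}[G]$ and recording that $\mathrm{H}_{t-1}(N,\mathbb{Z})$ is $\mathbb{Z}$-free yields the asserted $(t,r)$-triviality of $0 \to N \to G \to H \to 0$ and the stated decomposition formula. The only genuine work is in the third paragraph --- confirming that each appeal in Section \ref{sec: decomp} to the K\"{u}nneth formula, to the Universal Coefficient Theorem, and to the resulting pairings goes through over $\mathbb{Z}$ under exactly the $\mathbb{Z}$-freeness assumptions on $\mathrm{H}_*(N_1,\mathbb{Z})$ and $\mathrm{H}_*(N_2,\mathbb{Z})$; once that is done, the rest is transcription.
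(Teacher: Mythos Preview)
Your proposal is correct and follows exactly the approach the paper takes: the paper does not give a separate proof of this theorem but simply notes that ``by applying the group ring functor to these extensions and by noting that the K\"{u}nneth formula for homology remains valid, we can reformulate the decomposition theorem in terms of split group extensions.'' Your write-up is in fact more careful than the paper in verifying that the passage from a field to the PID $\mathbb{Z}$ goes through under the $\mathbb{Z}$-freeness hypotheses on $\mathrm{H}_*(N_l,\mathbb{Z})$.
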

\begin{corollary} \label{cor: factorgroup}Suppose the Lyndon-Hochschild-Serre spectral sequences of $N_1 \rtimes_{\varphi_1} H$ and  $N_2 \rtimes_{\varphi_2} H$ collapse at the second page, in coefficients $\mathrm{H}_t(N_1,\mathbb{Z})$ and $\mathrm{H}_t(N_2,\mathbb{Z})$ respectively, for each $t\geq 0$. Then the Lyndon-Hochschild-Serre spectral sequence of $N \rtimes_{\varphi} H$ will collapse at the second page, for all coefficients $M \in H\mbox{-mod}$.
\end{corollary}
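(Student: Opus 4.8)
The plan is to deduce this directly from the decomposition theorem, Theorem \ref{th: decompgroup}, in exact analogy with the proof of Corollary \ref{cor: factor} in the Lie algebra setting. First I would record that $N = N_1 \times N_2$ automatically has $\mathbb{Z}$-free integral homology: since $\mathrm{H}_\ast(N_1,\mathbb{Z})$ and $\mathrm{H}_\ast(N_2,\mathbb{Z})$ are $\mathbb{Z}$-free, all the $\mathrm{Tor}$ terms in the K\"{u}nneth formula vanish and $\mathrm{H}_n(N,\mathbb{Z}) \cong \bigoplus_{i+j=n} \mathrm{H}_i(N_1,\mathbb{Z}) \otimes_{\mathbb{Z}} \mathrm{H}_j(N_2,\mathbb{Z})$ is $\mathbb{Z}$-free for every $n$. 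In particular the $\mathbb{Z}$-freeness clause in the definition of $(t,r)$-triviality is never an obstruction, neither for $N \rtimes_\varphi H$ nor for the factors $N_i \rtimes_{\varphi_i} H$.

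Next I would translate the collapse hypothesis into the language of characteristic classes. If the Lyndon-Hochschild-Serre spectral sequence of $N_i \rtimes_{\varphi_i} H$ collapses at the second page with coefficients $\mathrm{H}_{t}(N_i,\mathbb{Z})$, then every differential out of that spectral sequence vanishes; hence $N_i \rtimes_{\varphi_i} H$ is $(t,r)$-trivial for all $r \geq 2$ and all $t \geq 0$, and the class ${}^iv_r^t = {}^id_r^{0,t}([{}^iid^t])$ is defined and equal to zero, for $i \in \{1,2\}$, all $r \geq 2$ and all $t \geq 0$.

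Now I apply Theorem \ref{th: decompgroup}. Fix $t \geq 0$ and $r \geq 2$. Since ${}^1v_p^i = 0$ and ${}^2v_p^j = 0$ for all $i,j \leq t$ and all $2 \leq p \leq r-1$, the theorem guarantees that $0 \to N \to G \to H \to 0$ is $(t,r)$-trivial and that
\[ v_r^t=\sum_{i+j=t} \Big( P_{i,j}({}^{1}v_r^i\otimes [{}^{2}id^j])+ (-1)^i P_{i,j}([{}^{1}id^i]\otimes {}^{2}v_r^j)\Big). \]
Each summand carries a factor ${}^1v_r^i$ or ${}^2v_r^j$, all of which vanish by the previous step, so $v_r^t = 0$. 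As $t$ and $r$ were arbitrary, all characteristic classes of $0 \to N \to G \to H \to 0$ are defined and equal to zero, and therefore, by Theorem \ref{th: sahgroup}(b) (equivalently, by the edge-differential criterion stated just after Theorem \ref{th: sahgroup}, whose hypothesis that $N$ has $\mathbb{Z}$-free integral homology is met by the first step), every differential $d_r^{s,t}$ of the Lyndon-Hochschild-Serre spectral sequence of $N \rtimes_\varphi H$ vanishes for every $M \in H\mbox{-mod}$; that is, the spectral sequence collapses at the second page.

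I do not expect a genuine obstacle: all the substance is already contained in Theorem \ref{th: decompgroup} and Theorem \ref{th: sahgroup}. The only points requiring a little care are the K\"{u}nneth computation that places $N$ inside the hypothesis demanding $\mathbb{Z}$-free integral homology, and the elementary observation that collapse of a single spectral sequence forces all of its characteristic classes to be simultaneously defined and zero, which is exactly the input Theorem \ref{th: decompgroup} consumes.
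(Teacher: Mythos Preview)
Your proposal is correct and follows exactly the route the paper takes: the paper's proof consists of the single sentence ``This is immediate from Theorem \ref{th: decompgroup},'' and you have simply unpacked what that immediacy entails (the K\"{u}nneth check for $\mathbb{Z}$-freeness of $\mathrm{H}_\ast(N,\mathbb{Z})$, the translation of collapse into vanishing of the ${}^iv_p^j$, and the appeal to Theorem \ref{th: sahgroup}(b)).
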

\begin{proof} This is immediate from Theorem  \ref{th: decompgroup}.
\end{proof}
\begin{remark} \rm Note that the preceding results remain valid when $\mathbb{Z}$ is replaced by any principal ideal domain. For example, if we work over a field $F$, the split group extension $0\rightarrow N \rightarrow G \rightarrow H \rightarrow 0$ is \emph{$(t,r)$-trivial} if the associated split short exact sequence of group rings  $F[N] \rightarrow F[G] \rightarrow F[H]$ is $(t,r)$-trivial.
\end{remark}
From now on, we restrict to the extension
\begin{equation}\label{eq: splitgroupextensionlattice}0 \rightarrow L \rightarrow G=L \rtimes_{\varphi}H \rightarrow  H \rightarrow 0,
\end{equation}
where $L$ is an $n$-dimensional integral lattice, and $\varphi: H \rightarrow \mathrm{GL}(n,\mathbb{Z})$
is an integral representation of $H$.

Since $\mathrm{H}_t(L,\mathbb{Z})\cong\Lambda^t(L)$ for all $t$, $L$ has $\mathbb{Z}$-free homology. Hence, as before, we can define the characteristic classes. It turns out that all characteristic classes will have finite order.
\begin{lemma}{\normalfont(Liebermann,\cite{Sah})} Let $t\geq r\geq2$. Consider the extension (\ref{eq: splitgroupextensionlattice}), let $M$ be a finite dimensional $\mathbb{Z}$-free $H$-module and denote the associated Lyndon-Hochschild-Serre spectral sequence by $(E_{\ast}(M),d_{\ast})$. Then, for each $s\geq 0$, the image of $d_r^{s,t}$ is a torsion group annihilated by the integers $m^{t-r+1}(m^{r-1}-1)$,
for all $m \in \mathbb{Z}$  (when $t<r$, $d_r^{s,t}=0$).
\end{lemma}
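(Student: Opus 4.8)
The plan is to exploit an extra endomorphism of the extension (\ref{eq: splitgroupextensionlattice}) coming from multiplication by an integer on the lattice. First I would observe that, for any $m\in\mathbb{Z}$, the map $\cdot m:L\to L$ is $H$-equivariant, since every $\varphi(h)\in\mathrm{GL}(n,\mathbb{Z})$ acts $\mathbb{Z}$-linearly on $L$ and hence commutes with scalar multiplication. Consequently $\mu_m:G\to G$, $(\ell,h)\mapsto(m\ell,h)$, is a group homomorphism (a direct check using $(\ell_1,h_1)(\ell_2,h_2)=(\ell_1+\varphi(h_1)\ell_2,h_1h_2)$ and the linearity of $\varphi(h_1)$), and it fits into a commutative ladder of group extensions
\[ \xymatrix{ 0 \ar[r] & L \ar[r] \ar[d]_{\cdot m} & G \ar[r] \ar[d]^{\mu_m} & H \ar[r] \ar[d]^{\mathrm{Id}} & 0 \\ 0 \ar[r] & L \ar[r] & G \ar[r] & H \ar[r] & 0.} \]
Since $\mu_m$ is the identity on the quotient $H$ and $L$ acts trivially on $M$, the pullback $\mu_m^{*}M$ agrees with $M$ as a $G$-module, so $\mu_m$ induces an endomorphism of the Lyndon--Hochschild--Serre spectral sequence $(E_{\ast}(M),d_{\ast})$, compatible with all differentials.

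Next I would compute the effect of this endomorphism page by page. On the fibre, $\cdot m:L\to L$ induces on $\mathrm{H}_t(L,\mathbb{Z})\cong\Lambda^t(L)$ the $t$-th exterior power of multiplication by $m$, namely multiplication by $m^{t}$. By the Universal Coefficient Theorem (valid since $\mathrm{H}_{t-1}(L,\mathbb{Z})$ is $\mathbb{Z}$-free) together with the identification $\mathrm{H}^t(L,M)\cong\mathrm{Hom}_{\mathbb{Z}}(\mathrm{H}_t(L,\mathbb{Z}),M)$, the induced map on $\mathrm{H}^t(L,M)$ is again multiplication by $m^{t}$. Applying $\mathrm{H}^s(H,-)$ and using functoriality, the endomorphism induced by $\mu_m$ on $E_2^{s,t}(M)=\mathrm{H}^s(H,\mathrm{H}^t(L,M))$ is multiplication by $m^{t}$; since multiplication by an integer descends to subquotients, the same holds on $E_r^{s,t}(M)$ for every $r\geq2$.

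The annihilation statement then follows from naturality of $d_r$. Since the endomorphism commutes with $d_r^{s,t}:E_r^{s,t}(M)\to E_r^{s+r,t-r+1}(M)$, and acts as multiplication by $m^{t}$ on the source and by $m^{t-r+1}$ on the target, we obtain $m^{t}\,d_r^{s,t}=m^{t-r+1}\,d_r^{s,t}$, hence $m^{t-r+1}(m^{r-1}-1)\,d_r^{s,t}=0$. Thus $\mathrm{im}(d_r^{s,t})$ is annihilated by $m^{t-r+1}(m^{r-1}-1)$ for all $m\in\mathbb{Z}$; taking $m=2$ gives the nonzero integer $2^{t-r+1}(2^{r-1}-1)$ whenever $t\geq r\geq2$, so the image is torsion. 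When $t<r$, either $t-r+1<0$ and the target lies outside the first quadrant and vanishes, or $t=r-1$ and $d_r^{\ast,r-1}=0$ by the splitting.

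I expect the only delicate point to be the clean verification that $\cdot m$ on $L$ induces $\cdot m^{t}$ on $\mathrm{H}_t(L,\mathbb{Z})$, which rests on the exterior-algebra description $\mathrm{H}_{\ast}(L,\mathbb{Z})\cong\Lambda^{\ast}_{\mathbb{Z}}(L)$ and the fact that the induced map is the algebra endomorphism extending multiplication by $m$ in degree one; the remaining steps are bookkeeping with naturality of the spectral sequence. One should also make sure that morphism-of-extensions functoriality of the Lyndon--Hochschild--Serre spectral sequence is the correct tool here; it is classical, and alternatively the map can be built directly on the $\mathscr{T}$/$-^A$ Grothendieck set-up of Section \ref{sec: cohom}.
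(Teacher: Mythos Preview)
The paper does not supply a proof of this lemma; it is quoted as a result of Liebermann via \cite{Sah}. Your argument is correct and is precisely the classical one appearing in that reference: the $H$-equivariant endomorphism $\cdot m$ of $L$ extends to a morphism of the split extension inducing multiplication by $m^{q}$ on the $q$-th row of every page, and naturality of $d_r$ then forces $m^{t-r+1}(m^{r-1}-1)$ to kill the image.
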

Using this fact, Sah proves the following result about the order of the characteristic classes $v_r^t$ associated to (\ref{eq: splitgroupextensionlattice}).
\begin{proposition}\label{lemma: sah}{\normalfont(Sah,\cite{Sah})} Let $t\geq r\geq2$, then the order of $v_r^t$ is a divisor of $B_r^t$, where $B_r^t=2$ when $r$ is even and  $B_r^t=\prod p^{\lambda_r^t(p)}$ when $r$ is odd, where $p$ ranges over primes such that $(p-1)| (r-1)$ and $\lambda_r^t(p)=\min(t-r+1,\varepsilon(p)+\mathrm{ord}_p(\frac{r-1}{p-1})$), where $\varepsilon(p)=1$ for odd $p$ and $\varepsilon(2)=2$.
\end{proposition}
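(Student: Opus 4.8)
The plan is to read off the order bound directly from the lemma of Liebermann above and then reduce the statement to an elementary computation of a greatest common divisor of integers. Recall that when it is defined, $v_r^t=d_r^{0,t}([id^t])$ lies in $E_r^{r,t-r+1}(\mathrm{H}_t(L,\mathbb{Z}))$, and that the coefficient module $\mathrm{H}_t(L,\mathbb{Z})\cong \Lambda^t L$ is finite dimensional and $\mathbb{Z}$-free. Since $t\geq r\geq 2$, Liebermann's lemma applies to the Lyndon--Hochschild--Serre spectral sequence with these coefficients and says that the image of $d_r^{0,t}$, which contains $v_r^t$, is annihilated by $m^{t-r+1}(m^{r-1}-1)$ for every $m\in\mathbb{Z}$. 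By B\'ezout's identity the order of $v_r^t$ then divides
\[ N_r^t:=\gcd_{m\in\mathbb{Z}}\ m^{t-r+1}\bigl(m^{r-1}-1\bigr), \]
so it suffices to show $N_r^t\mid B_r^t$; in fact I expect $N_r^t=B_r^t$, and would aim to prove that.

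The bulk of the work is then computing $v_p(N_r^t)$ for each prime $p$, i.e. $\min_m v_p\bigl(m^a(m^d-1)\bigr)$ with $a=t-r+1\geq 1$ and $d=r-1$. For $m$ divisible by $p$ this equals $a\,v_p(m)$, with minimum $a$ at $m=p$; for $m$ coprime to $p$ it equals $v_p(m^d-1)$, which vanishes for a suitable $m$ unless $p-1\mid d$ (Fermat's little theorem). When $p-1\mid d$ and $p$ is odd I would invoke the lifting-the-exponent lemma (after replacing a unit $m$ by $m^{\mathrm{ord}_p(m)}$, which is $\equiv 1\bmod p$): it gives $v_p(m^d-1)\geq 1+v_p(d)$ for all units $m$, with equality at $m=1+p$. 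Since $p-1$ is coprime to $p$ we have $v_p(d)=\mathrm{ord}_p(\tfrac{r-1}{p-1})$, so the minimum over all $m$ is $\min\bigl(a,\,1+\mathrm{ord}_p(\tfrac{r-1}{p-1})\bigr)=\lambda_r^t(p)$, with $\varepsilon(p)=1$. For $p=2$ one runs the same argument with the $p=2$ form of the lemma: if $d$ is odd (i.e. $r$ even) then $v_2(m^d-1)=v_2(m-1)$ has minimum $1$, and since every odd prime $p$ then satisfies $p-1\nmid r-1$ one gets $N_r^t=2=B_r^t$; if $d$ is even (i.e. $r$ odd) then $v_2(m^d-1)=v_2(m-1)+v_2(m+1)+v_2(d)-1$, and since $v_2(m-1)+v_2(m+1)\geq 3$ for odd $m$ (one of $m\pm1$ is exactly divisible by $2$), this has minimum $2+v_2(d)$, giving $\min(a,\,2+\mathrm{ord}_2(r-1))=\lambda_r^t(2)$ with $\varepsilon(2)=2$. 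Comparing exponents prime by prime yields $N_r^t=B_r^t$.

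The step I expect to be the main obstacle is the prime $2$: one must use the correct, asymmetric $p=2$ version of the lifting-the-exponent lemma (the extra $v_2(m+1)$ term, which reflects the non-cyclicity of $(\mathbb{Z}/2^j)^{\ast}$) in order to land on the exponent $\varepsilon(2)=2$ rather than $1$, and one must separately notice that when $r$ is even no odd prime contributes at all, which is precisely what collapses $B_r^t$ to $2$ in that case. Everything else reduces to routine bookkeeping of $p$-adic valuations.
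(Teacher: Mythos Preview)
Your approach is correct and complete. Note, however, that the paper does not give its own proof of this proposition: it is quoted from Sah's paper \cite{Sah}, with only the sentence ``Using this fact, Sah proves the following result'' indicating that the argument proceeds from Liebermann's lemma. Your proposal reconstructs precisely that derivation: $v_r^t$ lies in the image of $d_r^{0,t}$ with $\mathbb{Z}$-free finite-rank coefficients $\Lambda^t L$, so Liebermann's lemma gives that every $m^{t-r+1}(m^{r-1}-1)$ annihilates it, and the order divides the gcd of these integers. Your prime-by-prime computation of that gcd via lifting-the-exponent is correct, including the two subtleties you flag: the collapse to $2$ when $r$ is even (because $p-1$ is even for every odd prime, hence cannot divide the odd number $r-1$), and the extra $+1$ in the exponent at $p=2$ coming from the $v_2(m+1)$ term in the $2$-adic LTE formula. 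One small notational point: when you write ``replacing a unit $m$ by $m^{\mathrm{ord}_p(m)}$'' you mean the multiplicative order of $m$ modulo $p$, not the $p$-adic valuation (which the paper denotes $\mathrm{ord}_p$); this clash is worth avoiding. Otherwise there is nothing to add---you have supplied the argument the paper omits.
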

In what follows, we intend to improve this result.
\begin{definition} \rm Let $t\geq r\geq2$. If $p$ is a prime such that $p-1$ divides $r-1$, then we define
 \[ \xi_r(p)= \left\{ \begin{array}{ccc}
                    \lambda_r^t(p) & \mbox{if} &   \mbox{$r$ is odd} \\
                    1 & &  \mbox{otherwise.}
                   \end{array}\right.  \]

Now, suppose that (\ref{eq: splitgroupextensionlattice}) is $(k,r)$-trivial for all $2 \leq k\leq t$. We define the following numbers iteratively from $k=r$ to $k=t$
\begin{eqnarray*}
\chi^r_r &=& \left\{ \begin{array}{ccc}
                    p & \mbox{if} &   v_r^r\neq 0 \ \mbox{and} \ r=p^n \ \mbox{for some prime $p$ with  $(p-1)| (r-1)$} \\
                    1 &  &  \mbox{otherwise}
                   \end{array}\right. \\
\chi^k_r &=& \left\{ \begin{array}{ccc}
                    p & \mbox{if} &   v_r^k\neq 0 \ \mbox{and} \ k=p^n \ \mbox{for some prime $p$ with  $(p-1)| (r-1)$} \\
                    & & \mbox{and $\mathrm{ord}_p(\prod_{i=r}^{k-1} \chi_r^i)< \xi_r(p)$} \\
                    1 &  &  \mbox{otherwise.}
                   \end{array}\right.
\end{eqnarray*}

\end{definition}
We will use the following well-known property of the binomial coefficients, which can be seen as a consequence of Lucas' Lemma.
\begin{lemma} \label{lemma: lucas}Let $k\geq2$ be an integer, then
\[ \gcd\Big( {k \choose i} \ | \ i \in \{ 1, \ldots, k-1 \} \Big) = \left\{ \begin{array}{ccc}
                                                                              p  \ & \mbox{if $k=p^n$ for some prime $p$ and some $n \in \mathbb{N}$} \\
                                                                               1   & \mbox{otherwise.}
                                                                            \end{array}\right.  \]
\end{lemma}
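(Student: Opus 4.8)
The plan is to reduce everything to a computation of the $p$-adic valuation $v_p\binom{k}{i}$ for each prime $p$, for which I would invoke Kummer's theorem -- the refinement of Lucas' Lemma stating that $v_p\binom{k}{i}$ equals the number of carries occurring when $i$ and $k-i$ are added in base $p$. Throughout, write $g = \gcd\bigl(\binom{k}{i} \mid 1 \le i \le k-1\bigr)$, and observe at the outset that $g \mid \binom{k}{1} = k$.

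First I would treat the case $k = p^{n}$ with $p$ prime and $n \ge 1$. Here $i + (k-i) = p^{n}$, whose base-$p$ expansion is a single digit $1$ in place $n$; since $i$ and $p^{n}-i$ each lie strictly below $p^{n}$, their base-$p$ digits occupy only places $0,\dots,n-1$, so the addition must involve at least one carry in order to produce the digit in place $n$. Hence $p \mid \binom{k}{i}$ for every $1 \le i \le k-1$, and so $p \mid g$. (One can avoid Kummer for this divisibility step and argue directly from $i\binom{p^{n}}{i} = p^{n}\binom{p^{n}-1}{i-1}$ together with $v_p(i) \le n-1$.) For the sharp statement I would then exhibit a single index where the valuation is exactly $1$: taking $i = p^{n-1}$, the addition $p^{n-1} + (p-1)p^{n-1} = p^{n}$ has its unique carry in place $n-1$, so $v_p\binom{k}{p^{n-1}} = 1$. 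Since $g \mid k = p^{n}$, $g$ is a power of $p$, and the two facts together force $g = p$.

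Next I would treat the case where $k$ is not a prime power, aiming to show $g = 1$ by proving that no prime divides $g$. Fix a prime $p$ and write $k = \sum_{j=0}^{s} a_{j} p^{j}$ in base $p$ with $a_{s} \neq 0$. By Lucas' Lemma the indices $i \in \{0,\dots,k\}$ with $p \nmid \binom{k}{i}$ are exactly those whose base-$p$ digits are termwise at most those of $k$, and their number is $\prod_{j=0}^{s}(a_{j}+1)$. Because $k$ is not of the form $p^{j}$, the digit sequence of $k$ is not a lone $1$, so this product is at least $3$; hence besides $i=0$ and $i=k$ there is at least one further such index, which necessarily lies in $\{1,\dots,k-1\}$. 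Thus $p \nmid g$, and since $p$ was arbitrary, $g = 1$.

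I do not expect any serious obstacle here: the argument is elementary once Kummer's theorem is allowed. The only point needing care is that Lucas' Lemma on its own only detects divisibility by $p$ and therefore cannot by itself rule out $g$ being a proper power of $p$ in the prime-power case; this is precisely why the valuation count at the single index $i = p^{n-1}$ (equivalently, a direct analysis of $\binom{p^{n}}{p^{n-1}}$) has to be inserted, in order to pin $g$ down to exactly $p$ rather than merely $p \mid g$.
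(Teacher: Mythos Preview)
Your argument is correct. The paper itself does not give a proof of this lemma at all: it merely introduces it as ``well-known'' and remarks that it ``can be seen as a consequence of Lucas' Lemma,'' then proceeds to use it. So there is no proof in the paper to compare yours against; your write-up actually supplies the details the authors omit.

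One small comment on presentation: your appeal to Kummer's theorem in the prime-power case is slightly stronger than what the paper's phrasing suggests is needed, since Lucas' Lemma alone only controls divisibility by $p$ and, as you correctly observe, cannot by itself pin down $v_p(g)=1$. Your choice to compute $v_p\binom{p^n}{p^{n-1}}=1$ via the single carry is the clean way to close that gap; alternatively one can avoid Kummer entirely and get the same conclusion from the identity $\binom{p^n}{p^{n-1}} = \frac{p^n}{p^{n-1}}\binom{p^n-1}{p^{n-1}-1}$ together with Lucas applied to $\binom{p^n-1}{p^{n-1}-1}$ (all base-$p$ digits of $p^n-1$ equal $p-1$, so this is coprime to $p$). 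Either route is fine.
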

\begin{theorem}\label{th: prime order char} Let $t\geq r\geq2$. Suppose that (\ref{eq: splitgroupextensionlattice}) is $(k,r)$-trivial for all $2 \leq k\leq t$.
Then, the order of $v_r^t$ is a divisor of $\prod_{k=r}^t \chi_r^k$.
\end{theorem}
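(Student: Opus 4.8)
The plan is to prove the divisibility $\mathrm{ord}(v_r^t)\mid D_t$, where $D_t:=\prod_{k=r}^{t}\chi_r^k$, by induction on $t\ge r$, the base case $t=r$ occurring as the degenerate instance of the inductive step (with $D_{r-1}$ the empty product). Write $\mathcal{E}$ for the extension (\ref{eq: splitgroupextensionlattice}) and $(E_\ast(M),d_\ast)$ for its Lyndon--Hochschild--Serre spectral sequence. The starting point is that $\mathrm{H}_\ast(L,\mathbb{Z})=\Lambda^{\bullet}L$ is the exterior Hopf algebra on $L=\mathrm{H}_1(L,\mathbb{Z})$; hence for each splitting $i+j=t$ there are $H$-equivariant maps, the coproduct component $\Delta_{i,j}\colon \mathrm{H}_t(L,\mathbb{Z})\to \mathrm{H}_i(L,\mathbb{Z})\otimes_{\mathbb{Z}}\mathrm{H}_j(L,\mathbb{Z})$ and the product (wedge) $\nabla_{i,j}$ in the other direction, and since $\Lambda^{\bullet}L$ is primitively generated in degree one, $\nabla_{i,j}\circ\Delta_{i,j}={t\choose i}\,\mathrm{id}$. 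Using the hypothesis that $\mathcal{E}$ is $(k,r)$-trivial for all $2\le k\le t$ together with the fact (shown in Section~\ref{sec: cohom}) that $d_p^{\ast,p-1}=0$, one checks that $[id^k]$ survives to the $E_r$-page for every $0\le k\le t$, that $v_r^k$ is defined for those $k$, and that $v_r^k=0$ whenever $k<r$.

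Next I would establish the key identity, working entirely inside $E_\ast(\mathrm{H}_\ast(L,\mathbb{Z}))$. For $i+j=t$ let $\star$ be the pairing of spectral sequences coming from the cup product together with the $H$-pairing $\nabla_{i,j}$. Computing on the $E_2$-page and using the compatibility of the cup product with the Universal Coefficient isomorphisms $\mathrm{H}^n(L,M)\cong\mathrm{Hom}_{\mathbb{Z}}(\mathrm{H}_n(L,\mathbb{Z}),M)$ — under which the cup product is dual to the coproduct of $\mathrm{H}_\ast(L,\mathbb{Z})$ — one finds that $[id^i]\star[id^j]$ is represented by $\nabla_{i,j}\circ\Delta_{i,j}={t\choose i}\,\mathrm{id}$, that is, $[id^i]\star[id^j]={t\choose i}\,[id^t]$ in $E_r^{0,t}(\mathrm{H}_t(L,\mathbb{Z}))$. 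Applying $d_r^{0,t}$ and the Leibniz formula for $\star$ then gives
\[ {t\choose i}\,v_r^t \;=\; v_r^i\star[id^j]\;+\;(-1)^i\,[id^i]\star v_r^j \qquad \text{in } E_r^{r,t-r+1}(\mathrm{H}_t(L,\mathbb{Z})). \]

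For the inductive step, assume $\mathrm{ord}(v_r^k)\mid D_{t-1}$ for all $0\le k\le t-1$; this holds for $k<r$ since $v_r^k=0$ there, and for $r\le k\le t-1$ since then $\mathrm{ord}(v_r^k)\mid D_k\mid D_{t-1}$ by the induction hypothesis. Then for each $1\le i\le t-1$ the two summands on the right of the identity are annihilated by $D_{t-1}$ (because $v_r^i$ and $v_r^{t-i}$ are), so $D_{t-1}{t\choose i}v_r^t=0$; taking the gcd of the ${t\choose i}$ over $1\le i\le t-1$ and invoking Lemma~\ref{lemma: lucas} yields $D_{t-1}\cdot g\cdot v_r^t=0$, with $g=1$ if $t$ is not a prime power and $g=p_0$ if $t=p_0^{n}$. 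It then remains to pass from $D_{t-1}g$ to $D_t=D_{t-1}\chi_r^t$. For primes $q\ne p_0$ nothing is lost since $g$ and $\chi_r^t$ are both powers of $p_0$, so one need only control the $p_0$-part when $t=p_0^n$, and here one runs through the definition of $\chi_r^t$: if $v_r^t=0$ there is nothing to do; if $(p_0-1)\nmid(r-1)$ then Proposition~\ref{lemma: sah} already forces $p_0\nmid\mathrm{ord}(v_r^t)$ and $\chi_r^t=1$ suffices; if $(p_0-1)\mid(r-1)$ but the accumulated capacity $\mathrm{ord}_{p_0}(D_{t-1})$ has already reached $\xi_r(p_0)$, then $\chi_r^t=1$ and Proposition~\ref{lemma: sah} bounds the $p_0$-part of $\mathrm{ord}(v_r^t)$ by $p_0^{\xi_r(p_0)}$, which divides $D_{t-1}$; and in the remaining case $\chi_r^t=p_0$, so $D_t=D_{t-1}g$ and we are done. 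This closes the induction.

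The step I expect to be the crux is the identity $[id^i]\star[id^j]={t\choose i}[id^t]$: it requires carefully matching the cup product on $\mathrm{H}^\ast(L,-)$, the Universal Coefficient identifications, and the Hopf structure of $\mathrm{H}_\ast(L,\mathbb{Z})$, and then verifying that a computation performed on $E_2$ persists to $E_r$ (which is where the survival of each $[id^k]$ is used). The concluding arithmetic — in particular checking that the $p_0$-adic valuation of Sah's bound $B_r^t$ equals $\xi_r(p_0)$ for $r$ both odd and even — is routine but must be carried out attentively.
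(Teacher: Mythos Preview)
Your proof is correct and follows essentially the same route as the paper: induction on $t\ge r$, the key identity $[id^i]\cdot[id^j]=\binom{t}{i}[id^t]$ coming from the exterior Hopf structure on $\mathrm{H}_\ast(L,\mathbb{Z})$, then Leibniz together with Lemma~\ref{lemma: lucas} and Proposition~\ref{lemma: sah}. The paper compresses your final case analysis on $\chi_r^t$ into a single sentence and treats $t=r$ separately rather than as the empty-product instance, but the argument is otherwise identical.
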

\begin{proof} Since $\mathrm{H}_i(L,\mathbb{Z})=\Lambda^i(L)$, we have an $H$-pairing
\[\mathrm{H}_i(L,\mathbb{Z})\otimes_{\mathbb{Z}} \mathrm{H}_j(L,\mathbb{Z}) \rightarrow  \mathrm{H}_{i+j}(L,\mathbb{Z}) \]
for all $i,j\geq 0$. We will use the multiplicative structure of the Lyndon-Hochschild-Serre spectral sequence associated to (\ref{eq: splitgroupextensionlattice}) induced by this pairing to prove the theorem by induction on $t\geq r$.

First, suppose that $t=r$. If $v_r^r=0$ then we are done, otherwise take $[id^i] \in \mathrm{H}^0(H,\mathrm{H}^i(L,\mathrm{H}_i(L,\mathbb{Z})))$ and $[id^j] \in \mathrm{H}^0(H,\mathrm{H}^j(L,\mathrm{H}_j(L,\mathbb{Z})))$ for all $i,j\geq 1$ such that $i+j=r$. Then one can check that $[id^i]\cdot[id^j] = {r \choose i}[id^r]$. Applying the differential $d_r^{0,r}$ and using the product rule, we find
\[ {r \choose i}v_r^r= {r \choose i}d_r^{0,r}([id^r])= d_r^{0,i}([id^i])\cdot [id^j] + (-1)^i[id^i]\cdot d_r^{0,j}([id^j]). \]
Since $d_r^{p,q}=0$ for all $q<r$, we see that  ${r \choose i}v_r^r=0$, and this is for all $i \in \{1, \ldots, r-1\}$. Hence, Proposition \ref{lemma: sah} and Lemma \ref{lemma: lucas} imply that $\chi^r_r v_r^r=0$.

Now, assume that $v_r^{s}$ has order dividing  $\prod_{k=r}^{s} \chi_r^k$ for all $s \in \{r,r+1,\ldots,t-1\}$.  If $v_r^t=0$, then we are done, otherwise using again the pairing and applying the product rule, we  obtain
\begin{eqnarray*}  {t \choose i}v_r^t & = & d_r^{0,i}([id^i])\cdot [id^j] + (-1)^i[id^i]\cdot d_r^{0,j}([id^j]) \\
& = & v_r^i \cdot [id^j] + (-1)^i[id^i] \cdot [v_r^j]
\end{eqnarray*}
for all  $i,j\geq 1$ such that $i+j=t$. It now follows that
$\prod_{k=r}^{t-1} \chi_r^k {t \choose i} v_r^t = 0$
for all  $i \in \{1, \ldots, t-1\}$. Since \[\gcd\Big( \prod_{k=r}^{t-1} \chi_r^k{t \choose i} \ | \ i \in \{ 1, \ldots, t-1 \} \Big)= \prod_{k=r}^{t-1} \chi_r^k\gcd\Big( {t \choose i} \ | \ i \in \{ 1, \ldots, t-1 \} \Big),\] Proposition \ref{lemma: sah} and Lemma \ref{lemma: lucas} imply that $v_r^t$ has order dividing $\prod_{k=r}^{t} \chi_r^k$.
\end{proof}
\begin{remark} \rm Note that $\prod_{k=r}^{t} \chi_r^k$ is a divisor of $B_r^t$.
\end{remark}
\begin{corollary}If $v_r^{p^n}=0$ for all primes $p$ with $(p-1)| (r-1)$, for all $n \in \mathbb{N}_0$ and all $r\geq 2$. Then the Lyndon-Hochschild-Serre spectral sequence associated to (\ref{eq: splitgroupextensionlattice}) collapses at the second page for all $H$-modules $M$.
\end{corollary}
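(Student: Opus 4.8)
The plan is to prove that \emph{every} characteristic class $v_r^t$ of (\ref{eq: splitgroupextensionlattice}) is defined and equal to zero; collapse at $E_2$ for all $H$-modules $M$ then follows from the corollary immediately following Theorem~\ref{th: sahgroup}, since $L\cong\mathbb{Z}^n$ has $\mathbb{Z}$-free integral homology ($\mathrm{H}_i(L,\mathbb{Z})\cong\Lambda^i(L)$), so that corollary applies and, once all edge differentials $d_m^{0,t}$ vanish, the spectral sequence degenerates.

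The vanishing of all $v_r^t$ I would establish by a double induction. The outer induction is on $r\geq2$, with hypothesis ``$v_{r'}^t=0$ for all $r'<r$ and all $t\geq0$''; the base case $r=2$ is vacuous. Granting this, the extension is $(t,r)$-trivial for every $t$: indeed $(t,r)$-triviality is equivalent to the vanishing of $v_2^t,\dots,v_{r-1}^t$ (iterate Theorem~\ref{th: sahgroup}(b); cf.\ the corollaries after Theorem~\ref{th: sah}), and there is no additional $\mathbb{Z}$-freeness condition to verify because $\mathrm{H}_{t-1}(L,\mathbb{Z})=\Lambda^{t-1}(L)$ is free. In particular each $v_r^t$ is now defined, and it remains to show $v_r^t=0$, which I do by an inner induction on $t$.

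For $t<r$ the differential $d_r^{\ast,t}$ with source $E_r^{\ast,t}(\mathrm{H}_t(L,\mathbb{Z}))$ is zero: when $t<r-1$ the target $E_r^{r,\,t-r+1}$ already vanishes, and $d_r^{\ast,r-1}=0$ for split short exact sequences (equivalently, the parenthetical case of Liebermann's lemma); hence $v_r^t=0$. For $t\geq r$, the extension is $(k,r)$-trivial for all $2\leq k\leq t$, so Theorem~\ref{th: prime order char} gives that the order of $v_r^t$ divides $\prod_{k=r}^{t}\chi_r^k$. By the inner induction hypothesis $v_r^k=0$ for $r\leq k\leq t-1$, and a glance at the definition shows $\chi_r^k\neq1$ can occur only when $v_r^k\neq0$; therefore $\chi_r^k=1$ for $r\leq k\leq t-1$ and the order of $v_r^t$ divides $\chi_r^t$. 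If $\chi_r^t=1$ then $v_r^t=0$ and we are done. Otherwise $\chi_r^t=p$ for a prime $p$ with $(p-1)\mid(r-1)$ and, again by the definition of $\chi_r^t$, $t=p^n$ for some $n$; but then the standing hypothesis of the corollary yields $v_r^{p^n}=v_r^t=0$. This closes both inductions.

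Thus $v_r^t=0$ for all $r\geq2$ and $t\geq0$. Applying Theorem~\ref{th: sahgroup}(b) with $s=0$ and $M=\mathrm{H}_t(L,\mathbb{Z})$ shows every edge differential $d_m^{0,t}$ is zero, and then the corollary after Theorem~\ref{th: sahgroup} gives that the Lyndon--Hochschild--Serre spectral sequence of (\ref{eq: splitgroupextensionlattice}) collapses at the second page for all $M\in H\mbox{-mod}$. (The converse is immediate, since collapse at $E_2$ kills all differentials, so one in fact obtains an ``if and only if''.) I expect the only delicate point to be the bookkeeping of the two interlocking inductions --- in particular that $(t,r)$-triviality, needed even to \emph{define} $v_r^t$, is supplied by the vanishing of the earlier classes --- together with the key observation that a nontrivial factor $\chi_r^t$ forces $t$ to be a prime power $p^n$ with $(p-1)\mid(r-1)$, which is precisely the situation the hypothesis controls; beyond this I foresee no real obstacle.
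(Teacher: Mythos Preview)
Your proof is correct and follows essentially the same route as the paper: an induction on $r$, using the outer hypothesis to secure $(t,r)$-triviality for all $t$, then invoking Theorem~\ref{th: prime order char} to bound the order of $v_r^t$ by $\prod_{k=r}^t\chi_r^k$, and finally observing that the hypothesis forces each $\chi_r^k=1$. The only difference is that your inner induction on $t$ is unnecessary: once $(k,r)$-triviality holds for all $k$, you can argue directly that $\chi_r^k\neq 1$ would force $k=p^n$ with $(p-1)\mid(r-1)$ \emph{and} $v_r^k\neq 0$, contradicting the standing hypothesis---so every $\chi_r^k=1$ without appealing to the vanishing of earlier $v_r^k$.
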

\begin{proof} We will use induction on $r$ to show that all differentials $d_r$ for $r\geq 2$ are zero for all coefficients in $H$-mod. Suppose $r=2$. By assumption, we have $\chi_2^t=1$ for all relevant $t$. So, Theorem \ref{th: prime order char} implies that $v_2^t=0$ for all $t$. It then follows from Theorem \ref{th: sahgroup}(b) that $d_2^{s,t}=0$ for all $s,t$ and all coefficients in $H$-mod. Now, assume $d_{k}^{s,t}=0$ for all $k \in \{2,\ldots r-1\}$, all $s,t$ and all coefficients in $H$-mod. In particular, (\ref{eq: splitgroupextensionlattice}) is $(t,r)$-trivial for all $t$. It again follows that $\chi_r^t=1$ for all relevant $t$. Therefore, Theorem \ref{th: prime order char} shows that $v_r^t=0$ for all $t$. Using Theorem \ref{th: sahgroup}(b), we conclude $d_r^{s,t}=0$ for all $s,t\geq 0$ and all coefficients in $H$-mod.
\end{proof}
Next, we show how the indexes of subgroups of $H$ can be useful in determining the order of the characteristic classes of (\ref{eq: splitgroupextensionlattice}).
\begin{corollary} \label{th: subgroups}Let $t\geq r\geq2$ and suppose the extension (\ref{eq: splitgroupextensionlattice}) is $(t,r)$-trivial. If $K$ is a  subgroup of $H$, then the extension restricted to $K$ is also $(t,r)$-trivial. Denoting its characteristic class by $w_r^t$ and assuming $[H:K]<\infty$, we have that $v_r^t$ has order dividing $[H:K]\mathrm{ord}(w_r^t)$.
\end{corollary}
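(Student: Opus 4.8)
\emph{Proof proposal.} The plan is to handle the two assertions separately: $(t,r)$-triviality of the restricted extension by iterating Theorem~\ref{th: sahgroup}(d) along the inclusion $\iota\colon K\hookrightarrow H$, and the order bound by playing the restriction map against the transfer (corestriction), which is available because $L\rtimes_{\varphi|_K}K$ sits inside $L\rtimes_\varphi H$ with finite index $[H:K]$.

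First I would note $\varphi\circ\iota=\varphi|_K$, so that Theorem~\ref{th: sahgroup}(d) applies with $\sigma=\iota$, and that $\mathrm{H}_{t-1}(L,\mathbb{Z})\cong\Lambda^{t-1}(L)$ is $\mathbb{Z}$-free, so the only substantive point in the definition of $(t,r)$-triviality for the restricted extension is that the associated split short exact sequence of group rings is $(t,r)$-trivial. By Theorem~\ref{th: sahgroup}(b) and the definition $w_m^t=d_m^{0,t}([id^t])$, being $(t,m{+}1)$-trivial is equivalent to $w_m^t$ being defined and zero, so I would induct on $m$, starting from the vacuous case $m=2$. Assume the restricted extension is $(t,m)$-trivial for some $2\le m\le r-1$; since the given extension is $(t,r)$-trivial it is a fortiori $(t,m)$-trivial, so $v_m^t$ and $w_m^t$ are both defined, and Theorem~\ref{th: sahgroup}(d) (with $r$ replaced by $m$) says the map on spectral sequences induced by $\iota$ sends $v_m^t$ to $w_m^t$. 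But $(t,r)$-triviality of the given extension forces $v_m^t=0$, hence $w_m^t=0$, and the restricted extension is $(t,m{+}1)$-trivial. After $m=r-1$ the restricted extension is $(t,r)$-trivial, and one final application of Theorem~\ref{th: sahgroup}(d) shows that the restriction map $\mathrm{res}$ on the spectral sequences sends $v_r^t$ to $w_r^t$.

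For the order estimate, assume $[H:K]<\infty$. Then $L\rtimes_{\varphi|_K}K$ has index $[H:K]$ in $L\rtimes_\varphi H$, so there is a transfer map on cohomology which preserves the canonical filtration defining the Lyndon--Hochschild--Serre spectral sequences (because the common normal subgroup $L$ lies in both groups and $[L\rtimes H:L\rtimes K]=[H:K]$); this gives a morphism of spectral sequences $\mathrm{cor}\colon {}^{K}E_{\ast}^{p,q}(M)\to E_{\ast}^{p,q}(M)$ inducing on $E_2$ the usual transfer $\mathrm{H}^p(K,\mathrm{H}^q(L,M))\to\mathrm{H}^p(H,\mathrm{H}^q(L,M))$ and satisfying $\mathrm{cor}\circ\mathrm{res}=[H:K]\cdot\mathrm{id}$ on every page. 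Taking $M=\mathrm{H}_t(L,\mathbb{Z})$, we get $\mathrm{cor}(w_r^t)=\mathrm{cor}(\mathrm{res}(v_r^t))=[H:K]\,v_r^t$. Since $t\ge r\ge2$, $w_r^t$ has finite order (Proposition~\ref{lemma: sah}), and applying $\mathrm{cor}$ to $\mathrm{ord}(w_r^t)\,w_r^t=0$ yields $\mathrm{ord}(w_r^t)\,[H:K]\,v_r^t=0$; hence $\mathrm{ord}(v_r^t)$ divides $[H:K]\,\mathrm{ord}(w_r^t)$.

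The step I expect to be the main obstacle is the compatibility of the transfer with the Lyndon--Hochschild--Serre spectral sequence: one must verify that the transfer for the finite-index subgroup $L\rtimes_{\varphi|_K}K\le L\rtimes_\varphi H$ respects the filtration on cohomology, so that it induces a morphism of spectral sequences with the asserted effect on $E_2$ and with $\mathrm{cor}\circ\mathrm{res}$ equal to multiplication by the index on each page. This is a standard but slightly delicate point; once it is in hand, everything else is a formal consequence of Theorem~\ref{th: sahgroup}.
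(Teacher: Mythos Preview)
Your argument is correct and matches the paper's proof: both establish $(t,r)$-triviality of the restricted extension by iterating Theorem~\ref{th: sahgroup}(d) along the inclusion $K\hookrightarrow H$, and both deduce the order bound from $\mathrm{tr}^{\ast}\circ i^{\ast}=[H:K]\,\mathrm{id}$ applied to $v_r^t$. The paper is terser---it simply writes the restriction and transfer as maps on $\mathrm{H}^r(H,\mathrm{H}^{t-r+1}(L,\mathrm{H}_t(L,\mathbb{Z})))$ and $\mathrm{H}^r(K,\ldots)$ without spelling out why this is meaningful for classes that a priori live on the $E_r$-page---so the compatibility you flag as the ``main obstacle'' is precisely the point the paper leaves implicit; your formulation via a transfer morphism of spectral sequences is the clean way to make that step rigorous.
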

\begin{proof} We have the following commutative diagram
\[ \xymatrix{ 0 \ar[r] & L \ar[r] \ar[d]^{id}  & L \rtimes_{\varphi'} K \ar[r] \ar[d] & K \ar[r]\ar[d]^{i} & 0 \\
0 \ar[r] & L \ar[r]  & L\rtimes_{\varphi} H   \ar[r] & H \ar[r] & 0 ,} \]
where $i$ is the inclusion of $K$ into $H$, and $\varphi'=\varphi \circ i$. The fact that the extension in the top row is also $(t,r)$-trivial follows directly from Theorem \ref{th: sahgroup}(d). It also shows that $i^{\ast}(v_r^t)=w_r^t$, where
\[ i^{\ast}: \mathrm{H}^r(H,\mathrm{H}^{t-r+1}(L,\mathrm{H}_t(L,\mathbb{Z}))) \rightarrow \mathrm{H}^r(K,\mathrm{H}^{t-r+1}(L,\mathrm{H}_t(L,\mathbb{Z}))) \]
is the restriction map induced by $i$. If $K$ is a finite index subgroup of $H$, we also have a transfer map
\[  \mathrm{tr}^{\ast}: \mathrm{H}^r(K,\mathrm{H}^{t-r+1}(L,\mathrm{H}_t(L,\mathbb{Z}))) \rightarrow \mathrm{H}^r(H,\mathrm{H}^{t-r+1}(L,\mathrm{H}_t(L,\mathbb{Z}))), \]
with the property that $\mathrm{tr}^{\ast} \circ i^{\ast} = [H:K]id. $ This gives $[H:K]v_r^t= \mathrm{tr}^{\ast}(w_r^t)$ which implies that $[H:K]\mathrm{ord}(w_r^t)v_r^t=0$.
\end{proof}

\end{document}